\providecommand\@dotsep{5}
 \def\a{\alpha}
 \def\be{\beta}
 \def\hbe{{\widehat{\beta}}}
 \def\de{\delta}
 \def\e{\epsilon}
 \def\ve{\varepsilon}
 \def\ga{\gamma}
 \def\Ga{\Gamma}
 \def\vr{\varphi}
 \def\la{\lambda}
 \def\La{\Lambda}
 \def\Si{\Sigma}
 \def\om{\omega}
 \def\th{\theta}
 \def\re{{\mathbb R}}
 \def\then{\Longrightarrow}
 \def\ov{\overline}
 \def\Z{{\mathbb Z}}
 \def\co{{\mathbb C}}
 \def\hf{{\widehat{f}}}
 \def\SS{{\mathbb S}}
 \def\T{{\mathbb T}}
 \def\hU{{\widehat{U}}}
 \def\tq1{{\tilde{q}_1}}
 \def \lV{\left\Vert}
 \def \rV{\right\Vert}
 \def \ov{\overline}
 \def \then{\Longrightarrow}
 \def\ted{\hfill$\triangle$}
 \DeclareMathOperator*{\tsum}{{\textstyle \sum}}
  \renewcommand{\proofname}{{\bf Proof:}}
 \theoremstyle{plain}
 \newtheorem{Thm}{Theorem}[section]
 \newtheorem{Lemma}[Thm]{\bf Lemma}
 \newtheorem{Corollary}[Thm]{\bf Corollary}
 \newtheorem{Theorem}[Thm]{\bf Theorem}
 \theoremstyle{definition}
 \theoremstyle{remark}
 \newtheorem{Remark}[Thm]{\bf Remark}
 \newtheoremstyle{Cl}
  {5pt}
  {3pt}
  {\sl}
  {}
  {\it}
  {:}
  {.5em}
  {}
 \theoremstyle{Cl}
 \newtheorem{Claim}[equation]{Claim}
 \def\begincproof{
                  \renewcommand{\proofname}{\it Proof:}
                  \begin{proof}
                 }
 \def\endcproof{
                \renewcommand{\qedsymbol}{$\diamondsuit$}
                \end{proof} 
                \renewcommand{\qedsymbol}{\openbox}
                \renewcommand{\proofname}{\bf Proof:}
               }
 \def\iitem{\refstepcounter{equation}\item}
 \title{No elliptic points from fixed prime ends} 
\author{Fernando Oliveira}
\address{Fernando Oliveira\newline\indent 
Universidade Federal de Minas Gerais\newline\indent
Av. Ant\^onio Carlos 6627, 
31270-901, Belo Horizonte,
MG, Brasil.}
\author{Gonzalo Contreras}  
\address{Gonzalo Contreras\newline\indent 
Centro de Investigaci\'on en Matem\'aticas\newline\indent 
A.P. 402, 36.000, Guanajuato, GTO, Mexico}
\thanks{Gonzalo Contreras was partially supported by CONACYT, Mexico, grant A1-S-10145.}
\subjclass[2020]{37E30, 37C29}
\begin{document}

\parskip +3pt

\begin{abstract}
We consider area preserving maps of surfaces and extend Mather's result
on the equality of the closure of the four branches of saddles.
He assumed elliptic fixed points to be Moser stable, while we require only that 
the derivative at this points to be a rotation by an angle different from zero.
There are many results in the literature which require the hypothesis 
that elliptic periodic points be Moser stable that now can be extended 
to the case that the derivative at these points be an irrational rotation. The key point 
is to give more information on Cartwright and Littlewood's fixed point theorem,
to show that the fixed point obtained by a fixed prime end can not be elliptic.
Hypotheses then became easier to verify: non degeneracy of fixed points and 
nonexistence of saddle connections. As an application we show that the result 
immediately implies that for the standard map family, for all values of the parameter,
except one, the principal hyperbolic fixed point has homoclinic points.
We also extend results to surfaces with boundary in order to be applicable
to return maps to surfaces of section and broken book decompositions.
\end{abstract}

\maketitle

\tableofcontents

\section{Introduction.}

  Let $S$ be a surface provided with a finite Borel measure $\mu$ 
  which is positive on non-empty open subsets. Let $f:S\to S$ be a homeomorphism.
  We say that $f$ is area preserving if $f_*\mu=\mu$.
  
   Let $p$ be a fixed point of $f$.
  We say that $p$ is of {\it saddle type} 
  if in a neighborhood $V$ of $p$ there exist continuous
  coordinates with $p$ at the origin and in which 
  $f(x,y)=(\la x,\la^{-1}y)$ with $|\la|>1$.
  The {\it stable} and {\it unstable invariant manifolds} of $p$
  are defined as:
  \begin{align*}
  W^s_p&=\{\,x\in S \,:\, \lim\nolimits_{n\to+\infty}f^n x = p\,\} \qquad \text{and}
  \\
  W^u_p&=\{\,x\in S\,:\,\lim\nolimits_{n\to-\infty}f^n x = p\,\} \quad \text{ respectively.}
  \end{align*}
  The sets $W^s_p$ and $W^u_p$ are injectively immersed connected curves.
  The components of $W^s_p-\{p\}$ and $W^u_p-\{p\}$ with respect 
  to the topology induced by a   parameterization are called {\it branches}.
  The components of $W^s_p\cap V$ and $W^u_p\cap V$ which contain $p$
  are called the {\it local invariant manifolds } of $p$ (with respect to $V$).
  The {\it local branches} of $p$ are the components of the complement of $p$
  in the local invariant manifolds. 
  
  The set $\{(x,y)\in V\,|\, x\ne 0\text{ and }y\ne 0\,\}$
  has four connected components that contain $p$ in their closures.
  We call them {\it sectors of $p$}.
  If $\Si$ is one of these sectors and $\Si'$ is a sector of $p$
   defined by means of another neighborhood $V'$ of $p$ then
   either $\Si\cap\Si'=\emptyset$
   or $\Si$ and $\Si'$ define the same germ at $p$.
   We say that the set $A$ contains a sector $\Si$ if $A$ contains a set $\Si'$
   germ equivalent to $\Si$ at $p$.
   We would like to say that a set $B$ accumulates on $p$
   with points coming through a sector.
   We say that a set $B$ {\it accumulates on a sector $\Si$ of $p$} 
   if the closure of $B\cap \Si$ contains $p$.
   These definitions do not depend on the choice of $V$
   neither on the choice of the linear map $(x,y)\mapsto (\la x,\la^{-1}y)$.
   We could have taken for the coordinates any 
   pair of expanding and contracting homeomorphisms 
   of $\re$ which fix $0$.
   We say that a branch $L$ and a sector $\Si$ are {\it adjacent}
   if a local branch of $L$ is contained in the closure of $\Si$ in $S$.
   Two branches are {\it adjacent} if they are adjacent to a single sector.
   
   By a {\it connection} we mean a branch which is contained in the 
   intersection of two invariant manifolds (possibly of two different fixed points).
   
   Let $L$ be an invariant branch and let $x\in L$.
   Denote by $D$ the closed arc from $x$ to $f(x)$ inside $L$.
   We call the set 
   $$
   \om(L):=\{y\in S\;|\;y=\lim_{n\to\infty}f^{n_i}(x_i)
   \text{ where $n_i\nearrow \infty$ and $x_i\in D$}\,\}
   $$
   the {\it limit set of $L$}. 
   The limit set $\om(L)$ is non empty, connected, compact, invariant and 
   the closure of $L$ in $S$ is the union  of $L$ and $\om(L)$.
   
   We say that a fixed point $p$ is {\it elliptic} if in a neighborhood $V$ of $p$ there exist
   continuous coordinates in which $f$ is differentiable at $p$ and $df_p$ is a rotation
   by an angle different from zero. If $p$ is of saddle type or elliptic we say that it is 
   {\it non degenerate}. We say that $p$ is {\it irrationally elliptic} if $df_p$ is a rotation
   by an angle $\theta$ where $\frac\theta{2\pi}$ is irrational.
   
   The fixed point $p$ is {\it Moser stable} if there is a 
   fundamental system of neighborhoods of $p$ made of disks whose
   frontiers are minimal invariant sets.
   
   If $p$ is a periodic point and $\tau=\inf\{\,n\ge 1\;|\;f^n(p)=p\,\}$ is its {\it period}, 
   we use $f^\tau$ to define these concepts for $p$.
   
   Now we would like to state a result of Mather \cite{Mat9}:
   
   \begin{Theorem}[Mather]\label{T1}
   \quad
   
   Let $S$ be a compact connected orientable surface provided with a finite measure $\mu$
   which is positive on open sets. Let $f:S\to S$ be an orientation preserving, area preserving  homeomorphism
   of $S$. Assume that all fixed points of $f$ are of saddle type or Moser stable and that $f$
   has no invariant connections.
   
   If $p$ is a fixed point of saddle type whose branches are invariant sets, then the four
   branches of $p$ have the same closure.
   
   \end{Theorem}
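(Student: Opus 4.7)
I would proceed by contradiction. Fix an invariant branch $L$ of $p$ and set $K:=\overline{L}=L\cup\om(L)$, a compact, connected, $f$-invariant set containing $p$. It suffices to prove that every branch $L'$ of $p$ lies in $K$, for then $\overline{L}=\overline{L'}$ by symmetry. Suppose, for a contradiction, $L'\not\subseteq K$ for some branch $L'$ of $p$. Then $p$ is a frontier point of some connected component $U$ of $S\setminus K$, and $U$ accumulates on $p$ through the sector adjacent to $L'$; in particular $p\in\partial U\subseteq K$.

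Since $\mu$ is a finite $f$-invariant measure, Poincar\'e recurrence gives some $N\ge 1$ with $f^N(U)=U$. Passing to a Carath\'eodory prime-end compactification of $U$, the homeomorphism $f^N|_U$ extends to an orientation-preserving homeomorphism of a compact disk which acts on its circle of prime ends. Combining the Cartwright--Littlewood fixed point theorem with the Brouwer translation theorem on this circle produces a fixed prime end $\mathfrak{e}$; its impression $I(\mathfrak{e})\subseteq\partial U\subseteq K$ is a nonempty, compact, connected, $f^N$-invariant set containing a fixed point of $f^N$. Coordinating the argument over the entire $f$-orbit of $U$ (or tracking $f$ directly on prime ends of the orbit) upgrades this to a genuine fixed point $q\in K$ of $f$, so the theorem's hypothesis applies to $q$.

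By hypothesis $q$ is either of saddle type or Moser stable. In the \emph{saddle} case, because $\mathfrak{e}$ is fixed, one local branch of $q$ must lie in $\overline{U}\subseteq K=\overline{L}$; this local branch therefore belongs to an invariant branch of the saddle $q$ contained in the closure of a branch of $p$, which is an invariant connection---contradicting the hypothesis. In the \emph{Moser-stable} case, choose a small Moser disk $D$ around $q$ with $p\notin\overline{D}$; its frontier $\partial D$ is a minimal $f$-invariant Jordan curve. The invariant closed set $\partial D\cap\partial U$ is either empty or all of $\partial D$ by minimality. In both alternatives, using that $K$ is connected and meets both $D$ (at $q$) and $S\setminus\overline{D}$ (at $p$), while $U$ is a connected component of $S\setminus K$ meeting $D$ near $q$, the Jordan curve $\partial D$ forces $U\subseteq D$ via the Jordan curve theorem, contradicting $p\in\overline{U}\setminus\overline{D}$.

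\textbf{Main obstacle.} The technical core is the prime-end step: obtaining a fixed prime end, extracting a fixed point $q$ of $f$ itself (and not merely of $f^N$) from its impression, and---in the saddle alternative---ensuring that a local branch of $q$ is genuinely trapped in $\overline{U}$ rather than escaping through a neighboring sector. This is exactly the region of Mather's original argument that the present paper refines: the main new Lemma, asserting that no fixed prime end can give rise to an elliptic fixed point, is what permits relaxing the Moser stability hypothesis to the much weaker assumption of irrational ellipticity.
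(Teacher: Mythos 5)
Your plan has the right flavor---prime ends, Cartwright--Littlewood, a dichotomy on the type of fixed point found on $fr_S U$---but the central step is obtained by an argument that does not work, and the connection conclusion in the saddle case is asserted rather than proved.

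The claim that ``Cartwright--Littlewood combined with the Brouwer translation theorem on the circle produces a fixed prime end'' is the key gap. An orientation-preserving homeomorphism of a circle need not have a fixed point (the Brouwer translation theorem concerns the plane, and Brouwer's fixed point theorem for the disk only guarantees a fixed point somewhere in the disk, not on the boundary circle of prime ends). Indeed the paper's own Theorem~\ref{T3}\eqref{3b} shows that boundary rotation numbers are \emph{generically irrational}, so a fixed prime end does not come for free. The fixed prime end must be \emph{constructed directly}: since $L'$ is $f$-invariant and $f$ is orientation-preserving, the sector $\Sigma$ of $p$ adjacent to $L'$ is $f$-invariant and contained in the invariant component $U$ of $S\setminus K$, and the chain $V_n=B^\circ_{1/n}\cap\Sigma$ is a prime chain whose class in $\hU$ is fixed by $\hf$ with principal set $\{p\}$. (This also shows your appeal to Poincar\'e recurrence is unnecessary: $U$ contains the $f$-invariant sector, so $U$ is itself $f$-invariant; and $\hU$ is a compact surface with boundary, not necessarily a disk.)

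The saddle case is also under-argued. ``One local branch of $q$ lies in $\overline{U}\subseteq K$'' does not by itself make that branch a connection: a connection is a branch lying in the intersection of a stable and an unstable manifold. The actual mechanism, which is Mather's and is reproduced in \S\ref{ss32} here, runs through the dynamics of $\hf$ on the Carath\'eodory circle $C(b)$: one finds an open arc $]e_1,e_2[$ of non-fixed prime ends bounded by fixed prime ends $e_1,e_2$, shows that $e\mapsto X(e)$ is a continuous, $\hf$-equivariant map on this arc whose image near $e_1$ is a local \emph{unstable} branch of the saddle $p_1=X(e_1)$ and near $e_2$ is a local \emph{stable} branch of $p_2=X(e_2)$, and concludes that the image of the whole arc is one and the same curve---hence a branch that is both stable and unstable, i.e.\ a connection. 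It is precisely \emph{here} that the hypothesis on fixed-point types is used: one must rule out the possibility that $X(e_1)$ (or $X(e_2)$) is Moser stable, which in Mather's setting is done by the Jordan-curve argument with a small Moser disk, and which the present paper replaces with the sharper lemma that a principal point of a fixed regular prime end cannot be elliptic. Your Moser-stable paragraph is in the right direction, but note the conclusion should be $U\subseteq S\setminus\overline{D}$ (not $U\subseteq D$), which then contradicts $q\in\partial U$.
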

   
   We would like to remark that  the above result is false without the assumption
   that the branches of the fixed point be invariant (see the example after
    theorem 5.2 in \cite{Mat9}). The proof is based on his extension of 
    the classical theory of prime ends to open connected subsets $U$ of $S$ 
    with finitely many ideal boundary points (or topological ends). 
    This is done by adding to $U$ a circle of prime ends corresponding
    to each non trivial ideal boundary point. Let $\hU$ be the prime ends 
    compactification of $U$. Then $\hU$ is a compact surface and if $U$
    is invariant then the restriction of $f$ to $U$ extends to a homeomorphism 
    $\hf:\hU\to\hU$. If $S$ is orientable and $f$ is orientation preserving then so 
    is $\hf$.

    Cartwright and Littlewood~\cite{CL} proved that 
    if there exists a prime end $e$ fixed by $\hf$, then every point in the principal set of $e$ 
    is fixed by $f$.
    
    If $A\subset B$, we use the notation $int_B(A)$, $cl_B(A)$ and $fr_B(A)$
    for the interior, closure and the frontier of $A$ in $B$, respectively.
    The boundary of a manifold $M$ will be denoted by $\partial M$.
    By a domain we mean a connected open subset of $S$.
    
    Our result is the following:

    \begin{Theorem}\quad\label{T2}
    
    Let $S$ be a compact connected orientable surface provided with a finite
    measure $\mu$ which is positive on open sets and let $f:S\to S$ be an orientation
    preserving, area preserving homeomorphism of $S$.
    \begin{enumerate}
    \iitem\label{t21}
    Suppose that $U$ is an invariant domain of $S$ with finitely many ideal boundary points.
    Assume that all fixed points of $f$ in $fr_SU$ are non degenerate.
    If $e$ is a fixed regular prime end and $p$ is a principal point of $e$
    (which is fixed by Cartwright-Littlewood's theorem), then $p$ can not be elliptic.
    Furthermore, one of the branches of $p$ is an invariant connection contained in $fr_SU$.
    
    \iitem\label{t22} Suppose that $L$ is an invariant branch of $f$ and that all fixed points of $f$
    contained in $cl_SL$ are non degenerate. Then either $L$ is a connection or $L$
    accumulates on both adjacent branches 
    through the adjacent sectors.
    In the later alternative $L\subset\om(L)$.
    
    \iitem\label{t23} Let $p$ be a fixed point of $f$ of saddle type and let  $L_1$ and $L_2$ be adjacent
    branches of $p$ that are not connections. If $L_1$ and $L_2$ 
    are invariant and all fixed points of $f$ contained in 
    $cl_S(L_1\cup L_2)$ are non degenerate, then
    $cl_SL_1=cl_SL_2$.
    
    \iitem\label{t24} Let $p$ be a fixed point of $f$ of saddle type. Assume that all fixed points of
    $f$ contained in $cl_S(W^u_p\cup W^s_p)$ are non degenerate.
    If the branches of $p$ are invariant and none of them is a connection,
    then all the branches of $p$  have the same closure in $S$.
    
    \end{enumerate}

    \end{Theorem}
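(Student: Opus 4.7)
The plan is to prove the four statements in the stated order, with (1) as the new technical input and (2)--(4) following Mather's scheme from \cite{Mat9} with (1) in place of Moser stability.

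For (1), I argue by contradiction: suppose $p$ is elliptic, and choose continuous coordinates on a neighborhood $V$ of $p$ in which $f$ acts as a rotation $R_\theta$ with $\theta\ne 0$. Let $b$ be the ideal boundary point of $U$ whose prime-ends circle $\E_b$ contains $e$, and let $W$ be the component of $U\cap V$ associated to $b$. Invariance of $U$ forces $W$ and the part of $fr_SU$ meeting $V$ to be $R_\theta$-invariant in the elliptic chart; in the irrational case the frontier is a union of concentric circles, and in the rational case a rotationally symmetric sectoral pattern. This symmetry induces a conjugation between $\hat f|_{\E_b}$ and $R_\theta$: each regular prime end in $\E_b$ is identified with the angular direction through which its cross-cut chain approaches $p$, and $\hat f$ rotates these directions by $\theta$. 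A nonzero rotation has no fixed point on a circle, contradicting that $e$ is fixed. Hence $p$ is not elliptic and, being non-degenerate, must be of saddle type. For the second assertion, the cross-cut chain of $e$ enters a single sector $\Si$ of $p$ and shrinks to it, so the portion of $fr_SU$ adjacent to this chain is a local branch of $p$; invariance of $U$ extends this to an invariant branch $L\subset fr_SU$. Compactness of $fr_SU$ together with non-degeneracy of all fixed points in $fr_SU$ (applying the first part of (1) again at any other limit fixed point of $L$) forces $L$ to terminate at another saddle fixed point, yielding an invariant connection.

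Given (1), parts (2)--(4) follow Mather's scheme. For (2), let $L$ be an invariant, non-connection branch of $p$, and let $U$ be the invariant connected component of $S\setminus cl_SL$ adjacent to one of the sectors $\Si$ bordering $L$. If the closure of $L\cap\Si$ failed to meet the other adjacent branch, the prime-ends compactification $\hU$ would admit a fixed regular prime end whose principal point lies in $cl_SL$; by (1) this would produce an invariant connection inside $fr_SU\supset L$, forcing $L$ itself to be a connection, contrary to hypothesis. Hence $L$ accumulates on both adjacent branches through the adjacent sectors, and iterating $f$ together with compactness yields $L\subset\om(L)$. Part (3) is immediate by applying (2) to $L_1$ and $L_2$: each accumulates on the other through their common adjacent sector, so $cl_SL_2\subseteq cl_SL_1$ and symmetrically. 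Part (4) follows from (3) applied cyclically to the four pairs of consecutive (adjacent) branches of $p$, giving a cycle of equalities between their closures.

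The principal obstacle is the technical core of (1): making rigorous the conjugation of $\hat f|_{\E_b}$ with $R_\theta$. One must classify the $R_\theta$-invariant components of $U\cap V$, identify which prime ends of $b$ are regular and determine the angular directions through which they approach $p$, and verify the $\hat f$-equivariance of this identification. The rational-angle subcase is particularly delicate because $R_\theta^n=\mathrm{id}$ for some $n$ allows many fixed prime ends of $\hat f^n$; one then argues directly that the \emph{rotation number} of $\hat f|_{\E_b}$ is $\theta/2\pi \bmod 1$, which is nonzero, precluding any fixed prime end.
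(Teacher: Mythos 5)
The proposal's argument for part \eqref{t21} rests on a premise that is simply false, and this is not a gap that can be patched. You assert that ``Invariance of $U$ forces $W$ and the part of $fr_SU$ meeting $V$ to be $R_\theta$-invariant in the elliptic chart,'' and then classify the frontier near $p$ as concentric circles (irrational case) or a rotationally symmetric sectoral pattern (rational case). But $U$ is invariant under $f$, not under the rotation $R_\theta$. The hypothesis on $p$ is only that $f$ is \emph{differentiable at $p$} with $df_p$ a rotation; the map $f$ itself is not a rotation on any neighborhood of $p$, and need not even be conjugate to one (indeed, the whole purpose of the theorem, as explained in the abstract, is to drop the Moser stability hypothesis and treat elliptic points that may have \emph{no} invariant circles nearby). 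Consequently $fr_S U$ near $p$ carries no rotational symmetry, the alleged conjugation between $\hat f$ on the prime-end circle and $R_\theta$ does not exist, and the claim that ``a nonzero rotation has no fixed point on a circle'' has nothing to act on. Your closing paragraph already flags this conjugation as the ``principal obstacle''; the problem is that it is not a technicality to be filled in but a false statement.

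The paper's actual proof of \eqref{t21} takes a completely different route. One first reduces to the case that $e$ is accessible (the principal set is a single fixed point since non-degenerate fixed points are isolated), then uses Lemma~\ref{L2} to produce a prime chain $(V_i)$ with $fr_U V_i$ lying on small circles around $p$. Lemma~\ref{L3} (the Cartwright--Littlewood mechanism) gives $f(\beta)\cap\beta\ne\emptyset$ and $f^n(\beta)\cap\beta\ne\emptyset$ for the arc $\beta=fr_U V_i$. Then, using only the pointwise estimate $|f(z)-f'(0)z|<\e|z|$ coming from differentiability at $p$ (Lemmas~\ref{L4}--\ref{L5}), one lifts to $\re\times\,]0,\de[$ and shows (Lemmas~\ref{L6}--\ref{L8}) that the chain $\beta\cup f\beta\cup\cdots\cup f^n\beta$ wraps at least once around $p$, producing a closed loop $\xi\subset U\cap B^\circ$ separating $p$ from the rest. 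Since $Z(b)$ is connected, contains $p$, and escapes $B$, it must meet $\xi\subset U$, contradicting $Z(b)\subset fr_S U$. No rotational symmetry of $U$ or its frontier is ever used. The ``connection'' assertion then comes from a careful analysis (Lemma~\ref{L11}) of the map $\phi(e')=X(e')$ on an arc of the prime-end circle, not from a compactness-plus-alternating-saddles heuristic. Your outline of \eqref{t22}--\eqref{t24}, given \eqref{t21}, is roughly in the right spirit, but with the proof of \eqref{t21} broken the argument as a whole does not stand.
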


    Allowing the existence of some degenerate fixed points is useful in applications.
    Item~\eqref{t22} gives a strong dichotomy between recurrent and 
    non recurrent behaviour of $L$.

    As we shall see, each boundary component of $\hU$ is a circle periodic under $\hf$.  
    Now consider the case when $U$ is periodic of period $n$. Let $C$ be a 
    boundary component of $\hU$ and $k$ the smallest positive number such that 
    $f^{nk}(C)=C$. We define the rotation number of $U$ at $C$ as the rotation number of 
    $(\widehat{f^n})^k$ restricted to $C$. 
                We call these the 
                 {\it boundary
                   rotation numbers} of $U$.

    Mather was interested in the differentiable setting.
    
    Assume  $S$ and $\mu$ to be smooth.  
    For $1\le r\le \infty$ let $D^r_\mu(S)$ be the space of $C^r$ 
    area preserving diffeomorphisms $f$ of $S$ with the $C^r$
    topology.
    
    For $f\in D^1_\mu(S)$ consider the following properties:
    \begin{enumerate}
    \iitem\label{m1} Every periodic point of $f$ of period $\tau$ is non degenerate 
    (i.e. $1$ is not an eigenvalue of $d(f^\tau)_p$).
    \iitem\label{m2} Every elliptic periodic point is irrationally elliptic.
    \iitem\label{m3} Every elliptic periodic point of $f$ is Moser stable.
    \iitem\label{m4} $f$ has no periodic connections.
    \end{enumerate}
    It is well know that properties \eqref{m1}, \eqref{m2} 
    and \eqref{m4} are generic for $1\le r\le\infty$.
    Property~\eqref{m3} is generic for $r\ge 16$ 
    (see remark just before Theorem 6.4 of \cite{FLC}).
    
    From Theorem~\ref{T1} and standard category arguments, 
    we have that if a diffeomorphism satisfies \eqref{m1}, \eqref{m3} and~\eqref{m4},
    then the branches of every hyperbolic periodic point have the same closure. 
    This holds generically when $r\ge 16$. This was Mather's original genericity result. 
    (He also proved that generically the boundary rotation numbers 
    of every periodic open connected subset with finitely 
    many ideal boundary points are irrational).

    From Theorem~\ref{T2} we have that if a diffeomorphism satisfies 
    \eqref{m1}, \eqref{m2} and \eqref{m4}, then the branches of every hyperbolic
    periodic point have the same closure and the rotation numbers of 
    every periodic open connected subset with finitely many ideal boundary points 
    are irrational. So we have the following:

  \begin{Theorem}\label{T3}\quad
  
  Let $S$ be a compact connected orientable smooth surface provided
  with a finite smooth measure $\mu$. For $1\le r\le \infty$ let $D^r_\mu(S)$ 
  be the space of $C^r$ area preserving diffeomorphisms $f$ of $S$
  with the $C^r$ topology.
  
  Let $R$ be the set of $f\in D^r_\mu(S)$  that satisfy \eqref{m1}, \eqref{m2} and \eqref{m4}.
  Then $R$ is a residual subset of $D^r_\mu(S)$ 
  and every $f\in R$ satisfies the following:
  \begin{enumerate}
  \iitem\label{3a} The branches of each hyperbolic periodic point have the same closure.
  \iitem\label{3b} The boundary rotation numbers of every periodic domain with finitely many ideal boundary points are irrational.
  \end{enumerate}
  \end{Theorem}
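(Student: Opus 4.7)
The plan is to derive residuality from the classical genericity of the three listed conditions and then to deduce \eqref{3a} and \eqref{3b} from the corresponding parts of Theorem~\ref{T2} applied to appropriate iterates of $f$. Residuality is immediate: each of \eqref{m1}, \eqref{m2} and \eqref{m4} is, as recalled in the excerpt, a countable intersection (indexed by period) of open dense subsets of $D^r_\mu(S)$ for $1\le r\le\infty$, and hence residual; so $R$ is residual as a finite intersection of residual sets.

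For \eqref{3a}, take a hyperbolic periodic point $p$ of $f$ of period $\tau$, and set $g:=f^{2\tau}$. Then $g$ is area and orientation preserving, $p$ is a fixed saddle of $g$, and the factor $2$ makes each of the four branches of $p$ globally $g$-invariant irrespective of the signs of the eigenvalues of $df^\tau_p$. Every fixed point of $g$ is a periodic point of $f$, so by \eqref{m1} and \eqref{m2} it is either a hyperbolic saddle or an irrationally elliptic point, and in both cases non-degenerate in the sense of the paper. If some branch of $p$ under $g$ were a connection it would be a periodic connection of $f$, violating \eqref{m4}. Theorem~\ref{T2}\eqref{t24} therefore applies to $g$, yielding that the four branches of $p$ share a common closure, which is \eqref{3a}.

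For \eqref{3b}, let $U$ be a periodic domain of period $n$ with finitely many ideal boundary points, let $C$ be a boundary circle of $\hU$, and let $k$ be minimal with $\widehat{f^{nk}}(C)=C$. Suppose for contradiction that the rotation number of $(\widehat{f^n})^k|_C$ is rational, equal to $a/b$ in lowest terms, and put $g:=f^{nkb}$, so that $\widehat g|_C$ has rotation number zero and hence a non-empty fixed set. Using the standard prime-ends fact that the regular prime ends form a dense $G_\delta$ subset of $C$, together with area-preservation to ensure that the fixed set of $\widehat g|_C$ meets this residual subset (passing to a further fixed power of $g$ if necessary), one selects a fixed regular prime end $e$ of $\widehat g$. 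Since every fixed point of $g$ is periodic for $f$, \eqref{m1} and \eqref{m2} guarantee that all such points in $fr_SU$ are non-degenerate. Theorem~\ref{T2}\eqref{t21} then produces a principal point $p$ of $e$, of saddle type, one of whose branches is a $g$-invariant connection contained in $fr_SU$. Such a branch is a periodic connection of $f$, contradicting \eqref{m4} and proving \eqref{3b}.

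The most delicate step is the production of a fixed regular prime end in \eqref{3b}: a priori the fixed set of a rotation-number-zero circle homeomorphism can be thin, while the regular prime ends form only a residual subset of $C$. I expect this to be the main obstacle, to be resolved by exploiting area-preservation together with the prime-ends machinery already invoked in the proof of Theorem~\ref{T2}\eqref{t21} itself.
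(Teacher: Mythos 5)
Your overall strategy — deriving both conclusions from Theorem~\ref{T2} applied to suitable iterates of $f$ — is exactly what the paper intends; Theorem~\ref{T3} is presented there as an immediate corollary, with the proof merely sketched in the paragraph preceding the statement ("From Theorem~\ref{T2} we have that\dots"). Your treatment of residuality and of item \eqref{3a} via $g=f^{2\tau}$ is correct, and the reduction of \eqref{3b} to producing a fixed prime end on a Caratheodory circle for a power $g=f^{nkb}$ is the right move.

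However, the step you single out at the end as "the most delicate" and "the main obstacle" is in fact automatic, and the difficulty you describe stems from a misreading of the paper's terminology. In the setup of Section~\ref{S2}, a prime end $e$ is called \emph{regular} when $Z(\alpha(e))$ has more than one point, and the Caratheodory circle $C=C(b)=\alpha^{-1}(b)$ exists precisely over a \emph{regular} ideal boundary point $b$ (one with $Z(b)$ non-degenerate). Since $\alpha(e)=b$ for every $e\in C$, \emph{every} prime end lying on a Caratheodory circle is regular; the paper says explicitly that $\partial\hU$, the union of the Caratheodory circles, is exactly the set of regular prime ends. So there is no residual-subset issue and no need to invoke area-preservation at that point: once $\widehat{g}|_C$ has rotation number zero, any fixed prime end in $C$ is regular, and Theorem~\ref{T2}\eqref{t21} applies at once, producing a $g$-invariant saddle connection inside $fr_SU$, hence a periodic connection of $f$ contradicting \eqref{m4}. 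You are most likely conflating regularity with the genuinely subtler classical notion of \emph{accessibility} of a prime end (or of a prime end having singleton principal set), which indeed only holds on a dense subset of $C$ in general. But \eqref{t21} assumes regularity, not accessibility; and, as its proof in Section~\ref{S3} shows, accessibility is then \emph{derived} from the non-degeneracy of the fixed points in $fr_SU$ (which forces the principal set to be a singleton) — exactly where your invocation of \eqref{m1} and \eqref{m2} is doing its work. With that phantom obstacle removed, your argument is complete.
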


  For fixed points we have the following:
  
  \begin{Theorem}\label{T4}
  \quad
  
  Let $S$ be a compact connected orientable smooth surface provided 
  with a finite smooth measure.
  
  Let $A$ be the set of $f\in D^r_\mu(S)$ such that every fixed point of $f$
  is non degenerate and $f$ has no invariant connections.
  The set $A$ is $C^1$ open and $C^r$ dense for every $r$.
  
  If $f\in A$ then the branches of each hyperbolic fixed point have the same closure.
  \end{Theorem}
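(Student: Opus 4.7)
The plan is to establish in sequence the $C^1$ openness of $A$, the $C^r$ density of $A$, and then the conclusion on closures of branches, which I will extract from item~\eqref{t24} of Theorem~\ref{T2}.

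For the $C^1$ openness, I would fix $f\in A$ and observe that, since $S$ is compact and every fixed point of $f$ is non degenerate, the fixed point set is finite and at each such $p$ the operator $df_p-I$ is invertible. The implicit function theorem then yields a $C^1$ neighborhood of $f$ in which the fixed points depend continuously on the diffeomorphism, remain non degenerate, and no new ones appear. For the condition ``no invariant connections,'' I would note that an invariant connection is the coincidence, as a set, of an invariant unstable branch with a stable (or another unstable) branch at one of the finitely many saddles; since the local invariant manifolds of these saddles vary $C^1$-continuously with $f$, a standard transversality argument shows that absence of such coincidences persists under small $C^1$ perturbations.

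For the $C^r$ density, I would invoke the Kupka--Smale theorem in the area preserving category, which asserts that $C^r$-generically every periodic point of $f$ is non degenerate and $f$ has no saddle connections at all. Both properties are strictly stronger than the defining conditions of $A$, so $A$ contains a $C^r$ residual subset of $D^r_\mu(S)$ and is in particular $C^r$ dense.

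For the final assertion, let $p$ be a hyperbolic fixed point of $f\in A$ with expanding eigenvalue $\lambda$. When $\lambda>0$ the four branches of $p$ are $f$-invariant; no branch can be a connection, for otherwise it would be an invariant connection, contradicting $f\in A$. By the definition of $A$ every fixed point of $f$ in $cl_S(W^u_p\cup W^s_p)$ is non degenerate, so item~\eqref{t24} of Theorem~\ref{T2} immediately gives that all four branches share a common closure. When $\lambda<0$ the branches of $p$ are permuted pairwise by $f$ but are invariant under $f^2$, and the same argument applied to $f^2$ gives the result. The main obstacle lies precisely in this last step: item~\eqref{t24} of Theorem~\ref{T2} applied to $f^2$ requires that every fixed point of $f^2$ in $cl_S(W^u_p\cup W^s_p)$ be non degenerate, whereas $f\in A$ controls only fixed points of $f$. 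Since $W^u_p$ and $W^s_p$ themselves contain no periodic orbit other than $p$, any troublesome period-$2$ points of $f$ must lie in the limit sets $\om(L_i)$ of the branches, and these can be handled by first invoking item~\eqref{t22} of Theorem~\ref{T2} to obtain $L_i\subset\om(L_i)$.
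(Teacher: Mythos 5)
Your overall strategy — openness via finitely many non\-degenerate fixed points, density via Kupka--Smale, and the conclusion via item~\eqref{t24} of Theorem~\ref{T2} — is the same route the paper implicitly takes (Theorem~\ref{T4} is stated without a displayed proof and is meant to follow from the discussion around Theorem~\ref{T2}). But your proof has a genuine gap exactly where you yourself flag a worry, and the fix you offer does not close it.

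For a hyperbolic fixed point $p$ whose expanding eigenvalue is negative, the four branches of $p$ are not $f$-invariant but only $f^2$-invariant, so you must apply item~\eqref{t24} to $f^2$. The hypothesis of \eqref{t24} then becomes: \emph{every fixed point of $f^2$} in $cl_S(W^u_p\cup W^s_p)$ is non degenerate (for $f^2$). Membership in $A$ controls only fixed points of $f$, and two kinds of trouble remain. First, a genuine period-$2$ orbit of $f$ lying in the closure is a pair of fixed points of $f^2$ about which $A$ says nothing; knowing $L\subset\om(L)$ via item~\eqref{t22} tells you nothing about the non-degeneracy of such points, so this does not rescue the argument. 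Second, and you do not mention this, an elliptic fixed point $q$ of $f$ with $df_q$ a rotation by angle $\pi$ is non degenerate for $f$ (it \emph{is} elliptic in the paper's sense) yet $d(f^2)_q=I$, so $q$ is degenerate for $f^2$; the hypothesis of \eqref{t24} for $f^2$ again fails. Note that both Mather's Theorem~\ref{T1} and the paper's item~\eqref{t24} explicitly assume the branches to be \emph{invariant}, and the paper's periodic-point statement, Theorem~\ref{T3}, is proved from the stronger hypotheses \eqref{m1}--\eqref{m2} on \emph{all} periodic points precisely so that powers of $f$ can be taken; Theorem~\ref{T4} only imposes conditions on fixed points, which is insufficient to carry out the power trick. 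The cleanest reading of Theorem~\ref{T4} is that the conclusion is intended for saddles with invariant branches (positive eigenvalues); if you want to include the $\lambda<0$ case, you need an additional hypothesis (or a different argument), and your proposed fix is not it.

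Separately, your justification of the $C^1$-openness of the condition ``no invariant connections'' is only asserted (``a standard transversality argument''). An invariant connection is a \emph{global} coincidence of two noncompact branches, and the persistence of its absence is not a pointwise transversality statement: a $C^1$-small perturbation controls the branches only on compact pieces, and nothing in what you wrote rules out that a sequence $f_n\to f$ with each $f_n$ carrying an invariant connection of unbounded combinatorial length accumulates on an $f\in A$. The paper asserts the openness as well without argument, so this is not a discrepancy with the paper, but if you intend to prove it rather than cite it, you need an actual argument — for example, exploiting that, for $f\in A$, each invariant branch $L$ of a $\lambda>0$ saddle satisfies $L\subset\om(L)$ by item~\eqref{t22}, whereas the closure of a connection is an arc, and then passing this dichotomy to Hausdorff limits. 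As written, this step is not established.
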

  
  In section~\ref{S2} we present the facts about prime ends we need. In section~\ref{S3}
  we prove Theorem~\ref{T2}. In section~\ref{S4} we extend results to surfaces with boundary
  and present some results about the existence of homoclinic points. In section~\ref{s5} 
   we give
  applications to the standard map family. 
  
  In section~\ref{S4} we prove the results about homoclinic points for 
  partially defined area preserving
  maps of surfaces with boundary 
  in order to be applicable to Poincar\'e maps
  of surfaces of section of Reeb flows and to 
  holonomy maps of broken book decompositions.
  In \cite{CO3} we use these results to prove that
  for a Kupka-Smale riemannian metric on a closed surface
  every hyperbolic geodesic has homoclinics in all its branches.
  
  \section{Prime ends.}\label{S2}

 In sections~\ref{S2} and ~\ref{S3}, $S$ will be a compact connected orientable surface without boundary.
 Now we are going to describe the theory of prime ends of Mather~\cite{Mat9} and~\cite{Mat12}.

 \subsection{The ideal boundary.}\quad
 
 We will be interested in the connected components of the complement of the closure of invariant manifolds and branches. If $K$ is a compact connected subset of $S$, a {\it residual domain of $K$}
 is a component of $S- K$.
 
 Let $U$ be a connected surface without boundary. 
 We are going to describe a 
 \linebreak
 compactification of $U$ 
 by the addition of its  topological ends or boundary components.
 See~\cite{Massey}, \cite{Rich}.
 
 A {\it boundary component of $U$} is a 
 decreasing sequence $P_1\supset P_2\supset\cdots$
 of open connected subsets of $U$ such that:
 \begin{enumerate}
 \iitem\label{b1}
 Every $P_n$ is not relatively compact.
 \iitem\label{b2}
 Every $fr_U(P_n)$ is compact.
 \iitem\label{b3}
 If $K$ is a compact subset of $U$, 
 then there is $n_0$ such that $K\cap P_n=\emptyset$
 for $n \ge n_0$.
 \end{enumerate}

Two boundary components $P_1\supset P_2\supset\cdots$ and 
$P_1'\supset P_2'\supset \cdots$ are equivalent if 
for every $n$ there is $m$ such that $P_m\subset P'_n$ and vice versa.
An {\it ideal boundary point} is an equivalence class of boundary components. 
The set of ideal boundary points $b_IU$ is called 
the {\it ideal boundary} of $U$ and the disjoint union
$U^*=U\cup b_IU$ is called the {\it ideal completion} of $U$.
We have that $b_IU=\emptyset$ if and only if $U$ is compact.
Let $A$ be an open subset of $U$ with $fr_U(A)$ compact and 
let $A'$ be the set of ideal boundary points whose representing
boundary components $(P_n)$ satisfy $P_n\subset A$ for $n$
larger than some $n_0$. 
The collection of sets $A\cup A'$ where $A$ is an open 
subset of $U$ with $fr_U(A)$ compact forms a basis for a topology of $U^*$.

The space $U^*$ is a compactification of $U$ characterized by the facts that 
$U^*$ is locally connected and $b_IU$ is totally disconnected and nonseparating
on $U$ (i.e. for any open connected subset $V$ of $U^*$, $V-b_IU$ is connected),
(cf. \cite[\S 36--37, ch.1]{AhSa}).

If $U$ is a domain of $S$ and $b_IU$ is finite, 
then $U^*$ is a connected compact orientable surface without boundary
(Proposition 2.1 in~\cite{Mat9} or Prop.~3.12 in~\cite{OC2}).
This happens for residual domains (Lemma~2.3 of \cite{Mat9}).
 
 For $b\in b_IU$ let 
 $$
 Z(b):=\textstyle\bigcap\nolimits_i cl_S(N_i\cap U),
  $$  
  where $(N_i)$ is a fundamental system of neighborhoods of $b$ in $U^*$.
  Then $Z(b)$ is just the set of limit points in $S$ of sequences in $U$ that 
  converge to $b$ in $U^*$.
  The set $Z(b)$ is non-empty, connected, compact and $fr_S(U)=\bigcup_{b\in b_IU}Z(b)$.
  If $Z(b)$ contains more than one point we say that $b$ is a {\it regular} ideal boundary point of $U$.

  \subsection{The prime ends compactification.}\label{ss22}\quad
  
  Let $U$ be a domain of $S$ such that $b_IU$ is finite.
  Now we describe another compactification of $U$ 
  by replacing each regular ideal boundary point by a circle.
  
  A {\it chain} is a sequence $V_1\supset V_2\supset \cdots$ 
  of open connected subsets of $U$ such that 
  \begin{enumerate}
  \iitem\label{chn1} $fr_U(V_i)$ is non-empty and connected for every $i\ge 1$, and 
  \iitem\label{chn2} $cl_S(fr_U(V_i))\cap cl_S(fr_U(V_j))=\emptyset$ for $i\ne j$.
  \end{enumerate}
  A chain $(W_j)$ {\it divides } $(V_i)$ if for every $i$ there exists $j$ 
  such that $W_j\subset V_i$.
  Two chains are equivalent of each one divides the other.
  A chain is {\it prime} if any chain which divides it is equivalent to it.
  A sufficient condition for a chain $(V_i)$ to be prime is that there exists
  $p\in S$ such that $fr_U(V_i)\to p$, that is, every neighborhood
  of $p$ contains all but finitely many of the sets $fr_U(V_i)$.
  
  A {\it prime point} is an equivalence class of prime chains.
  Let $x\in U$ and consider a family of closed disks $D_1\supset D_2\supset \cdots$ 
  in $U$, such that $D_{i+1}\subset int_U(D_i)$ and $\cap_i D_i=\{x\}$.
  Then $(int_U(D_i))$ is a prime chain that defines a prime point denoted by $\om(x)$.
  A {\it prime end} is a prime point which is not of the form $\om(x)$ for any $x\in U$.
   
  The set of prime points of $U$ is denoted by $\hU$. We consider a topology on $\hU$
  defined as follows. Let $V$ be an open subset of $U$ 
  and denote by $V'$ the set of prime points of $U$ 
  whose representing chains are eventually contained in $V$.
  The collection of sets $V'\subset \hU$ such that $V$ is an open subset of $U$
  is a basis for the topology on $\hU$.
  The function $\om: U\to \hU$ is a homeomorphism from $U$ onto an open subset of $\hU$,
  and we identify $U$ with $\om(U)$.
  
  If $e\in\hU$, let 
  $$
  \a(e):=\textstyle\bigcap_icl_{U^*}(V_i)\,,
  $$
  where $(V_i)$ is a chain representing $e$.
  Then $\a(e)$ consists of one point and $\a:\hU\to U^*$ 
  is a continuous function whose restriction to $U$
  is the inclusion ($\a\circ\om$ is the identity on $U$).
  A prime end $e\in \hU-U$ is said regular if 
  $Z(\a(e))$ contains more than one point.
  
  The set $\hU$ is a compact connected surface with boundary.
  This is Theorem~10 of \cite{Mat12}.
  $\hU-U$ is the set of prime ends of $U$, the boundary $\partial\hU$
  is the set of regular prime ends of $U$
  and the boundary components of $\hU$ 
  are the sets $\a^{-1}(b)$ where $b$ varies on the set 
  of regular ideal boundary points of $U$.
  The set $\a^{-1}(b)$ is homeomorphic to a circle called
  the {\it Caratheodory circle associated with $b$}.
  We denote it by $C(b)$.
  
  Let $e\in\hU$ and let $(V_i)$ be a chain representing $e$.
  The set 
  $$
  Y(e)=\textstyle\bigcap_i cl_S(V_i)
  $$
  is called the {\it impression of $e$}.
  The definition does not depend on the representing chain, 
  and $Y(e)$ is a compact, connected, non-empty subset of $S$.

  We say that $p\in S$ is a {\it principal point} of $e$ if there is a chain $(V_i)$
  which represents $e$ and for which $fr_U(V_i)\to p$ 
  (given a neighborhood $V$ of $p$ there exists $n_0$ such that
  $fr_U(V_i)\subset V$ for all  $n\ge n_0$).
  The set of principal points of $e$ is called the 
  {\it principal set} of $e$ and is denoted by $X(e)$.
  It is a non-empty, compact, connected subset of $S$, and 
  $$
  X(e)\subset Y(e)\subset Z(\a(e)).
  $$
  
  If $X(e)=\{p\}$ for some $p\in fr_S(U)$, we say that $e$ is an {\it accessible prime end}.
  The point $e\in\hU$ is an accessible prime end if and only if
  there exists a path $\be:]0,1]\to U$ such that $\lim_{t\to 0}\be(t)=p$ in $S$
  and $\lim_{t\to 0}\be(t)=e$ in $\hU$.
  
  \begin{Lemma}\label{L1}\quad
  
  Let $U$ be an open connected set.
  Let $A_1$ and $A_2$ be open connected subsets of $U$, both non-empty and 
  different from $U$.
  Suppose that 
  \begin{enumerate}
  \iitem\label{l11}
  $A_1\cap A_2\ne\emptyset$.
  \iitem\label{l12}
  $A_1\cap (U-A_2)\ne \emptyset$.
  \iitem\label{l13}
  $A_2\cap(U-A_1)\ne\emptyset$.
  \iitem\label{l14}
  $fr_U(A_1)$ and $fr_U(A_2)$ are connected and disjoint.
  \end{enumerate}
  Then we have the following facts:
  \begin{enumerate}
  \iitem\label{l1a}
  $fr_U(A_1)\subset A_2$ and $fr_U(A_2)\subset A_1$.
  \iitem\label{l1b}
  $U=A_1\cup A_2$.
  \end{enumerate}

  \end{Lemma}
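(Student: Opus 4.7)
The plan is to derive \eqref{l1a} from connectedness arguments and then to deduce \eqref{l1b} from \eqref{l1a} using the connectedness of $U$. The topological fact I will apply repeatedly is that, for any open $A\subset U$, we have a disjoint decomposition $U=A\sqcup fr_U(A)\sqcup(U\setminus cl_U(A))$, so $U\setminus fr_U(A)=A\sqcup(U\setminus cl_U(A))$ is a disjoint union of two open subsets of $U$; consequently any connected subset of $U$ disjoint from $fr_U(A)$ lies entirely in $A$ or entirely in $U\setminus cl_U(A)$.

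For \eqref{l1a}, first I would show that $A_2\cap fr_U(A_1)\ne\emptyset$. If this intersection were empty, then applying the observation above to $A=A_1$ and the connected set $A_2$ would give either $A_2\subset A_1$ or $A_2\subset U\setminus cl_U(A_1)$; the first contradicts \eqref{l13} and the second contradicts \eqref{l11}. Next I apply the same idea to the connected set $fr_U(A_1)$ itself. Hypothesis~\eqref{l14} gives $fr_U(A_1)\cap fr_U(A_2)=\emptyset$, so $fr_U(A_1)\subset A_2\sqcup(U\setminus cl_U(A_2))$; the two pieces are open in $fr_U(A_1)$ for the subspace topology, and since the first one is non-empty by what was just shown, connectedness of $fr_U(A_1)$ forces $fr_U(A_1)\subset cl_U(A_2)$. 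Subtracting $fr_U(A_2)$ from $cl_U(A_2)=A_2\cup fr_U(A_2)$, using \eqref{l14} once more, gives $fr_U(A_1)\subset A_2$. The inclusion $fr_U(A_2)\subset A_1$ follows by interchanging indices, invoking \eqref{l12} in place of \eqref{l13}.

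For \eqref{l1b}, part \eqref{l1a} gives $cl_U(A_1)=A_1\cup fr_U(A_1)\subset A_1\cup A_2$, and symmetrically $cl_U(A_2)\subset A_1\cup A_2$. Hence $A_1\cup A_2=cl_U(A_1)\cup cl_U(A_2)$ is simultaneously open (as a union of open sets) and closed (as a union of closed sets) in $U$; being non-empty in the connected space $U$, it must equal $U$.

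The main delicate point I anticipate is the bookkeeping in the second step of \eqref{l1a}: one must verify that when $fr_U(A_1)$ is partitioned using the open sets $A_2$ and $U\setminus cl_U(A_2)$, the two pieces are genuinely open in the subspace topology on $fr_U(A_1)$ (which itself is typically not open in $U$), and that hypotheses \eqref{l11}--\eqref{l13} are the exact conditions needed to exclude each degenerate containment. Once this is set up cleanly, both \eqref{l1a} and \eqref{l1b} follow almost automatically from the disjoint-open decomposition and from the connectedness of $A_1$, $A_2$, $fr_U(A_1)$, $fr_U(A_2)$, and~$U$.
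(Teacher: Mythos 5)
Your proof is correct and follows essentially the same route as the paper: for \eqref{l1a}, both arguments exploit the disjoint open decomposition $U = A \sqcup fr_U(A) \sqcup (U\setminus cl_U(A))$ together with the connectedness of $A_2$ (to force $A_2 \cap fr_U(A_1)\ne\emptyset$ via \eqref{l11} and \eqref{l13}) and of $fr_U(A_1)$ (to conclude $fr_U(A_1)\subset A_2$ once $fr_U(A_1)\cap fr_U(A_2)=\emptyset$), and for \eqref{l1b} both show $A_1\cup A_2$ is clopen in the connected set $U$. The only cosmetic difference is in \eqref{l1b}: you observe directly that $A_1\cup A_2 = cl_U(A_1)\cup cl_U(A_2)$ is a union of closed sets, whereas the paper verifies closedness by a limit-point argument, so your version is a touch cleaner but not genuinely different.
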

  
  \begin{proof}
  
  Since $fr_U(A_1)$ is connected and does not intersect $fr_U(A_2)$, we must have either $fr_U(A_1)\subset A_2$ or $fr_U(A_1)\subset U- A_2$.
  But $A_2$ is a connected set that instersects both 
  $A_1$ and $U-A_1$, and therefore we have $A_2\cap fr_U(A_1)\ne\emptyset$.
  This implies that $fr_U(A_1)\subset A_2$.
  Similarly, $fr_U(A_2)\subset A_1$, which proves part \eqref{l1a}.

  In orther to prove item~\eqref{l1b}, 
  we are going to show that $A_1\cup A_2$ is open and closed in $U$.
  It is obviously open. Let $(a_n)$ be a sequence in $A_1\cap A_2$ converging to a point $a$.
  We may assume that $a_n\in A_1$ for infinitely many values of $n$, and therefore
  $a\in cl_U(A_1)$. If $a\in A_1$ we are done. If not, we have that 
  $a\in fr_U(A_1)$, and from item~\eqref{l1a}, $a\in A_2$.
  
  \end{proof}

\subsection{An useful lemma.}\quad

We would like to show that accessible prime ends can be represented
by chains whose frontiers are contained in arcs of circles.

For this we are going to consider $\re^2$ equipped with a norm
$\lV \;\,\rV$. We denote by $B_r$, $B^\circ_r$ and $C_r$ the closed ball, 
the open ball and the circle with center at $(0,0)$ and radius $r$, respectively.
By a closed disk $D$ we mean a set homeomorphic to a closed disk in $\re^2$.
We write $\partial D$ for its boundary and $D^\circ$ for its interior $D-\partial D$.

Let $e$ be an accessible prime end and let  $p$ be its principal point. 
In a neighborhood of $p$ we consider continuous coordinates with $p$ at the origin.

  \begin{Lemma}\label{L2}\quad
  
  Let $U$ be a domain of $S$ such that $b_IU$ is finite.
   Let $e$ be an accessible prime end of $U$ and $p$ its principal point.
  Then we have the following:
  \begin{enumerate}   
    \iitem\label{l2a}
  There exists $\de>0$ such that for any decreasing sequence $(r_n)_{n\ge 1}$
  contained in $(0,\de)$ with $\lim_{n\to \infty}r_n=0$ there exists a chain 
  $(V_n)$ representing $e$
  such that $fr_UV_n\subset C_{r_n}$.
  
  \iitem\label{l2b} 
  For every $n\ge 1$ if $\rho\in]r_{n+1},r_n[$
   then there exists an open subset $W$ of $U$ such that $fr_U W$ 
   is connected, contained in $C_\rho$ and $(W_i)$ defined by
   $W_i=V_i$ if $i\ne n+1$ and $W_{n+1}=W$ is also a chain representing $e$.
   \end{enumerate}
   \end{Lemma}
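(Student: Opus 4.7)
Using the accessibility of $e$, fix a path $\be:(0,1]\to U$ with $\be(t)\to p$ in $S$ and $\be(t)\to e$ in $\hU$ as $t\to 0^+$. Since $b_IU$ is finite and each $Z(b)$ is compact, I first choose $\de>0$ small enough that $B_{2\de}$ is contained in the chosen coordinate chart around $p$, that $B_{2\de}\cap Z(b)=\emptyset$ for every $b\in b_IU$ with $p\notin Z(b)$, and that $\be((0,t_0])\subset B_{\de/2}^\circ$ for some $t_0>0$.

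For part~\eqref{l2a}, given $(r_n)$ decreasing to $0$ in $(0,\de)$, for each $n$ pick $\tau_n\in(0,t_0]$ with $\be((0,\tau_n])\subset B_{r_n}^\circ\cap U$ (such $\tau_n$ exists because $\be(t)\to p\in B_{r_n}^\circ$). Set $V_n$ to be the connected component of $U\cap B_{r_n}^\circ$ containing $\be((0,\tau_n])$. Openness and connectedness of $V_n$ are immediate. The nesting $V_{n+1}\subset V_n$ follows because $V_{n+1}\subset U\cap B_{r_n}^\circ$ is connected and meets $V_n$ along the common initial piece of $\be$. For the inclusion $fr_UV_n\subset C_{r_n}$: if $x\in fr_UV_n$ lay in $B_{r_n}^\circ$, it would belong to some component of $U\cap B_{r_n}^\circ$, hence either to $V_n$ itself (contradicting $x\in fr_UV_n$) or to another component (which, being open, would give a neighborhood of $x$ disjoint from $V_n$, contradicting $x\in cl_UV_n$). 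Non-emptiness of $fr_UV_n$ holds because $U\not\subset B_{r_n}^\circ$ (as $b_IU\ne\emptyset$), so $V_n$ is a proper subset of the connected set $U$. Finally $cl_S(fr_UV_n)\subset C_{r_n}$, so for distinct indices the closures are disjoint, yielding condition~\eqref{chn2}.

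The main technical step is to show that $fr_UV_n$ is connected. Since $fr_UV_n$ is relatively closed in $C_{r_n}\cap U$, it is a union of arc components; and since $\be(\tau_n)\in A_n\cap fr_UV_n$, where $A_n$ denotes the arc component of $C_{r_n}\cap U$ containing $\be(\tau_n)$, we have $A_n\subset fr_UV_n$. The reverse inclusion $fr_UV_n\subset A_n$ is the hard point, and uses the choice of $\de$: the restriction on $B_{2\de}\cap Z(b)$ forces $fr_SU\cap B_{2\de}$ to reduce to a simple family of pieces all accumulating at $p$, and one argues that a second arc component $A'\subset fr_UV_n$ would produce a chain in $V_n$ converging to $p$ through $A'$ whose associated prime end is distinct from, yet divided by, $(V_n)$, contradicting primality together with the hypothesis that $\be$ accesses $e$ with $X(e)=\{p\}$. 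I expect this connectedness to be the main obstacle. Once it is established, $(V_n)$ is a chain with $fr_UV_n\to p$, hence prime by the sufficient condition recalled in~\S\ref{ss22}; and because $\be((0,\tau_n])\subset V_n$ and $\be(t)\to e$ in $\hU$, the chain represents $e$.

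For part~\eqref{l2b}, given $\rho\in\,]r_{n+1},r_n[$ and the chain $(V_i)$ just constructed, I apply the same recipe at the single radius $\rho$: choose $\tau_\rho\in(0,t_0]$ with $\be((0,\tau_\rho])\subset U\cap B_\rho^\circ$ and let $W$ be the connected component of $U\cap B_\rho^\circ$ containing $\be((0,\tau_\rho])$. The inclusions $V_{n+1}\subset W\subset V_n$ follow from $B_{r_{n+1}}^\circ\subset B_\rho^\circ\subset B_{r_n}^\circ$ together with the presence of the common $\be$-tail in all three sets. The frontier $fr_UW$ lies in $C_\rho$ and is connected by exactly the argument of the previous paragraph; since $C_{r_{n+1}}$, $C_\rho$ and $C_{r_n}$ have distinct radii, the disjointness condition~\eqref{chn2} is preserved. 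Thus $(W_i)$, with $W_i=V_i$ for $i\ne n+1$ and $W_{n+1}=W$, is again a chain, and since the tail of $\be$ lies in every $W_i$, it still represents $e$.
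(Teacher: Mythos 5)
There is a genuine gap, and it sits exactly where you flag it: the connectedness of $fr_UV_n$. Your $V_n$ is the connected component of $U\cap B^\circ_{r_n}$ containing a tail of $\be$, and for that object the claimed connectedness is not just unproven in your text (you only sketch a hoped-for contradiction and call it ``the main obstacle'') --- it is false in general. The tail component of $U\cap B^\circ_{r}$ can wrap around pieces of $fr_SU$ that enter the ball through $C_{r}$ and terminate inside it (``fingers''), so that its closure meets two or more distinct components of $C_{r}\cap U$; then $fr_UV_n$ is a union of several circle arcs and condition~\eqref{chn1} for a chain fails. One can arrange such fingers at all scales accumulating only at $p$, keeping $fr_SU$ a single continuum (so $b_IU$ is finite) and keeping $e$ accessible with $X(e)=\{p\}$, so shrinking $\de$ does not rescue the construction. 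Note also that your sketched contradiction cannot work as stated: every extra frontier arc lies on $C_{r_n}$, hence any points chosen on these arcs still converge to $p$ as $r_n\to 0$, so no conflict with $X(e)=\{p\}$ or with primality of $(V_n)$ arises merely from the frontier being disconnected.

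This missing step is precisely the content of the paper's proof, which proceeds differently: it never takes components of $U\cap B^\circ_{r}$. Instead, working inside a closed disk $D\subset U^*$ around $b=\a(e)$ with $D\cap(U^*-U)=\{b\}$, it proves (Claim~\ref{c24}) that some \emph{single} connected component $\xi$ of $C_{r}\cap U$ separates $U$ into two pieces, with the initial segment $\be(]0,c[)$ on one side and $\be(d),\be(t_0)$ on the other; this uses that each component met by $\be$ is an arc in $D$ with both ends at $b$, plus a path-surgery induction to show that not all such arcs can fail to separate. The set $V_n$ is then defined as the side of $\xi_n$ containing the tail of $\be$, so $fr_UV_n=\xi_n$ is connected by construction, and the nesting $V_{n+1}\subset V_n$ (which for your definition was trivial) is recovered via Lemma~\ref{L1}. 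Without an argument of this kind --- selecting one separating arc of $C_{r_n}\cap U$ rather than a component of the ball intersection --- the proposal does not produce a chain, and parts \eqref{l2a} and \eqref{l2b} both remain unestablished.
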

   
   \begin{proof}\quad
   
   Let $\be:]0,1]\to U$ be a path such that $\lim_{t\to 0}\be(t)=p$ in $S$ and
   $\lim_{t\to 0}\be(t)=e$ in $\hU$.
   
   If $b=\a(e)$ then $\lim_{t\to 0}\be(t)=b$ in $U^*$.
   In fact, $\cap_{\e>0\,}cl_{U^*}\be(]0,\e[)$ 
   is a non-empty, compact,  connected subset of $U^*$
   which is disjoint from $U$.
   Since $U^*-U$ is finite, $\cap_{\e>0}\,cl_{U^*}\be(]0,\e[)$
   is a single point equal to $\lim_{t\to 0}\be(t)$ in $U^*$.
   By the definition of $\a$ this point must be $\a(e)$.
   
   Let $D$ be a closed disk in $U^*$ such that $b\in D^\circ$,
   $D\cap (U^*-U)=\{ b\}$ and 
   \linebreak
   $\be(]0,1])\cap \partial D\ne\emptyset$.
    The subset $\partial D\subset U$ is compact. 
   Therefore there exists $\de>0$
   such that $B_r\cap \partial D=\emptyset$ if $r<\de$. 
   Let $t_0=\inf\{\,t\in ]0,1]\;|\;\be(t)\in\partial D\,\}$.
   Then $\be(t_0)\in\partial D$ and $\be(]0,t_0[)\subset D^\circ$.

   \begin{Claim}\label{c24}
   For any $r\in]0,\de[$, if $\be(]0,c[)\subset B^\circ_r$ 
   and $\be(d)\notin B_r$ with $d\le t_0$, 
   then there exists a connected component $\xi$ of $C_r\cap U$ such that
   $U-\xi$ is the disjoint union of two open connected subsets of $U$ 
   with $\be(]0,c[)$ contained in one 
   and $\{\be(d),\be(t_0)\}$ contained in the other.
    \end{Claim}

   \color{black}
   \noindent
   {\it Proof of the claim}.

       First note that the components of $C_r\cap U$ form an open
     cover of $\be(]0,d])\cap C_r$ which is compact. Therefore $\be(]0,d])$
      intersects only finitely many components of $C_r\cap U$, 
      say $\la_1,\ldots,\la_n$.

      Since $\lim_{t\to 0}\be(t)=b\in D^\circ$ and $\be(]0,t_0[)\cap \partial D=\emptyset$,
      we have that $\be(]0,t_0])\subset D$.
      Then $\be(]0,d])\subset D-\{b\}$,
       and hence
      $\la_i\cap(D-\{b\})\ne \emptyset$. On the other hand, since
      $\la_i\subset C_{r_i}$, we have that 
      $\la_i\cap\partial D=\emptyset$.
      Therefore $\la_i\subset D-\{ b\}$.
      We have that $\la_i$ is an open arc in $D\subset U^*$ with end points at $b$.
      This implies that $U-\la_i$ has two components.
      
           We are going to assume that  $\be(]0,c[)$ and $\be(d)$ belong
     to the same component of $U-\la_i$  for $1\le i\le n$ and 
     obtain a contradiction.

      Let $\la_0$ be the union of the components of $C_r\cap U$ different from 
      $\la_1,\ldots,\la_n$.
      
      For $0\le i\le n-1$ assume that there is a path 
      $\a:[a,d]\to U-(\la_0\cup \cdots \cup \la_i)$
      from $\be(a)$ to $\be(d)$ where $a$ is any number in $]0,c[$.
      We are going to show that there is
       another path $\ga:[a,d]\to U-(\la_0\cup\cdots\cup \la_i\cup \la_{i+1})$
      from $\be(a)$ to $\be(d)$.
      
      If $\a$ does not intersect $\la_{i+1}$ then we just take $\ga=\a$. 
      When $\a\cap \la_{i+1}\ne \emptyset$ let 
      $u=\inf\{ t\,|\,\a(t)\in\la_{i+1}\}$ and
      $v=\sup\{\,t\,|\,\a(t)\in \la_{i+1}\}$.
      Since $\be(a)$ and $\be(d)$ are in the same component 
      of $U-\la_{i+1}$,
      if $u'<u$ and $v'>v$ then 
      $\a([a,u'])$ and $\a([v',d])$
      are contained in the same component of 
      $U-\la_{i+1}$. 
      If $u'$ and $v'$ are close enough to $u$ and $v$ then the arc $\eta$
      of the circle $C_{r'}$ with end points at $\a(u')$ and $\a(v')$
      does not intersect $\la_0\cup\cdots\cup \la_i\cup \la_{i+1}$.
      If we define $\ga$ by just replacing the part of $\a$ 
      between $\a(u')$ and $\a(v')$ by $\eta$,
      we obtain the desired path
      $\ga:[a,d]\to U-(\la_0\cup \cdots \cup \la_i\cup \la_{i+1})$ 
      from $\be(a)$ to $\be(d)$.
      
      We have that  $\be(]0,d])\cap \la_0=\emptyset$.
      Therefore there exists a path 
      $\ga_1:[a,d]\to U -(\la_0\cup \la_1)$
      from $\be(a)$ to $\be(d)$.
      By induction there exists a path 
      $\ga_n:[a,d]\to U -(\la_0\cup \cdots \cup \la_n)$
      from $\be(a)$ to $\be(d)$. 
      This is a contradiction, since any path in $U$ from
      $\be(a)$ to $\be(d)$ must intersect $C_r\cap U$.
      Therefore there is some $k$ such that $\be(]0,c[)$ 
      and $\be(d)$ belong to different components of $U-\la_k$.
      
      Each $\la_i$ is an open arc in $D^\circ\subset U^*$ with end points at $b$.
      The curve $\la_i^*=\la_i\cup\{b\}$ separates $U$ into two 
      components $V_i$ and $W_i$, where $W_i$ contains $\partial D$
      and $fr_{U^*} V_i=\la_i^*$. Since $\be(d)\notin B_r$, there is a path in 
      $D-B_r$ joining $\be(d)$ to a point in $\partial D$. In particular it does not 
      intersect $\la_i$. This implies that $\be(d)\in W_i$. Also $\be(t_0)\in\partial D\subset W_i$.
      
      For $i=k$, since $\be(d)\in W_k$, we have that  $\be(]0,c[)\subset V_k$, 
      and $\{\be(d),\,\be(t_0)\}\subset W_k$.

      \ted

      We construct the chain in the following way.
     For $i\ge 1$ let 
   $$
   t_i=\inf\{\,t\in]0,1]\,|\,\be(t)\in C_{r_i}\}.
   $$
   Then 
   \begin{equation}\label{e1l2}
   \be(t_i)\in C_{r_i}\quad\text{and} \quad 
   \be(]0,t_i[)\subset B^\circ_{r_i}.
   \end{equation}
   The set $V_1$ is defined in the following way. 
   By hypothesis $r_1<\de$, then  $B_{r_1}\cap \partial D=\emptyset$.
   We have that 
   $\be(t_0)\notin B_{r_1}$ and $\be(]0,t_1[)\subset B^\circ_{r_1}$.
   Using Claim~\ref{c24}, let $\xi_1$ be a component of $C_{r_1}\cap U$ for which 
   $U-\xi_1$ is the disjoint union $V_1\cup W_1$ of two open
   connected subsets of $U$ with $\be(]0,t_1[)\subset V_1$ and $\be(t_0)\in W_1$.
   Also $\be(t_0)\notin cl_UV_1$
    because $\be(t_0) \notin B_{r_1}\supset C_{r_1}\supset fr_U V_1$ 
    and $\be(t_0)\notin V_1$.
    
    Suppose now that we have defined $V_1\supset V_2\supset \cdots \supset V_n$
    with the following properties for $1\le i\le n$:
    \begin{enumerate}   
    \iitem\label{ul1} $\be(]0,t_i[)\subset V_i$.
    \iitem\label{ul2} $fr_UV_i=\xi_i$ is a connected component of $C_{r_i}\cap U$.
    \iitem\label{ul3} $\be(t_{i-1})\notin cl_U V_i$.
    \iitem\label{ul4} $\be(t_0)\notin V_i$.
    \end{enumerate}
    The previous paragraph shows that \eqref{ul1}, \eqref{ul2}, \eqref{ul3}, \eqref{ul4} hold for $i=1$.
    
    By \eqref{e1l2} we can apply the claim to $\be(]0,t_{n+1}[)$ and $\be(t_n)$.
    Let $\xi_{n+1}$ be a component of $C_{r_{n+1}}\cap U$
     for which $U-\xi_{n+1}$ is the disjoint union $V_{n+1}\cup W_{n+1}$
     of two open connected subsets of $U$ with $\be(]0,t_{n+1}[)\subset V_{n+1}$ 
     and $\{\be(t_n),\,\be(t_0)\}\subset W_{n+1}$.
      Observe that the choice of $V_{n+1}$ and $W_{n+1}$
     implies that $fr_U V_{n+1}=fr_U W_{n+1}=\xi_{n+1}$.
     Then $\be(t_n)\notin V_{n+1}$ and $\be(t_n)\notin fr_UV_{n+1}$ because
     $fr_UV_{n+1}=\xi_{n+1}\subset C_{r_{n+1}}$ and $\be(t_n)\in C_{r_n}$.
     Therefore $\be(t_n)\notin cl_U V_{n+1}$ and $V_{n+1}$
     satisfies \eqref{ul1},  \eqref{ul2},  \eqref{ul3}, \eqref{ul4} above.

     We  show now that $V_{n+1}\subset V_n$.
     We have that $\be(]0,t_{n+1}[)\subset V_n\cap V_{n+1}$. 
     Therefore $V_n$ and $V_{n+1}$ satisfy hypothesis \eqref{l11}
     of lemma~\ref{L1}.
     Also $\xi_n\cap \xi_{n+1}=\emptyset$ and then hypothesis \eqref{l14} also holds.
     Since $\be(t_n)\notin cl_U V_{n+1}$ we have that $\be(]t_n-\de,t_n[)\cap V_{n+1}=\emptyset$
     for $\de$ small enough. Therefore $\be(]t_n-\de,t_n[)\subset V_n-V_{n+1}$
     and \eqref{l12} holds.
      By \eqref{ul4} we have that $\be(t_0)\notin V_n\cup V_{n+1}$
      and then conclusion
     \eqref{l1b} of Lemma~\ref{L1} does not hold.
      Therefore  hypothesis \eqref{l13} does not hold and then $V_{n+1}\subset V_n$.
      
     Thus the chain $(V_n)$ is well defined. Since $fr_U V_n\subset C_{r_n}$, we have that 
     $fr_U V_n\to p$ and $(V_n)$ is prime. 
     Let $e'$ be the prime end it defines.
     Since $\be(]0,t_n[)\subset V_n$ for every $n$
     we have that $\lim_{t\to 0} \be(t)=e'$ in $\hU$.
     
     Hence $e=(V_n)$. This  proves part \eqref{l2a} of the lemma.
  
       As for part \eqref{l2b}, if we perform      
       the previous construction with 
       the new sequence $s_i=r_i$ if 
       $i\ne n+1$ and $s_{n+1}=\rho$,
       then $(t_i)$, $(\xi_i)$ and $(V_i)$
       remain the same except for $i=n+1$,
       when we obtain an open subset $W$ of $U$
       such that $fr_UW$ is an arc of 
       $C_\rho\cap U$ and $V_{n+2}\subset W\subset V_n$.
      
   \end{proof}

   \subsection{Cartwright-Littlewood's Theorem.}\quad
   
   Let $f:S\to S$ be an area preserving orientation 
   preserving homeomorphism and $U$
   an invariant domain with $b_IU$ finite.
   If we denote by $f_U$ the restriction of $f$ to $U$,
   then $f_U$ has a unique extension $f^*:U^*\to U^*$, 
   which is also an orientation preserving homeomorphism.
   Points in $b_IU$ are periodic. 
   Since $f_U$ maps irreducible chains to irreducible chains,
   $f_U$ also extends to an orientation preserving homeomorphism 
   $\hf:\hU\to\hU$
   and Caratheodory circles are permuted in the same way as their associated 
   regular ideal boundary points.
   
   Suppose now that $U$ is periodic of period $n$. 
   Let $C$ be a Caratheodory circle and let  $k$ be the 
   the smallest positive number such that $f^{nk}C=C$.
   Define the rotation number of $U$ at $C$ as the 
   rotation number of $(\widehat{f^n})^k$ restricted to $C$.
   
   \begin{Lemma}\label{L3}\quad
   
   Let $e$ be a fixed prime end of $U$,
   $p$ a principal point of $e$ 
   and $(V_i)$ a chain defining $e$ such that $fr_U(V_i)\to p$.
   Let $i_0=\inf\{\,i\ge 1\,|\, fV_i\subset V_1\,\}$. Then  
   \begin{equation*}\label{l31}
   \forall i\ge i_0, 
   \qquad fr_U(V_i)\cap fr_U(fV_i)\ne\emptyset.
   \end{equation*} 
   
   \end{Lemma}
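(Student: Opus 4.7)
My plan is to argue by contradiction using Lemma \ref{L1}. The starting point is that $e=\hat f(e)$, and since $f|_U$ is a homeomorphism preserving the chain axioms \eqref{chn1}--\eqref{chn2}, the image $(fV_i)$ is also a chain representing $\hat f(e)=e$. Hence $(V_i)$ and $(fV_i)$ are equivalent chains, which will give me the mutual interleaving I need.

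Fix $i\ge i_0$ and assume toward a contradiction that $fr_U(V_i)\cap fr_U(fV_i)=\emptyset$. I would apply Lemma \ref{L1} with $A_1=V_i$ and $A_2=fV_i$; these are open connected proper non-empty subsets of $U$ (for propriety, use that $V_i\subsetneq U$ since $fr_U(V_i)\ne\emptyset$, and $fV_i\subset V_1\subsetneq U$ by definition of $i_0$ together with the nesting $V_i\subset V_{i_0}$). The conclusion of Lemma \ref{L1} would then read $U=V_i\cup fV_i\subset V_1$, contradicting $V_1\subsetneq U$. Of the four hypotheses of Lemma \ref{L1}, \eqref{l14} is immediate: $fr_U(fV_i)=f(fr_U(V_i))$ is connected as the homeomorphic image of a connected set, and the two frontiers are disjoint by assumption. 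Hypothesis \eqref{l11} follows from equivalence of chains: pick $j\ge i$ with $V_j\subset fV_i$, so $\emptyset\ne V_j\subset V_i\cap fV_i$.

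The main obstacle will be verifying \eqref{l12} and \eqref{l13}, namely that neither of $V_i$, $fV_i$ is contained in the other. This is where I invoke area preservation. Assume $V_i\subsetneq fV_i$ toward a contradiction. Then $\mu(fV_i\setminus V_i)=\mu(fV_i)-\mu(V_i)=0$. But the interior in $U$ of $fV_i\setminus V_i$ equals the open set $fV_i\setminus cl_U(V_i)$, which must therefore be empty (else it would have positive measure, since $\mu$ is positive on non-empty open sets). Hence $fV_i\subset cl_U(V_i)$. Furthermore, $V_i\subset fV_i$ gives $fr_U(V_i)\subset cl_U(V_i)\subset cl_U(fV_i)=fV_i\cup fr_U(fV_i)$, and since $fr_U(V_i)$ is connected and disjoint from $fr_U(fV_i)$ by our contradiction assumption, in fact $fr_U(V_i)\subset fV_i$. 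Putting these together yields $fV_i=V_i\cup fr_U(V_i)=cl_U(V_i)$, so $fV_i$ is simultaneously open and closed in $U$, a proper non-empty clopen subset of the connected surface $U$ --- contradiction. The symmetric argument, swapping the roles of $V_i$ and $fV_i$, rules out $fV_i\subsetneq V_i$. With all four hypotheses of Lemma \ref{L1} secured, the contradiction to $V_1\subsetneq U$ follows, completing the proof.
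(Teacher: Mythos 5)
Your proof is correct and uses exactly the same ingredients as the paper's: the equivalence of the chains $(V_i)$ and $(fV_i)$, Lemma~\ref{L1}, area preservation, and connectedness of $U$. The only difference is organizational: you verify all four hypotheses of Lemma~\ref{L1} and then contradict its conclusion $U=V_i\cup fV_i\subset V_1$, whereas the paper verifies only \eqref{l11} and \eqref{l14}, observes that conclusion \eqref{l1b} fails because $V_i\cup fV_i\subset V_1\subsetneq U$, deduces that one of the inclusions $V_i\subset fV_i$ or $fV_i\subset V_i$ must hold, and derives the measure contradiction $\mu(fV_i)<\mu(V_i)$ from that inclusion using frontier-disjointness, connectedness, and positivity of $\mu$ on open sets --- the same steps you use, in a dual order. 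One tiny cleanup: to rule out \eqref{l12} failing you must also exclude $V_i=fV_i$, but this is immediate since it would force $fr_U(V_i)=fr_U(fV_i)$, contradicting the standing assumption that these (nonempty) frontiers are disjoint.
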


   \begin{proof}\quad
   
   We have that 
   $(fV_i)$ is a chain that represents $\hf(e)$.
   Since $e$ is a fixed point of $\hf$, we have that 
   $(fV_i)$ and $(V_i)$ are equivalent.
   
   Let $i_0$ be such that $fV_i\subset V_1$ for $i\ge i_0$. 
   We are going to assume that $fr_U(V_i)\cap fr_U(fV_i)=\emptyset$
   for some $i\ge i_0$ and apply Lemma~\ref{L1} to $V_i$ 
   and $fV_i$ to obtain a contradiction.
   
   So we are assuming that hypothesis~\eqref{l14} of Lemma~\ref{L1} holds.
   The equivalence of $(V_i)$ and $(fV_i)$ implies that 
   $V_i\cap fV_i\ne \emptyset$ for every $i\ge 1$ and hypothesis \eqref{l11}
   also holds.
   We have that $V_i\subset V_1$ for $i\ge 1$ and therefore
   $V_i\cap fV_i\subset V_1$ for $i\ge i_0$.
   Since $V_1$ is strictly contained in $U$, we have that conclusion~\eqref{l1b}
   does not hold.
   
 This implies  that hypotheses \eqref{l12} and~\eqref{l13} can not both hold.
 So we have that $V_i\subset fV_i$ or $fV_i\subset V_i$. 
 We are going to assume that $fV_i\subset V_i$ and show that 
 $\mu(fV_i)<\mu(V_i)$. The other case is analogous.
 
 Since $fr_U(fV_i)$ is connected and 
 \begin{equation}\label{l32}
 fr_U(fV_i)\cap fr_U V_i=\emptyset,
 \end{equation}
  we have that 
 $fr_U(fV_i)\subset V_i$ or 
 $fr_U(fV_i)\subset U-V_i$. 
 We claim that the case
 \linebreak
  $fr_U(fV_i)\subset U-V_i$ can not happen.
 If so, by equality   \eqref{l32}  we would have  that 
 \linebreak
 $fr_U(fV_i)\subset U-cl_U V_i$.
 If $x\in fr_U(fV_i)$ then every neighborhood $W$
 of $x$ would 
 \linebreak
 contain points of $fV_i\subset V_i$. 
 On the other hand we could choose $W$ disjoint
 from $cl_U V_i$, a contradiction.
 
 Therefore $fr_U(fV_i)\subset V_i$, implying that 
 $cl_U(fV_i)\subset V_i$. 
 Since $fr_U (V_i)\to p$, $V_i\ne U$.
 If we had $cl_U(fV_i)=V_i$ then $V_i$ 
 would be open and closed at the same time, 
 contradicting the connectivity of $U$.
 Hence $V_i-cl_U(fV_i)\ne \emptyset$.
 Since $\mu$ is positive on open sets,
 we have that $\mu(cl_U(fV_i))<\mu(V_i)$.
 A contradiction.

   \end{proof}

 Next we present Cartwright and Littlewood's fixed point theorem.
 
 \begin{Theorem}[Cartwright and Littlewood~\cite{CL}]\label{T5}\quad
 
 Let $f:S\to S$ be an area preserving orientation 
   preserving homeomorphism and $U$
   an invariant domain with $b_IU$ finite.
   Let $e$ be a fixed prime end of $U$ and $p$ a principal point of $e$.
 Then $f(p)=p$.
 \end{Theorem}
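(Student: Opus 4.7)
The plan is to reduce the theorem immediately to Lemma~\ref{L3} plus a short continuity argument. Since $p$ is a principal point of the fixed prime end $e$, the definition of principal point furnishes a chain $(V_i)$ representing $e$ with $fr_U(V_i)\to p$. Applying Lemma~\ref{L3} to this chain, for every $i\ge i_0$ I can select
\[
x_i\in fr_U(V_i)\cap fr_U(fV_i).
\]

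Next I extract the fixed point by reading off two limits of the same sequence. On the one hand, $x_i\in fr_U(V_i)$ and $fr_U(V_i)\to p$ (meaning every neighborhood of $p$ eventually contains the frontier), so $x_i\to p$ in $S$. On the other hand, because $f$ is a homeomorphism of $S$ with $fU=U$, it restricts to a self-homeomorphism of $U$ and therefore maps $fr_U(V_i)$ onto $fr_U(fV_i)$. Hence $y_i:=f^{-1}(x_i)\in fr_U(V_i)$, so $y_i\to p$ by the same argument, and continuity of $f$ yields $x_i=f(y_i)\to f(p)$. Uniqueness of limits in the Hausdorff space $S$ forces $f(p)=p$.

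All of the genuine content has been absorbed into Lemma~\ref{L3}, which is the only place where the area-preserving hypothesis enters: it is used, via the inequality $\mu(fV_i)<\mu(V_i)$, to exclude the strict nesting $fV_i\subsetneq V_i$ (and symmetrically $V_i\subsetneq fV_i$), so that Lemma~\ref{L1} can be applied to produce the required frontier intersection. Consequently the main obstacle is not in this statement but in that lemma; granted it, the present deduction is essentially automatic. The only bookkeeping item worth checking carefully is that $f$ really does carry $fr_U(V_i)$ onto $fr_U(fV_i)$, which is immediate from $f$ being a homeomorphism of $S$ together with the invariance $fU=U$.
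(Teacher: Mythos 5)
Your argument is correct and follows essentially the same route as the paper: invoke Lemma~\ref{L3} to find points in $fr_U(V_i)\cap fr_U(fV_i)$ for large $i$, pull back by $f^{-1}$ using that $f$ restricts to a self-homeomorphism of $U$, and conclude $f(p)=p$ by taking limits along $fr_U(V_i)\to p$. The only difference is cosmetic: the paper names the point $x_i\in fr_U(V_i)$ with $f(x_i)\in fr_U(V_i)$, whereas you start from the intersection point and set $y_i=f^{-1}(x_i)$, but the underlying mechanism is identical.
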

 
 \begin{proof}
 Let $(V_i)$ be a chain defining $e$ such that $fr_U (V_i)\to p$.
 From Lemma~\ref{L3} there exists $i_0$ such that 
  for  $i\ge i_0$  there exists a point $x_i\in fr_U(V_i)$
  such that $f(x_i)$ 
 also belongs to $fr_U(V_i)$.
 Since $fr_U(V_i)\to p$, we have that $x_i\to p$ and $f(x_i)\to p$,
 implying that $f(p)=p$.
 \end{proof}

\section{No elliptic points from periodic prime ends.}
\label{S3}

Now we start the proof of item~\eqref{t21}   of Theorem~\ref{T2}.
We have that  $b=\a(e)\in b_IU$ is a regular ideal boundary point,
 $e$ is a prime end that belongs to the Caratheodory circle 
 associated with $b$ and $\hf(e)=e$. 
  
 Let $p$ be a principal point of $e$.
 By Theorem~\ref{T5}, $f(p)=p$.
 Our hypothesis that fixed points of $f$ contained in $fr_SU$
 are non degenerate implies that they are isolated.
 Therefore the principal set  of $e$ reduces to $\{p\}$ and hence 
 \begin{equation}\label{accessible}
 \text{$e$
 is accessible.}
 \end{equation}
 
 \subsection{The fixed point $p$ can not be elliptic.}\quad
 
 We are going to assume that $p$ is elliptic and obtain a contradiction.
 
 Let $V$ be a neighborhood of $p$ where there exist continuous coordinates 
 with $p$ at the origin and in which $f$ is differentiable at $p$ and $df_p$ 
 is a rotation by an angle $\a$ different from zero.
 
 We  write the coordinates in the plane as complex numbers.
 By hypothesis we have that $f'(0)=e^{i\a}$ with $0<\a<2\pi$.
 
 Since $Z(b)$ is connected and contains more than one point,
 there exists a closed ball $B$ centered at $0$ such that $Z(b)$ is not contained in $B$.
 
 We are going to construct a closed curve $\xi$ satisfying three conditions:
 \begin{itemize}
 \iitem $\xi\subset U$,
 \iitem $\xi\subset B^\circ$,
 \iitem If $B'$ is the component of $S-\xi$ which contains $p$, then 
 $B'$ is homeomorphic to an open disk whose closure is contained
 in $B^\circ$ and $fr_S B'\subset \xi$.
 \end{itemize}
 
 Since $p\in X(e)$ and $X(e)\subset Z(b)$, we have that 
 $Z(b)\cap B'\ne\emptyset$. On the other hand, since
 $B'\subset B$ and $Z(b)$ is not contained in $B$, we have that 
 $Z(b)\cap (S-B')\ne \emptyset$.
 Being connected $Z(b)$ must intersect $fr_SB'$,
 a contradiction since $Z(b)\subset fr_S(U)$
 and $fr_S B'\subset \xi\subset U$.
 
 Let $B^\circ_\de$ be the open ball of radius $\de$ centered at the origin
 with $\de$ small enough so that $f(B^\circ_\de)\subset V$.
 For points $z$ close to $0$ we will need to estimate the distance
 between iterates of $z$ under $f$ and $f'(0)$.
 
 \begin{Lemma}\label{L4}\quad
 
 Let $w$ and $w'\in\co$ and $\e<1$ be such that 
 $|w'-w|<\e\,|w|$. 
 If $w=r e^{i\eta}$ and $r'=|w'|$ 
 then there exists a unique real number $\eta'$ such that 
 \begin{enumerate}
 \iitem\label{l41} $w'=r' e^{i\eta'}$,
 \iitem\label{l42} $|r'-r|<\e\, r$,
 \iitem\label{l43} $|\eta'-\eta|<\tfrac\pi 2 \,\e$.
 \end{enumerate}

 \end{Lemma}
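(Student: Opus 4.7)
The plan is to handle (1), (2), and uniqueness of $\eta'$ by direct estimates, then focus on (3), which carries the geometric content.

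For (2), the reverse triangle inequality gives
$|r' - r| = \bigl| |w'| - |w| \bigr| \le |w' - w| < \e\, r$.
For (1), the hypothesis $\e < 1$ forces $r' \ge r - |w' - w| > (1 - \e)\, r > 0$, so $w' \ne 0$ admits a polar representation $r'\, e^{i\eta'}$ with $\eta'$ determined modulo $2\pi$. Uniqueness of the $\eta'$ satisfying all three conditions will follow once (3) is established: the constraint $|\eta' - \eta| < \tfrac\pi 2\,\e < \tfrac\pi 2$ confines $\eta'$ to an open interval of length less than $\pi$, and distinct representatives of $\arg w'$, which differ by multiples of $2\pi$, cannot both lie there.

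The main step (3) is a plane-geometry argument. I would normalize by dividing through by $w$: the hypothesis reads $|w'/w - 1| < \e$, so the quotient $w'/w$ lies in the open disk $D \subset \co$ of radius $\e$ about $1$, and $D$ sits in the open right half-plane because $\e < 1$. Thus $w'/w$ has a well-defined principal argument $\theta \in (-\tfrac\pi 2, \tfrac\pi 2)$, and setting $\eta' := \eta + \theta$ gives $w' = r'\, e^{i\eta'}$ with $\eta' - \eta = \theta$. Since the two tangent lines from the origin to $D$ make angles $\pm \arcsin \e$ with the positive real axis, every point of $D$ has principal argument of modulus strictly less than $\arcsin \e$, so $|\eta' - \eta| < \arcsin \e$.

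To close the proof, I would invoke the elementary inequality $\arcsin x \le \tfrac\pi 2\, x$ on $[0, 1]$, which follows from convexity of $\arcsin$ on this interval together with the fact that $y = \tfrac\pi 2\, x$ is precisely the chord from $(0, 0)$ to $(1, \tfrac\pi 2)$. Evaluated at $x = \e$, this yields $|\eta' - \eta| < \arcsin \e \le \tfrac\pi 2\, \e$, establishing (3). The argument is wholly elementary; the only real care needed is bookkeeping the argument modulo $2\pi$ and tracking which inequalities are strict so that the final bound in (3) emerges strict as stated.
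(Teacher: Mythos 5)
Your proof is correct and takes essentially the same route as the paper's: both bound $|\eta'-\eta|$ by the angle $\arcsin\e$ of the tangent ray from the origin to the disk of radius $\e|w|$ about $w$ (you just normalize by $w$ first, placing the disk around $1$), and both then invoke the Jordan-type estimate $\arcsin\e\le\tfrac\pi2\e$, which the paper states as $\tfrac2\pi\nu<\sin\nu$ for $\nu=\arcsin\e$. Your convexity argument for that last inequality and your explicit remarks on uniqueness are fine and merely spell out steps the paper leaves implicit.
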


\begin{proof}\quad

Let $B_r$ be the closed ball of radius $\e r$ with center at $w$.
Then $w'$ belongs to the interior of $B_r$ and $0\notin B_r$.

Let $\nu$ be the angle between the ray starting at $0$ 
through $w$ and one of the two rays starting at $0$ and tangent to $B_r$.
Since $0\notin B_r$, we have that $\nu<\frac \pi 2$.
Then there is a unique real number $\eta'$ such that $w'=r' e^{i\eta'}$ and 
$|\eta'-\eta|<\nu$.
For $0<\nu<\frac \pi 2$ we have that $\frac 2\pi\nu<\sin\nu$. But $\sin\nu=\e$. 
Therefore $|\eta'-\eta|<\frac\pi 2\e$ which proves 
\eqref{l41} and \eqref{l43}. Item \eqref{l42} is just the triangle inequality.

\end{proof}

For $\e\in]0,1[$ let $\de>0$ be such that if $|z|<\de$ then 
$|f(z)-f'(0)z|<\e\,|z|$.

\begin{Lemma}\label{L5}\quad

Let $z=r e^{i\th}$ with $r<\de$ and let  $r'=|f(z)|$. 
Then there exists a unique real number $\th'$ such that 
\begin{enumerate}
\iitem\label{l51}
$f(z)=r' e^{i\th'}$,
\iitem\label{l52}
$|r'-r|<\e\, r$,
\iitem\label{l53}
$|\th'-(\th+\a)|<\frac\pi 2 \e$.
\end{enumerate}

\end{Lemma}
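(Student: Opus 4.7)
The plan is to reduce Lemma~\ref{L5} directly to Lemma~\ref{L4} by taking $w=f'(0)z$ and $w'=f(z)$.

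First I would unpack the hypotheses. Since $f'(0)=e^{i\a}$, multiplication by $f'(0)$ is a rotation by $\a$, so
$$w:=f'(0)z = e^{i\a}\cdot r e^{i\th} = r\,e^{i(\th+\a)}.$$
In particular $|w|=r$ and the argument of $w$ is $\th+\a$. Next, the choice of $\de$ (made just before the lemma) says that if $r=|z|<\de$, then
$$|f(z)-f'(0)z|<\e\,|z|,$$
which in the new notation is exactly
$$|w'-w|<\e\,|w|,$$
where $w':=f(z)$. This is precisely the hypothesis of Lemma~\ref{L4}, with the ``$\eta$'' of that lemma being $\th+\a$.

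Applying Lemma~\ref{L4} then produces a unique real number, which I will call $\th'$, such that $w'=r'\,e^{i\th'}$ with $r'=|w'|$, $|r'-r|<\e\,r$, and $|\th'-(\th+\a)|<\tfrac\pi2\,\e$. Translating back via $w'=f(z)$ and $r'=|f(z)|$ gives items~\eqref{l51}, \eqref{l52} and~\eqref{l53} respectively, and uniqueness of $\th'$ is inherited from the uniqueness clause of Lemma~\ref{L4}. There is no real obstacle here; the only thing to check carefully is that $\e<1$ (so that Lemma~\ref{L4} applies and in particular $0\notin$ the ball of uncertainty), which is guaranteed by the standing assumption $\e\in\,]0,1[$ made just before the statement. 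Thus Lemma~\ref{L5} is a direct corollary of Lemma~\ref{L4} applied to the pair $(f'(0)z,\,f(z))$.
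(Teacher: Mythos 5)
Your proof is correct and is essentially identical to the paper's: both set $w=f'(0)z$, $w'=f(z)$, observe $|w'-w|<\e|w|$ from the choice of $\de$, and invoke Lemma~\ref{L4} with $\eta=\th+\a$. No further comment is needed.
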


\begin{proof}\quad

Let $w'=f(z)$ and $w=f'(0)z$. Then $w=r e^{i(\th+\a)}$ and $|w'-w|<\e\, |w|$.
Let $\eta=\th+\a$. Lemma~\ref{L4} provides a unique number $\th'=\eta'$ for 
which~\eqref{l51}, \eqref{l52} and \eqref{l53} are satisfied.

\end{proof}

By Lemma~\ref{L5}, we have that
 $F:\re\times]0,\de[\to\re\times]0,+\infty[$ defined by $F(\th,r)=(\th',r')$
 is a lifting of $f:B_\de-\{0\}\to\co$ with the property that $|r-r'|<\e r$
 and $|\th'-(\th+\a)|<\frac\pi 2\e$ for every $(\th,r)\in\re\times]0,\de[$. 
 We are going to use $(\th_j,r_j)=F^j(\th_0,r_0)$ to denote iterates
 of a point $(\th_0,r_0)$.
 
 Now we start the construction of $\xi$.
 
 Let $B$ be a closed ball centered at $0$ such that $Z(b)$ is not contained in $B$.
 
 Let $n$ be such that $n\a>2\pi$ and $n(2\pi-\a)>2\pi$.
 
 Let $\e\in]0,1[$ be such that 
 \begin{gather}
 n \tfrac\pi 2 \e < n\a - 2\pi,
 \label{e3}
 \\
 n\tfrac \pi 2 \e < n(2\pi-\a) -2\pi,
 \label{e4}
 \\
 \a+\tfrac \pi 2 \e < 2\pi \qquad\text{and}\qquad \a-\tfrac\pi 2\e >0.
 \label{e5}
 \end{gather}
 
 Let $\de>0$ be such that $B_\de\subset B^\circ$,
 $f(B^\circ_\de)\subset V$ and if $|z|<\de$ then 
 $|f(z)-f'(0)z|<\e|z|$.
 
 If $r_0$ satisfies $r_0(1+\e)^n<\de$, then from \eqref{l52} of Lemma~\ref{L5} 
 we have that 
 \begin{equation}\label{e55}
 r_j<r_0\,(1+\e)^j<\de
 \end{equation}
  and 
 $F^j:\re\times]0,\de[\to\re\times]0,+\infty[$ is well defined for 
 $1\le j\le n$.
 From \eqref{l53} of Lemma~\ref{L5} we have that 
 \begin{equation}\label{e6}
 |\th_j-(\th_0+j\a)|<j\tfrac\pi 2\e
 \qquad \text{for}\quad 1\le j\le n.
 \end{equation}
 
 By Lemma~\ref{L2} there exists a chain $(V_i)$ representing $e$
 such that $fr_U V_i\to p$ and the sets $fr_U V_i$ 
 are all contained in circles with center at $0$.
 For $i$ large enough let $\be=fr_U V_i$ 
 be an arc of circle of radius $r_0$ such that
 $r_0(1+\e)^n<\de$.
 
 Observe that $e$ is a fixed prime end of both $f$ and $f^n$.
 From Lemma~\ref{L3} we may take $i$ large enough so that 
 $f(\be)\cap \be\ne\emptyset$ and $f^n(\be)\cap\be\ne \emptyset$.
 
 We have that $f^{i-1}(\be)\cap f^i(\be)\ne\emptyset$ for $1\le i\le n$ 
 and $f^n(\be)\cap\be\ne \emptyset$.
 We are going to show that these arcs turn around $p$ at least
 once to close a curve with $p$ inside.
 
 Let $\hbe:=]a,b[\times\{r_0\}$ be a lifting of $\be$ with $0\le a<2\pi$.
 We have that $b-a<2\pi$.
 
 \begin{Lemma}\label{L6}\quad
 $F\hbe\cap\hbe\ne\emptyset$ 
 \quad or \quad
 $F\hbe\cap(\hbe+(2\pi,0))\ne\emptyset$.
 \end{Lemma}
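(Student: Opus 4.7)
The plan is to deduce the lemma from $f(\be)\cap\be\ne\emptyset$, which is the reason the large $i$ was chosen via Lemma~\ref{L3}, by lifting an intersection point into $\re\times\,]0,\infty[$ and then using the angular estimate in Lemma~\ref{L5} together with $b-a<2\pi$ and \eqref{e5} to bound how many full turns $F\hbe$ can make relative to $\hbe$.

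First I would pick a point $z\in f(\be)\cap\be$. Since $z\in\be$ lies on the circle of radius $r_0$, I can write $z=r_0\,e^{i\th_0}$ with a unique $\th_0\in[a,b]$ (unique because $b-a<2\pi$). Since $z\in f(\be)$, there is $\th_1\in\,]a,b[$ such that $f(r_0\,e^{i\th_1})=z$. Writing $F(\th_1,r_0)=(\th_1',r_1')$, the equation $r_1'\,e^{i\th_1'}=z=r_0\,e^{i\th_0}$ forces $r_1'=r_0$ and $\th_1'=\th_0+2\pi k$ for a unique $k\in\Z$. This produces the intersection point $(\th_1',r_0)\in F\hbe\cap(\hbe+(2\pi k,0))$, so it only remains to prove $k\in\{0,1\}$.

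For this I use the angular estimate $|\th_1'-(\th_1+\a)|<\tfrac\pi 2\e$ from \eqref{l53} of Lemma~\ref{L5}, combined with $\th_1\in\,]a,b[$, $\th_0\in[a,b]$, $b-a<2\pi$ and \eqref{e5}. The upper bound reads
\[
\th_1'-\th_0 \;<\; b+\a+\tfrac\pi 2\e-a \;<\; (b-a)+(\a+\tfrac\pi 2\e) \;<\; 2\pi+2\pi \;=\; 4\pi,
\]
giving $k\le 1$; the lower bound reads
\[
\th_1'-\th_0 \;>\; a+\a-\tfrac\pi 2\e-b \;>\; -(b-a)+(\a-\tfrac\pi 2\e) \;>\; -2\pi+0 \;=\; -2\pi,
\]
giving $k\ge 0$. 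Hence $k\in\{0,1\}$, as required.

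The one subtlety I foresee is the boundary case in which $z$ is an endpoint of the closed arc $\be=fr_U V_i$, so that $\th_0\in\{a,b\}$ lies on the frontier of the open lift $\hbe$ rather than in its interior. This is easily dispatched by invoking \eqref{l2b} of Lemma~\ref{L2} to replace $r_0$ by a slightly different $\rho$ for which the intersection point falls in the interior of the new arc, or equivalently by running the argument with the closed lift and noting that the conclusion is unchanged. Beyond this minor bookkeeping I do not anticipate a hard step: the geometric heart is simply that a rotation by an angle $\a\in(0,2\pi)$ cannot shift an arc of angular length less than $2\pi$ by more than one full turn, which is precisely what \eqref{e5} encodes.
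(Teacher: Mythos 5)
Your proof is correct and follows essentially the same route as the paper: lift an intersection point of $f\be\cap\be$ to find $F\hbe\cap(\hbe+(2k\pi,0))\ne\emptyset$ for some $k$, then use the angular estimate from Lemma~\ref{L5} together with $b-a<2\pi$ and \eqref{e5} to pin down $k\in\{0,1\}$; the paper writes the final inequality symmetrically as $|2k\pi-\a|<2\pi+\tfrac\pi2\e$, while you split it into upper and lower bounds, but the arithmetic is identical. (Your worry about endpoints is moot: $\hbe=\,]a,b[\,\times\{r_0\}$ is an open arc, so any $z\in f\be\cap\be$ already has $\th_0\in\,]a,b[$.)
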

 
 \begin{proof}
 Since $f\be\cap \be \ne \emptyset$, we have that 
 $F\hbe\cap (\hbe+(2k\pi,0))\ne\emptyset$ 
 for some $k\in\Z$. 
 We are going to show that $k=0$ or $1$.
 
 Let $(\th_1,r_1)\in F\hbe\cap (\hbe+(2k\pi,0))$.
 Then $(\th_1,r_1)=F(\th_0,r_0)$ and 
 $\th_1=\ov\th_0+2k\pi$,
 where both $\th_0$ and $\ov{\th_0}$ belong to $]a,b[$.
 By \eqref{e6} we have that 
 $|\th_1-(\th_0+\a)|<\tfrac \pi 2\e$.
 Therefore
 \begin{gather*}
 |\ov\th_0+2k\pi -(\th_0+\a)|<\tfrac\pi 2 \e,
 \\
 |2k\pi-\a|<|\th_0-\ov\th_0|+\tfrac\pi 2\e<2\pi+\tfrac\pi 2\e,
 \\
 \text{and}\qquad
 -2\pi+\a-\tfrac \pi 2\e
 <2k\pi
 <2\pi+\a+\tfrac\pi 2 \e.
 \end{gather*}
 By \eqref{e5} we have that 
 $-2\pi<2k\pi<4\pi$ and $k=0$ or $1$.
 
 \end{proof}
 
 \begin{Lemma}\label{L7}
 \quad
 $F^n\hbe\cap(\hbe+(2k\pi,0))\ne\emptyset$ \quad for some \quad $k\ge 1$.
 
 \end{Lemma}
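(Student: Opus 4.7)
The plan is to mimic the proof of Lemma~\ref{L6} one step at a time, replacing $F$ by $F^n$ and invoking the stronger smallness condition \eqref{e3} in place of \eqref{e5} at the end. The key input is that, since $e$ is a fixed prime end of $f^n$ as well as of $f$, applying Lemma~\ref{L3} to $f^n$ lets us choose $i$ large enough that $f^n(\be)\cap\be\ne\emptyset$ (this is precisely how the arc $\be$ was selected just before Lemma~\ref{L6}). Lifting this intersection to the universal cover yields some $k\in\Z$ and a point $F^n(\th_0,r_0)=(\th_n,r_n)$ with $\th_0,\ov\th_0\in\,]a,b[$, $r_n=r_0$, and $\th_n=\ov\th_0+2k\pi$. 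The content of the lemma reduces to showing that this $k$ is strictly positive, i.e. that in $n$ iterates $F$ has accumulated at least one full turn.

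To control $k$, I would apply estimate \eqref{e6} at $j=n$ to get
\[
|\th_n-(\th_0+n\a)|<n\tfrac\pi2\e,
\]
and combine it with $|\th_0-\ov\th_0|\le b-a<2\pi$ via the triangle inequality to obtain
\[
|2k\pi-n\a|\le|\ov\th_0-\th_0|+|\th_n-(\th_0+n\a)|<2\pi+n\tfrac\pi2\e.
\]
In particular $2k\pi>n\a-2\pi-n\tfrac\pi2\e$, and the right-hand side is strictly positive by the choice of $\e$ in \eqref{e3}. Hence $k\ge 1$, as required.

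The main point to arrange beforehand, rather than a real obstacle inside the lemma, is that $F^n$ be defined on all of $\hbe$: this is guaranteed by \eqref{e55}, which ensures $r_j<\de$ for $0\le j\le n$ as long as $r_0(1+\e)^n<\de$, a condition already built into the choice of $\be$. Beyond that the argument is a direct inequality chase identical in spirit to Lemma~\ref{L6}, tuned to extract the positive lower bound on $k$ from the fact that $n\a$ lies strictly above $2\pi$ by more than the accumulated angular error $n\tfrac\pi2\e$.
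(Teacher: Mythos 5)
Your proof is correct and follows essentially the same route as the paper's: lift the intersection $f^n\be\cap\be\ne\emptyset$ to get $\th_n=\ov\th_0+2k\pi$, apply estimate \eqref{e6} at $j=n$, and use $|\th_0-\ov\th_0|<2\pi$ together with condition \eqref{e3} to force $2k\pi>0$. The only cosmetic difference is that you first bound $|2k\pi-n\a|$ symmetrically and then extract the one-sided inequality, whereas the paper writes the one-sided chain directly; the content is identical.
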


\begin{proof}
\quad

Since $f^n\be\cap \be\ne\emptyset$ we have that 
$F^n\hbe\cap(\hbe+(2k\pi,0))\ne\emptyset$
for some $k\in \Z$.

Let $(\th_n,r_n)\in F^n\hbe\cap(\hbe+(2k\pi,0))$.
Then $(\th_n,r_n)=F^n(\th_0,r_0)$ 
and $\th_n=\ov\th_0+2k\pi$ 
where both
$\th_0$ and $\ov\th_0$ belong to $]a,b[$.
By~\eqref{e6}, we have that 
$|\th_n-(\th_0+n\a)|<n\frac\pi 2 \e$.
Therefore
\begin{gather*}
|\ov\th_0+2k\pi-(\th_0+n\a)|<n\tfrac\pi 2\e,
\\
2k\pi+\ov\th_0-\th_0-n\a>-n\tfrac\pi 2\e \qquad\text{and }
\\
2k\pi>\th_0-\ov\th_0+n\a-n\tfrac\pi 2 \e > -2\pi+n\a-n\tfrac \pi 2 \e>0
\quad\text{by }\eqref{e3}.
\end{gather*}
Hence $k\ge 1$.

\end{proof}

In the case when 
$F\hbe\cap\hbe\ne\emptyset$ we construct $\xi$ in the following way.

We have $F^{j+1}\hbe \cap F^j\hbe\ne\emptyset$ for $1\le j\le n-1$ 
and by Lemma~\ref{L7}, $F^n\hbe\cap(\hbe+(2k\pi,0))\ne\emptyset$ 
for some $k\ge1$.
This implies that the union 
$\hbe\cup F\hbe\cup\cdots\cup F^n\hbe\cup(\hbe+(2k\pi,0))$
is path connected and therefore this union contains a path connecting a
point $(\th_0,r_0)\in\hbe$ to $(\th_0+2k\pi,r_0)$ with $k\ge 1$.
By \eqref{e55} this path projects down to a closed path $\xi$
contained in $B^\circ_\de-\{0\}\subset B^\circ-\{0\}$
which is a non trivial element of $\pi_1(B^\circ-\{0\})$.
Since $\be\subset U$ and $\xi\subset \cup_{j=0}^n f^j\be$,
we have that $\xi\subset U$.
We have that $p$ and $\partial B$ 
lie in different components of $S-\xi$. 
Let $B'$ be the component of $S-\xi$ that contains $p$.
Then  $fr_SB'\subset \xi$. 
The set
$Z(b)$ intersects both $B'$ and $S-B'$ and therefore
$Z(b)$ intersects $\xi$.
But this is a contradiction since $Z(b)\subset fr_SU$ and 
$\xi\subset U$.

When $F\hbe\cap (\hbe+(2\pi,0))\ne\emptyset$ we work with a different
lifting of $f$, $$H(\th,r):=F(\th,r)-(2\pi,0).$$

\begin{Lemma}\label{L8}

$H$ satisfies $H\hbe\cap \hbe\ne\emptyset$
and 
$H^n\hbe\cap(\hbe+(2k\pi,0))\ne\emptyset$
for some $k\le -1$.

\end{Lemma}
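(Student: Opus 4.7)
The plan is to exploit that $H$ is just another lift of $f$, differing from $F$ by the deck translation $(\th,r)\mapsto(\th-2\pi,r)$, and then mimic the sign-tracking arguments of Lemmas~\ref{L6} and~\ref{L7}. The first claim is immediate: since $H\hbe = F\hbe-(2\pi,0)$, the condition $H\hbe\cap\hbe\ne\emptyset$ is equivalent to $F\hbe\cap(\hbe+(2\pi,0))\ne\emptyset$, which is precisely the case we are in.

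For the second claim I would first establish, by induction on $j$, the identity
\[
H^j(\th,r)=F^j(\th,r)-(2j\pi,0).
\]
The inductive step uses the equivariance $F(\th-2\pi,r)=F(\th,r)-(2\pi,0)$, which holds because $F$ lifts a map defined on the punctured neighborhood of $p$. In particular $H^n\hbe=F^n\hbe-(2n\pi,0)$, so the goal reduces to producing some $m\in\Z$ with $m\le n-1$ and $F^n\hbe\cap(\hbe+(2m\pi,0))\ne\emptyset$; then $k:=m-n\le -1$ does the job.

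The existence of such an $m$ is automatic from $f^n\be\cap\be\ne\emptyset$. To bound $m$ from above, pick $(\th_n,r_n)\in F^n\hbe\cap(\hbe+(2m\pi,0))$, write $(\th_n,r_n)=F^n(\th_0,r_0)$ and $\th_n=\ov\th_0+2m\pi$ with $\th_0,\ov\th_0\in\,]a,b[$. Estimate~\eqref{e6} together with $\th_0-\ov\th_0<b-a<2\pi$ yields
\[
2m\pi<\th_0-\ov\th_0+n\a+n\tfrac\pi 2\e<2\pi+n\a+n\tfrac\pi 2\e,
\]
and condition~\eqref{e4} converts this into $2m\pi<2n\pi$, giving $m\le n-1$. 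The delicate point, just as in Lemma~\ref{L7}, is to keep signs aligned so that~\eqref{e4} (rather than~\eqref{e3}) closes the inequality; this is arranged precisely by the $-2n\pi$ shift coming from the passage $F\rightsquigarrow H$, and no new analytic ingredient beyond what appears in Lemmas~\ref{L6}--\ref{L7} is required.
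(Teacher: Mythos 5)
Your argument is correct and is essentially the paper's proof: both reduce to the identity $H^n\hbe=F^n\hbe-(2n\pi,0)$ and then run the same inequality from estimate~\eqref{e6} and condition~\eqref{e4}, yours phrased for $m=k+n$ and the paper's phrased directly for $k$. The change of variable is only cosmetic, so nothing genuinely new or different is being done.
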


\begin{proof}
The fact
$H\hbe\cap\hbe\ne\emptyset$ follows immediately from
the definition of $H$ and the fact that
$F\hbe\cap (\hbe+(2\pi,0))\ne\emptyset$.

Inequality
$f^n\be\cap \be\ne\emptyset$ implies that 
$H^n\hbe\cap (\hbe+(2k\pi,0))\ne\emptyset$
for some $k\in\Z$.

Let $q\in H^n\hbe\cap(\hbe+(2k\pi,0))$.
Since $H^n(\th_0,r_0)=F^n(\th_0,r_0)-(2n\pi,0)$,
we have that $q=F^n(\th_0,r_0)-(2n\pi,0)=(\th_n,r_n)-(2n\pi,0)$
and 
$q=(\ov\th_0+2k\pi,r_0)$ where both $\th_0$ and $\ov\th_0$ belong to $]a,b[$.
By~\eqref{e6} we have that
$|\th_n-(\th_0+n\a)|<n\frac \pi 2 \e$.
Therefore
\begin{gather*}
|\ov\th_0+2k\pi+2n\pi-\th_0-n\a|<n\tfrac\pi2\e
\\
2k\pi+\ov\th_0-\th_0+n(2\pi-\a)<n\tfrac \pi2\e
\\
2k\pi<\th_0-\ov\th_0-n(2\pi-\a)+n\tfrac \pi2\e
<2\pi-n(2\pi-\a)+n\tfrac \pi2\e<0
\qquad\text{by }\eqref{e4},
\end{gather*}
implying that $k\le -1$.

\end{proof}

Now we have $H\hbe\cap \hbe\ne \emptyset$ and 
$H^n\hbe\cap (\hbe+(2k\pi,0))\ne \emptyset$ 
for some $k\le -1$. 
The construction of $\xi$ is done as in the previous case.

Now we prove that one of the branches of $p$ 
is a connection contained in the frontier 
\linebreak
of $U$.

 \subsection{One of the branches of $p$ is a connection contained in the frontier of $U$.}
 \label{ss32}\quad

The arguments in this subsection are due to Mather~\cite{Mat9} and our presentation
is based on Franks, Le Calvez~\cite{FLC}, where they work with area preserving diffeomorphisms of the two
dimensional sphere and take powers of the map to assume that the branches of the fixed
point are invariant.

Firstly a well known result.

\begin{Lemma}\label{L9}
\quad

Let $K$ be a compact connected invariant set and $L$ a branch.
If $L\cap K\ne \emptyset$ then $L\subset K$.
\end{Lemma}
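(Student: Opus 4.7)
The plan is to reduce to a local study at $p$ and use the contracting direction of the saddle, together with the connectedness of $K$, to squeeze a subcontinuum of $K$ onto the local unstable manifold.

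First I would set up the bookkeeping: by replacing $f$ with $f^2$ if necessary one may assume $f(L)=L$ (branches of the saddle are permuted by $f$ in the at-most $2$-cycle, so $f^2$ fixes them), and by replacing $f$ with $f^{-1}$ if necessary one may assume that $L$ is an \emph{unstable} branch of $p$. All hypotheses on $K$ are preserved, as $K$ is invariant under any integer power of $f$. Picking $x\in L\cap K$, the iterates $f^{-n}(x)$ lie in $L\cap K$ by invariance, and, because $L$ is unstable, they converge to $p$; closedness of $K$ thus gives $p\in K$.

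Next I would work in a continuous saddle chart $V$ at $p$ with coordinates $(u,v)$ in which $f(u,v)=(\lambda u,\lambda^{-1}v)$ for some $\lambda>1$ and $L\cap V\subset\{v=0\}$, writing $L^{\mathrm{loc}}=\{(u,0):0<u<\epsilon_0\}$. Since $K$ is a continuum containing $p$ together with the sequence $\{f^{-n}(x)\}\subset L^{\mathrm{loc}}\cap K$ accumulating at $p$, the boundary-bumping theorem for continua supplies a subcontinuum of $K$ inside $\overline V$ that joins $p$ to $\partial V$ on the positive-$u$ side (when $K\not\subset V$; otherwise the global argument simplifies). Replacing this subcontinuum by the component of $p$ inside the vertical strip $\{|u|\le\epsilon_0/\lambda^n\}$ and applying $f^n$, one obtains a subcontinuum $C_n\subset K\cap V$ whose $v$-extent is at most $\epsilon_0/\lambda^n$ and whose $u$-projection, being a connected subset of $\mathbb R$ reaching both $0$ and $\epsilon_0$, covers $[0,\epsilon_0]$. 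For every fixed $u_0\in(0,\epsilon_0)$ this produces a point $(u_0,v_n)\in K$ with $v_n\to 0$; closedness of $K$ gives $(u_0,0)\in K$, so $L^{\mathrm{loc}}\subset K$. Finally, $L=\bigcup_{n\ge 0}f^n(L^{\mathrm{loc}})$ and the invariance of $K$ yield $L\subset K$.

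The delicate point is the continuum-theoretic step: boundary-bumping is needed twice (once to reach $\partial V$ from $p$, once to reach the thin-strip wall) and one must check that the squeezing produces a reaching sub-continuum on the correct \emph{positive} $u$-side. This last point is precisely what the accumulating sequence $\{f^{-n}(x)\}\subset L^{\mathrm{loc}}\cap K$ on the positive $u$-axis ensures; choosing the continuum component of a point of this sequence rather than of $p$ forces the correct orientation. The rest of the argument is purely the geometry of the linear-in-chart saddle dynamics combined with closedness of $K$.
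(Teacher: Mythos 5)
The paper does not actually prove Lemma~\ref{L9}; it states it and refers the reader to Mather \cite{Mat9} (Cor.~8.3), Franks--Le Calvez \cite{FLC} (Lemma~6.1), and \cite{OC2}. So I can only compare your attempt against what a correct proof must accomplish.

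Your reductions are fine: passing to $f^2$ to fix $L$, passing to $f^{-1}$ to make $L$ unstable, deducing $p\in K$ from $f^{-n}(x)\to p$ and the closedness and invariance of $K$, ruling out $K\subset V$ (since a compact connected invariant subset of a linearizing box is $\{p\}$, which is disjoint from $L$), and finally observing that $L=\bigcup_{n\ge 0}f^n(L^{\mathrm{loc}})$ so that $L^{\mathrm{loc}}\subset K$ suffices. All of that is correct.

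The gap is exactly at the step you yourself flag as delicate, and your proposed remedy does not close it. Boundary bumping applied to the component $E_n$ of $p$ (or of a point $(c_m,0)$ of the accumulating sequence) inside the thin vertical strip $R_n=\{|u|\le\epsilon_0/\lambda^n\}\cap \overline V$ guarantees only that $E_n$ meets $\partial R_n$; it says nothing about \emph{which} of the four walls $\{u=\pm\epsilon_0/\lambda^n\}$, $\{v=\pm\epsilon_0\}$ is reached. Your ``squeezing'' computation requires $E_n$ to hit the wall $\{u=\epsilon_0/\lambda^n\}$: only then does $f^n(E_n)$ have $u$-projection reaching $\epsilon_0$ while its $v$-extent shrinks like $\epsilon_0/\lambda^n$. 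If instead $E_n$ exits through $\{v=\pm\epsilon_0\}$, then $f^n(E_n)$ still has $v$-extent $\le\epsilon_0/\lambda^n$, but its $u$-extent is merely bounded above by $2\epsilon_0$ with no lower bound, and the argument that $[0,\epsilon_0]$ lies in the $u$-projection evaporates. Anchoring the component at a sequence point $(c_m,0)$ on the positive $u$-axis does not prevent this: a subcontinuum of $K$ through $(c_m,0)$ confined to the strip is a priori free to run up to $\{v=\epsilon_0\}$ (or down, or to cross the stable axis to the left wall) without ever touching the right wall, and nothing in your argument excludes it. This is a genuine missing step, not a notational detail; one must use the invariance of $K$ (not merely its connectedness) to force a subcontinuum out along the unstable direction, and that is precisely the content of the cited proofs.
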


For a proof see Corollary~8.3 of Mather~\cite{Mat9}
or in $\SS^2$ Lemma~6.1 of Franks, Le Calvez~\cite{FLC}, or
\cite{OC2} for surfaces with boundary and partially defined 
area preserving maps.

  The domain $U$ has finitely many ideal boundary points and therefore $fr_SU$
  has finitely many connected components.
  Since $fr_SU$ is invariant, its components are periodic.
  Let $L$ be a branch of $p$. We claim that
  \begin{equation}\label{LU}
  L\cap U\ne\emptyset \qquad\then\qquad L\subset U.
  \end{equation}
  In fact, if $L$ were not contained in $U$
  then $L$ would intersect a component of $fr_SU$
  and by Lemma~\ref{L9} applied to a power of $f$
  this component would contain $L$.
  We have a little more:
  
  \begin{Lemma}\label{L10}
  \quad
  
  Let $L$ be a branch of $p$. Then we have the following:
  \begin{enumerate}
  \iitem\label{l10a} If $U$ is an open  invariant set and $L\cap U\ne\emptyset$
  then $U$ contains $L$ and the two sectors of $p$ adjacent to $L$.
  \iitem\label{l10b}
  If $K$ is a compact connected invariant set then either $L\subset K$
  or $L$ and its adjacent sectors are contained in one component of $S-K$.
  \end{enumerate}
  \end{Lemma}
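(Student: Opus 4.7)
The plan is to reduce part~\eqref{l10b} to part~\eqref{l10a} and to prove part~\eqref{l10a} by showing that any failure of the sector conclusion forces an orbit in $S\setminus U$ to converge to a point of $L\subset U$, contradicting that $S\setminus U$ is closed. The containment $L\subset U$ in part~\eqref{l10a} is exactly the already-established implication~\eqref{LU}; the new content is the sector assertion.

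For the sector step in part~\eqref{l10a}, I first pass to $f^2$, whose saddle multiplier $\lambda^2$ is positive, so $f^2$ preserves each branch and each sector, and $U$ remains invariant. Choose saddle coordinates around $p$ so that the local branch of $L$ is the positive $x$-axis and $\Sigma_1=\{x>0,\;y>0\}$. Suppose for contradiction that $\Sigma_1$ is not germ-contained in $U$; then there are points $q_n=(x_n,y_n)$ with $x_n,y_n>0$ in $\Sigma_1\cap(S\setminus U)$ and $q_n\to p$, all lying in a saddle chart $V\supset[-\delta,\delta]^2$. Let $k_n$ be the largest nonnegative integer with $\lambda^{2k_n}x_n\le\delta$; then $f^{2j}(q_n)$ stays in $V$ for $0\le j\le k_n$, and $f^{2k_n}(q_n)=(\lambda^{2k_n}x_n,\lambda^{-2k_n}y_n)$ has $x$-coordinate trapped in $(\delta/\lambda^2,\delta]$ while its $y$-coordinate tends to $0$. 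Passing to a subsequence, $f^{2k_n}(q_n)\to(x^\ast,0)\in L$ with $x^\ast>0$. Since $S\setminus U$ is closed and $f$-invariant, the limit lies in $S\setminus U$, contradicting $L\subset U$; the argument for $\Sigma_2$ is identical.

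Part~\eqref{l10b} then follows from Lemma~\ref{L9} and part~\eqref{l10a}. If $L\cap K\ne\emptyset$, Lemma~\ref{L9} applied to a suitable power of $f$ gives $L\subset K$. Otherwise $L$ lies in a single component $C$ of $S\setminus K$. Setting $\tilde U=\bigcup_{n\in\Z}f^n(C)\subset S\setminus K$ yields an open $f$-invariant superset of $L$, and part~\eqref{l10a} produces a neighborhood $V'$ of $p$ with $\Sigma_i\cap V'\subset\tilde U$ for $i=1,2$. Each $(\Sigma_i\cap V')\cup(L\cap V')$ is connected, contained in $S\setminus K$, and meets $C$ through $L\cap V'$, so it lies entirely in $C$; hence $\Sigma_i\cap V'\subset C$.

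The main obstacle is the sector step of part~\eqref{l10a}: one must promote the one-dimensional information $L\subset U$ to the two-dimensional conclusion that the adjacent open sectors sit inside $U$. The key mechanism is that the contracting direction of the saddle squeezes any orbit segment starting in $\Sigma_1\cap V$ arbitrarily close to $L$ before it exits $V$, so closedness of $S\setminus U$ together with $L\subset U$ excludes any forward-$f^2$-invariant orbit in $\Sigma_1\cap(S\setminus U)$. The remaining bookkeeping---passing to $f^2$ to deal with a possibly negative multiplier, and extracting a convergent subsequence---is routine once this mechanism is in place.
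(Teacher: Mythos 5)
Your argument is correct and rests on the same mechanism as the paper's proof: the saddle dynamics sweeps points of an adjacent sector near $p$ onto a compact fundamental-domain arc of $L$, which the paper uses directly (a neighborhood $W\subset U$ of the arc from $x$ to $f^2(x)$ in $L$ has $\bigcup_{n\in\Z}f^{2n}(W)$ containing the adjacent sectors) while you phrase it contrapositively via sequences and closedness of $S\setminus U$; your deduction of part~\eqref{l10b} from part~\eqref{l10a} together with Lemma~\ref{L9} is likewise the paper's route. One small point to add: your normalization $f(x,y)=(\lambda x,\lambda^{-1}y)$ with the local branch of $L$ on the positive $x$-axis forces $L$ to be unstable, so when $L$ is a stable branch you should run the identical argument with $f^{-2}$ (note $S\setminus U$ is also $f^{-1}$-invariant).
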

  
  \begin{proof}\quad
  
  The branch $L$ is invariant or of period two. By~\eqref{LU} we have that $L\subset U$. Let $x\in L$ and let $W\subset U$ be
  a neighborhood of the arc from $x$ to $f^2(x)$ in $L$.
  Then $\cup_{n\in\Z}f^{2n}(W)$ contains the sectors of $p$ adjacent to $L$.
  Item~\eqref{l10b} easily follows from item~\eqref{l10a} letting $U$ be a component of $S-K$ which intersects $L$.
  
  \end{proof}

 Let $b=\a(e)$ and $C(b)$ be the circle of prime ends that contains $e$.
 
 Let $(x,y)$ be continuous coordinates in a neighborhood $V$ of $p$ with $p$
 at the origin where $f(x,y)=(\la x,\la^{-1}y)$ with $|\la|>1$.
 We may assume that $V$ is the open ball 
 $B^\circ_\de$ of radius $\de$ and center at $(0,0)$.
 In these coordinates the local branches and sectors are:
 \begin{alignat}{2}
 L_1&:=\{(x,y)\in B^\circ_\de\,|\, x>0,\,y=0\}
 ,\quad S_1&&:=\{(x,y)\in B^\circ_\de\,|\,x>0,\,y>0\},
 \notag\\
 L_2&:=\{(x,y)\in B^\circ_\de\,|\, x=0,\,y>0\}
 ,\quad S_2&&:=\{(x,y)\in B^\circ_\de\,|\,x<0,\,y>0\},
 \label{sectors}\\
 L_3&:=\{(x,y)\in B^\circ_\de\,|\, x<0,\,y=0\}
 ,\quad S_3&&:=\{(x,y)\in B^\circ_\de\,|\,x<0,\,y<0\},
 \notag\\
 L_4&:=\{(x,y)\in B^\circ_\de\,|\, x=0,\,y<0\}
 ,\quad S_4&&:=\{(x,y)\in B^\circ_\de\,|\,x>0,\,y<0\}.
\notag
\end{alignat}

By \eqref{accessible}, there exists a path $\be:]0,1]\to U$ such that $\lim_{t\to 0}\be(t)=p$
in $S$ and 
\linebreak
$\lim_{t\to 0}\beta(t)=e$ in $\hU$. We may assume that $\de$ 
is small enough so that $\be(1)\notin B^\circ_\de$ and we can apply
Lemma~\ref{L2}. We may also assume that $\be(]0,1[)\subset V$.

For every decreasing sequence $(r_n)$ contained in $]0,\de[$ with 
$\lim_{n\to\infty}r_n=0$ there exists a chain $(V_n)$ representing $e$
such that $\xi_n=fr_UV_n\subset C_{r_n}$.
Notice that $\xi_n\cap \be\ne\emptyset$ for every $n$.
Since $\be(]0,1[)\cup (\cup_n\xi_n)$ is connected an contained in both
$U$ and $V$, all the arcs $\xi_n$ must be contained in one connected 
component of $U\cap V$.
Denote this component by $W$.

\begin{Lemma}\label{L11}
\quad

There exists an open arc of prime ends $]a,c[$ containing $e$ in $C(b)$
such that the function $\phi:]a,c[\to fr_SU$ defined by $\phi(e')=X(e')$ is 
continuous, $\phi(]a,e[)$ and $\phi(]e,c[)$ are local branches of $p$
 and $\phi\circ \hf=f\circ\phi$.
\end{Lemma}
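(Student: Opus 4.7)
The plan is to single out one component $W$ of $U\cap V$ that carries the accessible prime end $e$, describe the local structure of $fr_VW$ at $p$, and then parametrize an arc of $C(b)$ around $e$ by the two local branches of $p$ bordering $W$.

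First I would let $W$ be the connected component of $U\cap V$ that contains the tail $\be(]0,1[)$ of the accessing path; it then contains $\xi_n=fr_UV_n$ for every $n$ large. Because $p\in fr_SU\cap cl_SW$ we have $p\in fr_VW$, and the two endpoints of $\xi_n$ lie in $C_{r_n}\cap fr_VW\subset C_{r_n}\cap fr_SU$ and converge to $p$. Using the hyperbolic structure of $f$ at $p$ together with Lemma~\ref{L9}, I would show that, after enlarging $n$, these endpoints lie on two local branches $L_i,L_j\subset fr_SU$ of $p$ that border $W$. The delicate step is this structural identification: any sequence in $fr_SU$ accumulating on $p$ off the local invariant manifolds would be driven out of $V$ by the saddle dynamics, yet its forward (or backward) iterates must remain in the compact invariant set $fr_SU$ and accumulate back on a local branch; by Lemma~\ref{L9} that full branch of $p$ must then lie entirely in $fr_SU$. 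Running this argument for forward and backward iteration produces the unstable and stable sides, and the adjacency to $W$ is read off from the fact that the endpoints of $\xi_n$ must eventually meet the components of $fr_VW$ at $p$ coming from these branches.

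Having identified $L_i$ and $L_j$, I would construct $\phi^{-1}$ pointwise. For each $q\in L_i$ close enough to $p$, I adapt the crosscut construction of Lemma~\ref{L2} and Claim~\ref{c24}, now with circles $C_r$ centered at $q$ instead of at $p$: inside each such circle one selects the component of $C_r\cap U$ separating an approach-path to $q$ in $W$ from the exterior, obtaining a prime chain whose frontiers shrink to $\{q\}$. Call the corresponding prime end $e(q)$, so $X(e(q))=\{q\}$. Since $q\in L_i\subset Z(b)$, the chain is eventually contained in every neighborhood of $b$ in $U^*$, whence $\a(e(q))=b$ and $e(q)\in C(b)$. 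The symmetric construction produces $e(q)$ for $q\in L_j$. Setting $\phi^{-1}(p)=e$ and $\phi^{-1}(q)=e(q)$ defines a continuous injection from a small neighborhood of $p$ in $L_i\cup\{p\}\cup L_j$ into $C(b)$; its image is a connected open arc $]a,c[\subset C(b)$ containing $e$, and $\phi$ is its inverse, sending $]a,e[$ onto a local branch of $p$ contained in $L_i$ and $]e,c[$ onto a local branch of $p$ contained in $L_j$.

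Finally, $\phi\circ\widehat{f}=f\circ\phi$ is forced by the uniqueness of the prime end landing at a given point: if $e'\in]a,c[$ has $\phi(e')=q$, pushing a defining chain for $e(q)$ through $f$ yields a chain for $\widehat{f}(e')$ whose frontiers shrink to $f(q)$, hence $\widehat{f}(e')=e(f(q))=\phi^{-1}(f(q))$. Since $f$ permutes the local branches $L_i,L_j$ (fixing each, or with period two, in which case the same argument with $f^2$ applies), $\widehat{f}$ carries $]a,c[$ to itself with the claimed intertwining.
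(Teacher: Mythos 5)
The core difficulty of this lemma is exactly where you label the step ``delicate,'' and I do not think your sketch resolves it. You want the endpoints of the crosscuts $\xi_n=fr_U V_n$ to lie on two local branches of $p$. Your argument — points of $fr_S U$ near $p$ but off the local invariant manifolds are driven out of $V$ by the saddle dynamics, yet must stay in the compact invariant set $fr_S U$, so by Lemma~\ref{L9} a full branch lies in $fr_S U$ — does not yield what you need. It only shows that $fr_S U$ contains entire branches of $p$, which is not a contradiction and does not place the endpoints of $\xi_n$ on the local branches. Indeed, a priori $fr_S U$ can be the closure of a recurrent branch $L$ of $p$, and then near $p$ it contains arcs of $L$ passing through the sectors; the component $W$ of $U\cap V$ carrying $\be$ could be a thin ``channel'' bounded by two such returning arcs, and the endpoints of the $\xi_n$ would then sit on those arcs, not on local invariant manifolds.

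What rules this out in the paper is the area-preservation hypothesis, applied through Lemma~\ref{L3}: because $e$ is a fixed prime end, $f(\xi_n)\cap\xi_n\ne\emptyset$ for $n$ large. Coupled with the explicit choice of sup-norm circles $C_\rho$ (so that $\xi_m\subset\Ga_\rho$, an L-shaped arc, and $f(\Ga_\rho)\cap\Ga_\rho$ is a single corner point), this forces $\xi_m$ to contain the whole arc $\La_\rho$ from $(\rho,\la^{-1}\rho)$ to $(\la^{-1}\rho,\rho)$. Sweeping $\rho$ and pushing by $f^k$ then fills an entire sub-sector $S_1\cap B^\circ_\ve\subset W$, which is precisely what shows $fr_V W$ near $p$ is $(L_1\cup L_2)\cap B^\circ_\ve$ and hence that the $\xi_n$ must end on local branches. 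Your proposal makes no use of Lemma~\ref{L3} (or any area-preservation input) at this point, so the identification is unjustified.

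A secondary omission: the paper treats separately the case in which the $\xi_n$ do intersect the local branches of $p$. There one uses Lemma~\ref{L10}~\eqref{l10a} to see that $\{\xi_n\}$ cannot meet all four branches (else $b$ would not be regular) and to isolate which branches bound $W$. Your write-up does not address this dichotomy. The remainder of your plan — building prime ends $e(q)$ landing at $q$ on $L_i\cup L_j$, checking $\a(e(q))=b$, and deriving $\phi\circ\hf=f\circ\phi$ from naturality of $X$ under $\hf$ — is consistent with the paper's construction and would go through once the structural identification is repaired.
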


\begin{proof}
\quad

First we consider the case when the arcs $\xi_n$ do not intersect the local
branches of $p$. This implies that $W$ must be contained in one of the sectors
of $p$ defined by $B^\circ_\de$, say $S_1$.

By Lemma~\ref{L3}, if $n_0=\inf\{n\,|\,f(V_n)\subset V_1\}$, then
$f(\xi_n)\cap \xi_n\ne\emptyset$ for $n\ge n_0$.
This implies that $\la>0$. Let $\rho$ be any number in $]0,r_{n_0}[$. 
By \eqref{l2b} of  Lemma~\ref{L2} we may assume that 
$\xi_m\subset C_\rho$ for some $m>n_0$.

Let us consider the sup norm on $\re^2$. In this case $B_\rho$ is the square
with vertices at $(\pm\de,\pm\de)$, and if
$\Ga_\rho =(]0,\rho]\times\{\rho\})\cup(\{\rho\}\times]0,\rho])$
then $\xi_m\subset \Ga_\rho$.
Since $f(\Ga_\rho)\cap \Ga_\rho=\{(\rho,\la^{-1}\rho)\}$, we have that 
$f(\xi_m)\cap \xi_m=\{(\rho,\la^{-1}\rho)\}$ as well.
Therefore $(\rho,\la^{-1}\rho)$ and $(\la^{-1}\rho,\rho)$ belong to $\xi_m$
and the arc $\La_\rho$ from $(\rho,\la^{-1}\rho)$ to $(\la^{-1}\rho,\rho)$
inside $\Ga_\rho$ is contained in $\xi_m\subset U$.
This holds for every $\rho\in]0,r_{n_0}[$.
It follows that $\cup_{k\in\Z}f^k(\cup_{\rho\in]0,r_{n_0}[}\La_\rho)$
is contained in $U$ and contains $S_1\cap B^\circ_\ve$ for some
small number $\ve<\de$.

Therefore $S_1\cap B^\circ_\ve\subset W$ and $(L_1\cup L_2)\cap U=\emptyset$.
It follows that for $n$ sufficiently large we have that 
$V_n\subset S_1\cap B^\circ_\ve$ and the arcs $\xi_n$ must have one 
end point at $\{0\}\times ]0,\ve[$ and another at $]0,\ve[\times\{0\}$.
For any $x\in(\{0\}\times[0,\ve[)\cup([0,\ve[\times\{0\})$
if $B^\circ_{1/n}(x)$ is the open ball of radius $\tfrac 1n$ with center
at $x$, then $V_n=B^\circ_{1/n}(x)\cap W$ defines a prime chain
$(V_n)=e'$ such that $X(e')=\{x\}$.
Then there exists an open arc of prime ends $]a,c[$ containing $e$
in $C(b)$ such that if $e'\in]a,c[$ then $X(e')=\{x\}$ for some
$x=x(e')\in(\{0\}\times[0,\ve[)\cup([0,\ve[\times\{0\})$.
If we define $\phi:]a,c[\to S$ by $\phi(e')=x(e')$, then it is easy to check that
$\phi$ is continuous, $\phi(e)=p$, $\phi(]a,e[)=\{0\}\times[0,\ve[\subset L_2$
and $\phi(]e,c[)=[0,\ve[\times\{0\}\subset L_1$. Since
$X(\hf(e'))=f(X(e'))$ for any prime end $e'$ we have that 
$\phi\circ\hf=f\circ\phi$.

Now consider the case when the arcs $\xi_n$ intersect the local branches $L_i$.
The family $\{\xi_n\}$ can not intersect the four branches, otherwise by
 Lemma~\ref{L10}.\eqref{l10a}, $W$ would contain the four sectors of $p$ 
 implying that $B^\circ_\ve\cap fr_SU=\{p\}$ for small $\ve$,
 contradicting the fact that $b$ is regular.
 
 So $\{\xi_n\}$ intersects at least one of the local branches and at least
 one is disjoint from $\{\xi_n\}$. 
 We may assume that $L_1\cap\{\xi_n\}=\emptyset$ and
 $L_2\cap\{\xi_n\}\ne\emptyset$.
 
 Let $k=\sup\{j\in\{2,3,4\}\,|\,L_j\cap\{\xi_n\}\ne\emptyset\}$.
 Then by Lemma~\ref{L10}.\eqref{l10a} we have that 
 $L_1\cap W=\emptyset$, $L_{k+1}\cap W=\emptyset$ ($L_5=L_1$) and
 $$
 (S_1\cup\cup_{i=2}^k(L_i\cup S_i))\cap B^\circ_\ve
 \subset W\subset
 S_1\cup \cup_{i=2}^k(L_i\cup S_i)
 \quad
 \text{for some}\quad \ve\in]0,\de[.
 $$
 
 It follows that for $n$ sufficiently large we have that 
 $V_n\subset (S_1\cup\cup_{i=2}^k(L_i\cup S_i))\cap B^\circ_\e$.
 The sets $L_1\cap B^\circ_\ve$ and $L_{k+1}\cap B^\circ_\ve$ are contained in $fr_SW$
 and the arcs $\xi_n$ must have one end point at 
 $L_1\cap B^\circ_\e$ and another at $L_{k+1}\cap B^\circ_\ve$.
 
 As in the precious case, if $x\in(L_1\cup L_{k+1})\cap B^\circ_\e$ and 
 $V_n= B^\circ_{1/n}(x)\cap W$ then $\phi((V_n))=x$ is the desired map.
 In case $k=4$, $\phi(]a,c[)\subset\{p\}\cup L_1$.
 
 \qed

 Let $]e_1,e_2[$ be an arc of prime ends with $\hf(e_i)=e_i$ and 
 $$\lim\limits_{n\to-\infty}\hf^n(e)=e_1 \quad \text{and}\quad 
 \lim\limits_{n\to+ \infty}\hf^n(e)=e_2
 \quad \text{for any } e\in]e_1,e_2[
 $$
(possibly with $e_1=e_2$).
From Lemma~\ref{L11} we know that there exists a function
$\phi_1$ that maps an arc $]e_1,c[$ onto a local branch  
of $p_1=\phi_1(e_1)$ which is part of an unstable branch $L_1$ of $p_1$.
For any $e\in]e_1,e_2[$ there exist $n\in\Z$ and $e'\in]e_1,c[$
such that $e=\hf^n(e')$. Since $X(\hf^n(e'))=f^n(X(e'))$
we have that $e$ is accessible and $\phi_1(e)=X(e)$ extends
$\phi_1$ to $]e_1,e_2[$ with the same properties.
We have that $L_1=\phi_1(]e_1,e_2[)$.

The same happens with $e_2$. There exists a function $\phi_2$ that maps
and arc $]a,e_2[$ onto a local branch of $p_2=\phi_2(e_2)$ which is 
part of a stable branch $L_2$ of $p_2$. The map $\phi_2(e)=X(e)$ extends to
$]e_1,e_2[$ in the same way and $L_2=\phi_2(]e_1,e_2[)$.
Therefore $L_1=\phi_1(]e_1,e_2[)=\phi_2(]e_1,e_2[)=L_2$ is
a connection contained in $fr_SU$.

So the proof of item~\eqref{t21} of Theorem~\ref{T2} is complete.

 \end{proof}

\subsection{Proof of items \eqref{t22}, \eqref{t23} and \eqref{t24} of Theorem~\ref{T2}.}
\label{ss33}
\quad

Let $L$ be an invariant branch of a fixed point $p$ of saddle type and assume that all fixed
points of $f$ contained in $cl_SL$ are non degenerate. We are going to assume that $L$ does not
accumulate on one of its adjacent branches through the adjacent sector and show that $L$ must be a connection.

As in the previous subsection we are going to consider a system $(x,y)$ of continuous coordinates
in a neighborhood $V$ of $p$ with $p$ at the origin where $f(x,y)=(\la x ,\la^{-1}y)$ with $\la>1$.
We may assume that $V$ is the open ball $B^\circ_\de$ of radius $\de$ and center at $(0,0)$.
We are going to assume that $L$ is the branch that contains $\{(x,y)\in B^\circ_\de\,|\,x>0,\,y=0\}$
and that $L$ is disjoint from $S_1=\{(x,y)\in B^\circ_\de\,|\,x>0,\,y>0\}$. 
Let $U$ be the connected component of $S-cl_SL$ that contains $S_1$.
Then $f(U)=U$ and by Lemma~2.3 in \cite{Mat9} or Proposition~3.15 in \cite{OC2}, $U$ has finitely many ideal boundary points
(in fact $U$ has at most $g+1$ ideal boundary points where $g$ is the genus of $S$, but we do not
need this here). We also have that $fr_SU\subset cl_SL$ and thus $fr_SU$ contains 
no degenerate fixed point of $f$. Therefore $U$ satisfies the hypothesis of item~\eqref{t21}.

We claim that  $\{p\}$ is the principal set of a fixed prime end $e$. When $L_2\cap U=\emptyset$ 
we see this by defining the chain $V_n=B^\circ_{1/n}\cap S_1$. When $L_2\subset U$ 
and $L_3\cap U=\emptyset$ we take $V_n$ as $B^\circ_{1/n}\cap (S_1\cup L_2\cup S_2)$, when 
$L_3\subset U$ and $L_4\cap U=\emptyset$ we take $V_n$ as
$B^\circ_{1/n}-(L_4\cup S_4\cup L_1)$ and when $L_4 \subset U$ we take  $V_n$ as $B^\circ_{1/n}-L_1$.

From Lemma~\ref{L11} there exists an open arc of prime ends $]a,c[$  in $C(b)$ containing $e$
such that the function $\phi:]a,c[\to fr_SU$, defined by $\phi(e')=X(e')$, is continuous
and $\phi(]a,e[)$ and $\phi(]e,c[)$ are local branches of $p$.
The local branch of $L$ is equal to either $\phi(]a,e[)$ or $\phi(]e,c[)$ or both.
In the proof of item~\eqref{t21} we showed that both $\phi(]a,e[)$ 
and $\phi(]e,c[)$ are part of connections.
Therefore $L$ must be a connection.

We proved that $L$ accumulates on both adjacent sectors.
From  Lemma~\ref{L10}.\eqref{l10b}
we have that if $L$ accumulates on one adjacent sector then 
$L\subset \om(L)$. This completes the proof of item~\eqref{t22}.

Assume that $L_1$ and $L_2$ are invariant adjacent branches that are not connections 
and that all fixed points of $f$ contained in $cl_S(L_1\cup L_2)$ are non degenerate.
From Lemma~\ref{L10}.\eqref{l10b} we have that
 $L_2\subset cl_S L_1$ and $L_1\subset cl_SL_2$ which proves 
 item \eqref{t23}.
 
 Item~\eqref{t24} follows from item~\eqref{t23}.

\section{Surfaces with boundary and homoclinic points.}
\label{S4}

In this section we  extend previous results to
compact connected orientable surfaces $S$ with boundary.

Let $f:S\to S$ be an orientation preserving area preserving homeomorphism of $S$.
Let $p\in\partial S$ be a periodic point of $f$ of period $\tau$.
We say that $p$ is of saddle type if there exist a neighborhood
$V$ of $p$ and continuous coordinates $(x,y)$, $y\ge 0$,
in $V$ with $p$ at the origin and $f^\tau(x,y)=(\la x,\la^{-1}y)$ 
with $\la>0$ and $\la\ne 1$. The point $p$ has two sectors and 
three branches, two of which are connections contained in $\partial S$
and the other is contained in $S-\partial S$.

 Let $C$ be a connected component of $\partial S$.
 Suppose that all periodic points in $C$ are of saddle type.
Then there are only finitely many periodic points in $C$ and, as we move around $C$,
periodic points must be alternatively attracting and repelling in $C$
and their branches that are contained in $S-\partial S$ also alternate from stable 
to unstable.

\begin{Theorem}\label{T6}\quad

Let $S$ be a compact connected orientable surface with boundary
provided with a finite measure $\mu$ which is positive on open sets 
and $f:S\to S$ be an orientation preserving area preserving homeomorphism of $S$.
\begin{enumerate}
\iitem\label{t61} 
Suppose that $L$ is an invariant branch of $f$ and that all fixed points of $f$
contained in $cl_SL$ are non degenerate. Then either $L$ is a connection or
$L$ accumulates on both adjacent sectors. In the later alternative $L\subset \om(L)$.

\iitem\label{t62}
Let $p\in S-\partial S$ be a fixed point of $f$ of saddle type and let  $L_1$ and $L_2$
be adjacent branches of $p$ that are not connections.
If $L_1$ and $L_2$ are invariant and all fixed points of $f$ contained in $cl_S(L_1\cup L_2)$
are non degenerate then $cl_SL_1=cl_S L_2$. If in addition $S$ has genus 0
then $L_1\cap L_2\ne \emptyset$.

\iitem\label{t63}
Let $p\in S-\partial S$ be a fixed point of $f$ of saddle type. 
Assume that all fixed points of $f$ contained in 
$cl_S(W^u_p\cup W^s_p)$ are non degenerate.
If the branches of $p$ are invariant and none of them is a connection,
then all branches have the same closure in $S$. If in addition the genus of $S$ 
is $0$ or $1$, then the four branches of $p$ have homoclinic points.

\iitem\label{t64}
Let $C$ be a connected component of $\partial S$ and suppose that 
all fixed points $p_1, \ldots, p_{2n}$ of $f$ in $C$ are of saddle type.
Let $L_i$ be the branch of $p_i$ contained in $S-\partial S$.
Assume that for every $i$ all fixed points of $f$ contained in $cl_SL_i$
are non degenerate and that $L_i$ is not a connection.
Then $cl_SL_i=cl_SL_j$ for any pair $(i,j)$.

If in addition $S$ has genus $0$ then any pair $(L_i,L_j)$ of stable and unstable
branches intersect. The same happens when the genus of $S$ is $1$
provided there are at least $4$ fixed points in $C$.

\end{enumerate}

\end{Theorem}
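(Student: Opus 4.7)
The plan is to bootstrap from Theorem~\ref{T2} using the fact that the prime-ends theory of Section~\ref{S2} and the arguments of Section~\ref{S3} extend verbatim to invariant domains $U$ inside a compact orientable surface $S$ with boundary, as developed in~\cite{OC2}. The key transported fact is the boundary analog of item~\eqref{t21}: a principal point of a fixed regular prime end of such a $U$ cannot be elliptic, and one of its branches must be an invariant connection contained in $fr_SU$. Granting this, item~\eqref{t61} is a word-for-word repetition of the proof of~\eqref{t22}: if an invariant branch $L$ fails to accumulate on an adjacent sector, the residual domain of $cl_SL$ containing that sector plays the role of $U$, has only finitely many ideal boundary points, and carries only non degenerate fixed points of $f$ on its frontier, so Lemma~\ref{L11} together with the boundary analog of~\eqref{t21} forces a local branch of $L$ to lie on a connection, making $L$ itself a connection. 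Item~\eqref{t62} then follows from~\eqref{t61} and Lemma~\ref{L10}\eqref{l10b} exactly as~\eqref{t23} follows from~\eqref{t22}, and the first assertion of~\eqref{t63} follows by applying~\eqref{t62} to the four branches of $p$, as in~\eqref{t24}.

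For item~\eqref{t64}, I apply~\eqref{t61} to each interior branch $L_i$. Because periodic points on $C$ alternate between attractors and repellers, the two boundary branches of $p_i$ are connections along $C$ joining $p_i$ to $p_{i\pm 1}$, and $L_i$, $L_{i\pm 1}$ are of opposite stable/unstable type. Since $L_i$ is not a connection, \eqref{t61} gives that $L_i$ accumulates on the sector $\Sigma$ of $p_i$ bounded by $L_i$ and the boundary arc from $p_i$ to $p_{i+1}$. Locally near this arc, $S\setminus\partial S$ has a single connected side containing both $\Sigma$ and the analogous sector $\Sigma'$ of $p_{i+1}$ bounded by $L_{i+1}$ and the same arc, so the accumulation of $L_i$ on $\Sigma$ produces points of $cl_SL_i$ inside $\Sigma'$ arbitrarily close to the arc. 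Applying Lemma~\ref{L10}\eqref{l10b} to $cl_SL_i$ and the branch $L_{i+1}$, the alternative in which $L_{i+1}$ and its adjacent sectors lie in one component of $S\setminus cl_SL_i$ is excluded by this intersection with $\Sigma'$, so $L_{i+1}\subset cl_SL_i$. By symmetry $L_i\subset cl_SL_{i+1}$, and chaining around $C$ gives $cl_SL_i=cl_SL_j$ for all pairs $(i,j)$.

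The remaining genus-restricted conclusions (intersection of $L_1,L_2$ in~\eqref{t62} when $g=0$; homoclinic points in~\eqref{t63} when $g\le 1$; intersections in~\eqref{t64} in the corresponding regimes) are topological consequences of equality of closures. Suppose invariant branches $L^u,L^s$ of saddles (possibly equal) satisfy $cl_SL^u=cl_SL^s$ but $L^u\cap L^s=\emptyset$. After collapsing each component of $\partial S$ to a point I work in a closed surface of the same genus $g$. In genus $g=0$, pick an accumulation point $q\in L^s$ of $L^u$ and a short subarc $\sigma\subset L^s$ through $q$; an arc of $L^u$ approaching $q$, joined to $\sigma$ by a small detour inside a disk neighborhood of $q$, bounds a topological disk invariant under a suitable iterate of $f$, to which the Brouwer translation theorem applies and forces either a fixed point inside (forbidden by non degeneracy) or a crossing $L^u\cap L^s\ne\emptyset$, a contradiction. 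In genus $g=1$ the same argument is lifted to the universal cover $\re^2$; the four branches of a saddle lift to rays reaching all four quadrants of a fundamental domain, and the hypothesis of at least four saddles on $C$ in~\eqref{t64} provides enough branches to execute the lifted disk argument. The main obstacle I anticipate is precisely this last step: making the Jordan-curve and universal-cover arguments rigorous when $L^u$ and $L^s$ are only injectively immersed $C^0$ curves that may accumulate wildly on themselves, so that the subarc $\sigma$ and the closing-up detour must be chosen with care to avoid spurious self-crossings and to control which fixed points sit inside the resulting disk.
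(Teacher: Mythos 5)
Your reduction of items~\eqref{t61}--\eqref{t63} (closure equality) and of the first part of~\eqref{t64} is in the right spirit, but the paper uses a different and cleaner device: instead of re-developing the prime-end machinery on a surface with boundary, it passes to the \emph{double} $S'$ of $S$ (glue two copies along $\partial S$, double the measure, extend $f$), so that $S'$ is a boundaryless surface where Theorem~\ref{T2} applies directly, and $cl_S A=cl_{S'}A$ for $A\subset S$. Your version, which appeals to a boundary analog of~\eqref{t21} via~\cite{OC2}, may well be viable, but it quietly assumes a substantial body of transported theory; the doubling trick avoids this entirely. Your chaining argument for~\eqref{t64} is close to the paper's (the paper uses Lemma~\ref{L12}, which is Lemma~\ref{L10}\eqref{l10b} applied to $K=cl_SL_1$), and that part is fine.

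The real trouble is in the genus-restricted homoclinic/heteroclinic conclusions, where your approach diverges from the paper and, as written, has genuine gaps. For genus $0$, you propose to close up an arc of $L^u$ approaching $q\in L^s$ together with a subarc of $L^s$ and a small detour to obtain a \emph{disk invariant under a suitable iterate of $f$}, and then invoke Brouwer. This construction does not produce an invariant disk: the arc of $L^u$, the arc of $L^s$, and certainly the small detour are not jointly invariant, and there is no finite iterate that fixes such a disk in general. The paper's argument avoids this entirely. It works in a single sector $\Sigma$ of $p$, uses the dynamical fact that orbits enter $\Sigma$ only through $En(\Sigma)$ and leave only through $Ex(\Sigma)$ to locate first-hitting points $q_1\in En(\Sigma)\cap L_1$ and $q_2\in Ex(\Sigma)\cap L_2$, and forms the non-invariant Jordan curves $\Gamma_i=L_i[p,q_i]\cup\gamma_i$ with short segments $\gamma_i\subset int_S\Sigma$. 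Jordan separation (using genus $0$) then forces $\Gamma_1\cap\Gamma_2$ to contain a point off $\gamma_1\cup\gamma_2$, i.e.\ a homoclinic point. No Brouwer translation argument and no invariant region is needed, and collapsing boundary components is also unnecessary (and risks creating degenerate fixed points at the collapsed points, which would destroy the non-degeneracy hypotheses you need to quote Theorem~\ref{T2}).

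For genus $1$, the gap is larger and you already sensed it. The paper does not lift to the universal cover; it takes the specific $3$-fold cyclic cover $\pi:E\to S$ obtained by restricting $\re^2/(3\Z\times\Z)\to\re^2/\Z^2$. The point is that $E$ is again a compact genus-$1$ surface, so $H_1$ still has rank $2$, while $p$ lifts to \emph{three} fixed points $p_1,p_2,p_3$. After building Jordan curves $\Gamma^u_i,\Gamma^s_i$ through each $p_i$ with $|\#(\Gamma^u_i,\Gamma^s_j)|=\delta_{ij}$, the three classes $\Gamma^s_1,\Gamma^s_2,\Gamma^s_3$ must be linearly dependent in $H_1(\T^2)\cong\Z^2$, which contradicts the intersection pattern. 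The same mechanism, applied to four alternating branches on a boundary circle, gives~\eqref{t64} in genus $1$. Your universal-cover sketch has no rank-versus-count tension to exploit (the cover is noncompact, the deck group is infinite, and the branches lift to noncompact immersed curves), so it is not a minor rigor issue to fill in; the key combinatorial/homological idea of the paper is simply missing. Also note the paper's footnote: the finite cyclic cover trick works precisely because in genus $1$ the cover has the same first Betti number as the base, which fails in higher genus -- this is the obstruction that makes the theorem false for $g\ge 2$, and any correct proof in genus $1$ needs to use this feature.
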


\begin{Remark}\quad

The conclusion about the existence of homoclinic and heteroclinic points
above is false if the genus of $S$ is greater than one. Examples could be 
time one maps of area preserving flows on surfaces with finitely many 
singularities and every other orbit dense. In item~\eqref{t62} the conclusion
about homoclinic points is false if the genus of $S$ is one.
In the torus two branches could close a connection and the other 
two spin around the torus like a line of irrational slope without intersecting.
For the same reason,
when the genus of $S$ is one in item~\eqref{t64},
we need at least four recurrent branches in a boundary component
to have heteroclinic points.
\end{Remark}

For applications we need to state a theorem obtainnig homoclinics 
for area preserving maps defined on an open set of a surface with boundary.
So we split Theorem~\ref{T6} into two theorems, one containing 
the results in accumulation of invariant manifolds and another
containing the results on homoclinics.

\begin{Theorem}\quad\label{T7}

Let $S$ be a compact connected orientable surface with boundary
provided with a finite measure $\mu$ which is positive on open sets 
and $f:S\to S$ be an orientation preserving area preserving homeomorphism of $S$.
\begin{enumerate}
\iitem\label{t71} 
Suppose that $L$ is an invariant branch of $f$ and that all fixed points of $f$
contained in $cl_SL$ are non degenerate. Then either $L$ is a connection or
$L$ accumulates on both adjacent sectors. In the later alternative $L\subset \om(L)$.

\iitem\label{t72}
Let $p\in S-\partial S$ be a fixed point of $f$ of saddle type and let  $L_1$ and $L_2$
be adjacent branches of $p$ that are not connections.
If $L_1$ and $L_2$ are invariant and all fixed points of $f$ contained in $cl_S(L_1\cup L_2)$
are non degenerate then $cl_SL_1=cl_S L_2$. 

\iitem\label{t73}
Let $p\in S-\partial S$ be a fixed point of $f$ of saddle type. 
Assume that all fixed points of $f$ contained in 
$cl_S(W^u_p\cup W^s_p)$ are non degenerate.
If the branches of $p$ are invariant and none of them is a connection,
then all branches have the same closure in $S$.

\iitem\label{t74}
Let $C$ be a connected component of $\partial S$ and suppose that 
all fixed points $p_1, \ldots, p_{2n}$ of $f$ in $C$ are of saddle type.
Let $L_i$ be the branch of $p_i$ contained in $S-\partial S$.
Assume that for every $i$ all fixed points of $f$ contained in $cl_SL_i$
are non degenerate and that $L_i$ is not a connection.
Then $cl_SL_i=cl_SL_j$ for any pair $(i,j)$. 

\end{enumerate}
\end{Theorem}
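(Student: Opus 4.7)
The plan is to reduce Theorem~\ref{T7} to Theorem~\ref{T2} by doubling $S$ along its boundary. Set $\widetilde S=S\cup_{\partial S}S'$, the closed orientable surface obtained by gluing $S$ to a mirror copy $S'$; extend $f$ to $\widetilde f\colon\widetilde S\to\widetilde S$ by $f$ on each copy (well defined since $f(\partial S)=\partial S$) and $\mu$ to $\widetilde\mu$ symmetrically. Because $S'$ carries the reversed orientation, $\widetilde f$ is orientation preserving on $\widetilde S$, and it preserves $\widetilde\mu$, so Theorem~\ref{T2} applies to $(\widetilde S,\widetilde f,\widetilde\mu)$. Under this doubling an interior fixed point of $f$ is unchanged, while a boundary saddle $p$ with normal form $f^\tau(x,y)=(\lambda x,\lambda^{-1}y)$, $y\ge 0$, extends to an interior saddle of $\widetilde f$ with the same normal form on the full chart $\{|x|,|y|<\delta\}$; its four branches in $\widetilde S$ are the two boundary connections in $\partial S$ and the interior branch $L$ together with its reflection $L'\subset S'-\partial S$. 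Non-degeneracy is preserved, and the involution $\iota$ swapping the two copies, which commutes with $\widetilde f$, provides the transfer principle that a branch which is not a connection in $S$ cannot be a connection in $\widetilde S$: any connection in $\widetilde S$ involving an interior point of $S'$ could, via $\iota$, be pulled back to a connection in $S$.

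With these preparations, items \eqref{t71}, \eqref{t72} and \eqref{t73} follow by applying items \eqref{t22}, \eqref{t23}, \eqref{t24} of Theorem~\ref{T2} to $\widetilde f$ on $\widetilde S$ and intersecting the resulting closures with the closed subset $S\subset\widetilde S$. For item \eqref{t74}, each boundary saddle $p_i$ becomes an interior saddle of $\widetilde f$ with four branches $L_i,L_i',M_{i-1},M_i$, where $M_j\subset C$ is the boundary connection between $p_j$ and $p_{j+1}$. Each $L_i$ is invariant (being the unique branch of $p_i$ in $S-\partial S$) and, by the transfer principle, is not a connection in $\widetilde S$; its adjacent branches in $\widetilde S$ are precisely $M_{i-1}$ and $M_i$. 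Applying item~\eqref{t22} of Theorem~\ref{T2} to $L_i$ in $\widetilde S$ yields $M_{i-1}\cup\{p_i\}\cup M_i\subset cl_{\widetilde S}L_i$ and $L_i\subset\omega(L_i)=cl_{\widetilde S}L_i$; in particular $p_{i+1}\in cl_{\widetilde S}L_i$, so there is a sequence $q_n\in L_i\subset S-\partial S$ with $q_n\to p_{i+1}$.

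The remaining step is local at $p_{i+1}$: after extracting a subsequence the $q_n$ lie in a single sector of $p_{i+1}$ contained in $S$, and in the normal-form chart $\widetilde f(x,y)=(\lambda x,\lambda^{-1}y)$ one chooses integers $k_n\to\infty$ (with sign matching the expanding direction along $L_{i+1}$) so that the iterates $\widetilde f^{k_n}(q_n)$ stay in the chart, remain in $L_i$ by invariance, and converge to any prescribed point of $L_{i+1}$ near $p_{i+1}$---the expanding direction drives one coordinate to the target value on $L_{i+1}$ while the contracting direction forces the other to $0$. Hence $L_{i+1}$ lies in $cl_{\widetilde S}L_i$ near $p_{i+1}$, and invariance of $L_{i+1}$ and $cl_{\widetilde S}L_i$ promotes this to $L_{i+1}\subset cl_{\widetilde S}L_i$. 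Symmetrically $L_i\subset cl_{\widetilde S}L_{i+1}$, so $cl_{\widetilde S}L_i=cl_{\widetilde S}L_{i+1}$; induction around $C$ and intersection with $S$ conclude. The main obstacle I expect is the rigorous verification of the non-connection transfer principle via $\iota$, since branches of $\widetilde f$ at the $p_i$ can cross $\partial S$ and a careful case analysis is needed to identify which invariant manifolds of fixed points in $S'$ could possibly produce a new connection in $\widetilde S$.
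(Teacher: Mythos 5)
Your proof is correct and follows the paper's route: pass to the double $\widetilde S$, apply items \eqref{t22}--\eqref{t24} of Theorem~\ref{T2}, and then handle the boundary saddles by walking around $C$. Two small remarks. First, for item~\eqref{t74} the paper finishes by invoking Lemma~\ref{L12} a second time (via Lemma~\ref{L10}.\eqref{l10b}) to pass from $C_{12}\subset cl_SL_1$ to $L_2\subset cl_SL_1$; your explicit $\lambda$-lemma computation at $p_{i+1}$ is a hand re-derivation of that same step and is fine, just less economical. Second, your ``transfer principle via $\iota$'' is the one genuinely shaky spot, and you are right to flag it: as written it does not work, since if $L_i\subset W^u(q')$ with $q'$ interior to $S'$, then $\iota$ carries this to $L_i'\subset W^u(\iota(q'))$, a statement about a curve in $S'$, not a connection for $f$ on $S$. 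The correct mechanism is the accumulation lemma (Lemma~\ref{L9}): $\partial S$ is a compact connected $\widetilde f$-invariant set, so any branch of a fixed point in $S'-\partial S$ which touched $\partial S$ would be contained in it, an impossibility; hence branches emanating from $S'-\partial S$ stay in $S'-\partial S$, and the only invariant manifolds that can contain $L_i\subset S-\partial S$ come from fixed points in $S$, so ``connection in $\widetilde S$'' for $L_i$ coincides with ``connection in $S$''. The paper leaves this implicit; once it is in place your argument is complete.
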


\begin{Theorem}\quad\label{T8}

Let $S$ be a compact connected orientable surface with boundary.
Let $S_0\subset S$ be a submanifold with compact boundary 
$\partial S_0\subset \partial S$
 and let $f,f^{-1}:S_0\to S$
be an orientation preserving and area preserving homeomorphism of $S_0$ onto
open subsets $fS_0$, $f^{-1}S_0$ of $S$ with $f(\partial S_0)\subset \partial S_0$.
\begin{enumerate}
\iitem\label{t81}
Let $p\in S_0-\partial S$ be a fixed point of $f$ of saddle type
and let $L_1$, $L_2$ be adjacent and invariant branches of $p$ such that 
$cl_S L_i\subset S_0$, $i=1,2$ and that both branches accumulate 
on the sector bounded by them.
If in addition $S$ has genus 0, 
then $L_1\cap L_2\ne\emptyset$.

\iitem\label{t82}
Let $p\in S_0-\partial S$ be a fixed point of $f$ of saddle type.
Assume that the branches of $p$ are invariant, with closure in $S$
included in $S_0$.
Assume also that each branch of $p$ is not a connection, accumulates 
of its adjacent sectors and that all the branches of $p$
have the same closure in $S$.
If in addition $S$ has genus 0 or 1,
then the four branches of $p$ have homoclinic points.

\iitem\label{t83} 
Let $C$ be a connected component of $\partial S_0$ and suppose that 
all fixed points $p_1,\ldots,p_{2n}$ of $f$ in $C$
are of saddle type.
Let $L_i$ be the branch of $p_i$ contained in
$S-\partial S$.
Assume that for every $i$, $L_i$ is not a connection, $L_i$ accumulates 
on  both of its adjacent sectors and that
$cl_SL_i=cl_SL_j\subset S_0$ for every pair $(i,j)$.

If in addition $S$ has genus 0, then every pair $(L_i, L_j)$ of stable and 
unstable branches intersect. 
The same happens if the genus of $S$
is 1 provided that there are at least $4$ fixed points in $C$.
\end{enumerate}

\end{Theorem}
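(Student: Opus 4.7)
The plan is to carry the arguments used in Theorem~\ref{T6} over to the partially defined setting. The hypotheses $cl_SL_i\subset S_0$ in each item are precisely what is needed so that the branches stay inside the domain of $f$ under all iterates, and so that $f^n$ can be applied freely to arcs and compact sets lying on or very near these branches. Once one has this, homoclinic/heteroclinic points are produced by assuming the branches in question are disjoint and then constructing a topological disk bounded by pieces of invariant manifolds and an arc transverse to the dynamics, whose iterates contradict area preservation.

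For item~\eqref{t81} I would argue by contradiction, assuming $L_1\cap L_2=\emptyset$. Up to relabeling, let $L_1$ be unstable and $L_2$ be stable, straighten $f$ near $p$ via continuous coordinates $(x,y)\mapsto (\lambda x,\lambda^{-1}y)$, and let $\Sigma$ be the sector bounded by the adjacent local branches $\ell_1\subset L_1$, $\ell_2\subset L_2$. Since $L_2$ accumulates on $\Sigma$, by Theorem~\ref{T7}.\eqref{t71} (which applies because $cl_SL_2\subset S_0$ and all fixed points in $cl_SL_2$ are non-degenerate) we may pick a long arc $\gamma\subset L_2$ returning from $\ell_2$ into $V$ at a point $q$ arbitrarily close to $\ell_1$ inside $\Sigma$. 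The concatenation of $\gamma$ with the short arc of $\ell_2$ between its endpoints, together with a short transversal arc $\eta$ joining $q$ to a point of $\ell_1$ inside $\Sigma$ and then a piece of $\ell_1$ back to $p$, is a Jordan curve $J$. Since $S$ has genus $0$, $J$ bounds a topological disk $D\subset S$ on the side containing $\eta$. The hypothesis $cl_SL_i\subset S_0$ and $f(\partial S_0)\subset\partial S_0$ imply $D\subset S_0$, so the positive iterates $f^n(D)$ are defined. The dynamics in the linearizing coordinates force $f^n(D)\subsetneq D$ for all large $n$ (the arc of $\eta$ is pushed strictly into $D$ by the contraction along $L_2$), contradicting $f_*\mu=\mu$.

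For item~\eqref{t82}, since the four branches have a common closure in $S$ and each accumulates on both adjacent sectors, every pair of adjacent branches $(L_i,L_{i+1})$ satisfies the hypothesis of \eqref{t81}. In genus $0$, \eqref{t81} gives $L_i\cap L_{i+1}\neq\emptyset$ for each adjacent pair, producing a homoclinic point on each of the four branches. In genus $1$ the same Jordan-disk argument works as long as the Jordan curve $J$ built from adjacent $L_i,L_{i+1}$ bounds a disk; the only obstruction is that $J$ could be non-separating, but then its homology class would be fixed under $f$, and carrying out the construction with the four sectors simultaneously shows that not all four such $J$ can be non-separating (they would generate too large a subspace of $H_1(S;\mathbb Z)\cong\mathbb Z^2$), giving at least one pair that bounds a disk and hence a homoclinic intersection; by the common-closure hypothesis and iterating, all four branches carry homoclinic points.

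For item~\eqref{t83}, fix two interior branches $L_i$ and $L_j$ of opposite stability type from saddles $p_i,p_j\in C$; since $cl_SL_i=cl_SL_j\subset S_0$ and both accumulate on their adjacent sectors, the same disk construction as in \eqref{t81} applies, using an arc inside $C\subset\partial S_0$ to close up when $p_i\neq p_j$. In genus $0$ every resulting Jordan curve bounds a disk in $S$, yielding intersection. In genus $1$, if there are only two saddles on $C$ the resulting Jordan curves could all be non-separating and parallel, and no contradiction arises; but with at least four saddles on $C$, the alternating stable/unstable pattern around $C$ produces four distinct Jordan curves whose homology classes cannot all be parallel in $H_1(S;\mathbb Z)$, so at least one bounds a disk, and combining with the common closure $cl_SL_i=cl_SL_j$ propagates intersection to every stable/unstable pair.

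The main obstacle is the genus-$1$ case in items~\eqref{t82} and \eqref{t83}: there the contradiction is not a direct area argument on a disk but requires the homological observation that on the torus the family of Jordan curves built from the branches cannot all represent non-trivial, mutually parallel classes, so that at least one of them separates and the area-preservation argument of \eqref{t81} can be invoked. The partially-defined setting adds the bookkeeping that $cl_S L_i\subset S_0$ and $f(\partial S_0)\subset\partial S_0$ must be used repeatedly to ensure every disk and every iterate remains in $\mathrm{dom}(f)$.
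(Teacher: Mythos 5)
Your approach diverges from the paper's in two places, and both divergences contain genuine gaps.

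First, in item~\eqref{t81} you argue by building a Jordan curve $J$ from a stable arc, two local branch segments (one stable, one unstable) and a transversal, letting $D$ be the disk it bounds, and then claiming $f^n(D)\subsetneq D$. This is not correct: the boundary of $D$ contains a piece of the \emph{unstable} local branch $\ell_1$, and under $f^n$ this piece expands along $L_1$ away from $p$, so $f^n(D)$ protrudes beyond $J$ on the unstable side rather than being absorbed into $D$. The paper avoids any area argument here. It instead constructs \emph{two} simple closed curves $\Gamma_1=L_1[p,q_1]\cup\gamma_1$ and $\Gamma_2=L_2[p,q_2]\cup\gamma_2$, where $q_1$ is the first return of the unstable branch to the sector $\Sigma$ entering through $En(\Sigma)$ and $q_2$ the first return of the stable branch entering through $Ex(\Sigma)$, with $\gamma_1,\gamma_2$ short segments inside $\Sigma$ joining these points to $p$. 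In genus $0$ the curve $\Gamma_1$ separates $S$; the local branch of $L_2$ and the arc $\gamma_2$ lie on opposite sides of $\Gamma_1$, so $\Gamma_2$ must cross $\Gamma_1$ at a point $q\ne p$, and since $\Gamma_1\cap\gamma_2=\emptyset$ and $\Gamma_2\cap\gamma_1=\emptyset$ the crossing lies in $L_1[p,q_1]\cap L_2[p,q_2]$. No nested-disk contradiction is needed or used.

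Second, your treatment of genus $1$ in items~\eqref{t82} and \eqref{t83} is not a proof. You claim that if the four Jordan curves were all non-separating ``they would generate too large a subspace of $H_1(S;\mathbb Z)\cong\mathbb Z^2$,'' and hence at least one bounds a disk. Four nonzero classes in $\mathbb Z^2$ need not satisfy any such constraint (e.g.\ $(1,0),(1,0),(0,1),(0,1)$), so no contradiction follows, and the conclusion ``at least one bounds a disk'' is unsupported. The paper's genus-$1$ argument is structurally different: it passes to the $3$-fold cyclic cover $\pi\colon E\to S$ obtained by restricting $\mathbb R^2/(3\mathbb Z\times\mathbb Z)\to\mathbb R^2/\mathbb Z^2$, lifts $p$ to $p_1,p_2,p_3$, proves (Lemma~\ref{l47}, using the accumulation Lemma~\ref{acclem2}) a dichotomy ensuring all lifts' branches meet a common sector or are kept separated, and then builds closed curves $\Gamma^u_i,\Gamma^s_i$ for each lift. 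Assuming no homoclinics yields the intersection pattern $|\#(\Gamma^u_i,\Gamma^s_j)|=\delta_{ij}$ in $H_1(\mathbb T^2)$. Since $H_1(\mathbb T^2)$ has rank $2$, the three classes $\Gamma^s_1,\Gamma^s_2,\Gamma^s_3$ are linearly dependent, and pairing a dependence relation against the corresponding $\Gamma^u$ forces one of the $\pm1$ entries to vanish, a contradiction. Item~\eqref{t83} in genus $1$ uses the same covering together with curves that pass through arcs of the boundary circle $C^n$, and the requirement of at least $4$ fixed points in $C$ is exactly what allows alternating stable/unstable boundary branches so that the two families $\Gamma^n_{13}$ (stable) and $\Gamma^n_{24}$ (unstable) can be formed with $|\#(\Gamma^n_{13},\Gamma^n_{24})|=1$. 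None of this structure appears in your proposal; the passage to the cyclic cover is the essential new idea, not a homological count of four curves downstairs.
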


\subsection{The double of a surface.}
\label{ss41}
\quad

In order to prove Theorem~\ref{T6} we would like to use 
the previous results for surfaces without boundary.
For this we are going to work with the double of $S$.

Let $\sim$ be the equivalence relation defined by the 
partition of $S\times \{0,1\}$ into one point sets
$\{(p,i)\}$ if $p\notin\partial S$, $i\in\{0,1\}$ and two point sets 
$\{(p,0),(p,1)\}$ if $p\in \partial S$.
Let $S'=S\times\{0,1\}/\sim$ equipped with the quotient topology.
The surface $S'$ is obtained by gluing two copies of $S$ together along their
common boundary and it is called the {\it double} of $S$.
The quotient map $S\times\{0,1\}\to S'$ is closed implying that $S'$ 
is a Hausdorff space.
If $S$ is compact, connected and orientable, then so is $S'$.
The mapping $\iota:S\to S'$ defined by $\iota(p)=[(p,0)]$ 
is a homeomorphism from $S$ onto a closed subset $\iota(S)$
of $S'$. So we just think of $S$ as a subset of $S'$.
Since $\iota(S)$ is closed in $S$ we have that if $A\subset S$
then $cl_SA=cl_{S'}A$.

The measure $\mu$ can be ``doubled'' to a measure $\mu'$ on $S'$
by taking the measure $\nu(\{0\})=\nu(\{1\})=1$ on $\{0,1\}$,
the product measure $\mu\times \nu$ on $S\times\{0,1\}$
and the pushforward by the quotient map.
Homeomorphisms of $S$ extend naturally to homeomorphisms
of $S'$ by defining $f'[(x,i)]=[(f(x),i)]$. If $f$ is orientation
preserving and area preserving, the so is $f'$.

The set $S'$ has the structure of a surface without boundary.
Charts for neighborhoods of points $[(p,i)]\in S'$ with 
$p\in\partial S$ may be obtained in the following way.
Let $B$ be an open ball with center at $(0,0)$,
$B^+=\{(x,y)\in B\,|\,y\ge 0\}$ and 
$\phi:B^+\to S$ a chart for a point $p=\phi(0,0)\in\partial S$.
Define $\phi':B\to S'$ by $\phi'(x,y)=[(\phi(x,y),0)]$
if $y\ge 0$ and $\phi'(x,y)=[\phi(x,-y),1)]$ if $y\le 0$.
It follows from our definitions that if $p$ is a saddle in $\partial S$
then it is a saddle in $S'$.

\subsection{Proof of Theorem~{\ref{T7}}: the closure of invariant manifolds.}

\subsection{Proof of item~{\eqref{t71}}: auto accumulation of invariant manifolds.}
\label{ss43}
\quad

Let $L$ be an invariant branch of $f$ such that all fixed points of $f$
contained in $cl_SL$ are non degenerate. 
Let $S'$ be the double of $S$ and $f'$ the extension of $f$ to $S'$.
As a branch of $f'$, $L$ is  invariant by $f'$ and
all fixed points of $f'$ contained in $cl_{S'}L$ are non degenerate.
By item~\eqref{t22} of Theorem~\ref{T2},
item~\eqref{t71} holds.

\subsection{Proof of items~\eqref{t72} and \eqref{t73}: equal closure of invariant manifolds.}
\label{ss44}
\quad

Now let $p\in S -\partial S$ be a fixed point of $f$ of saddle type
and let $L_1$ and $L_2$ be invariant adjacent branches of $p$ that are not 
connections and assume that all fixed points of $f$ contained in 
$cl_S(L_1\cup L_2)$ are non degenerate. By item~\eqref{t23}
of Theorem~\ref{T2} applied to $f'$ we have that $cl_SL_1=cl_S L_2$.

That all the branches in item~\eqref{t73} have the same closure 
follows immediately from item~\eqref{t72}.

\subsection{Proof of item~\eqref{t74}: the case of  boundary fixed points. }
\label{ss45}
\quad

Let $C$ be a connected component of $\partial S$ and suppose that 
all fixed points of $f$ in $C$ are of saddle type.
Let $p_1,\ldots,p_{2n}$ be these fixed points and $L_1,\ldots,L_{2n}$
their branches contained in $S-\partial S$. We are assuming that for
every $i$ all fixed points of $f$ contained in $cl_SL_i$ are non degenerate
and that $L_i$ is not a connection.

We need to show that all these branches have the same closure, and for this
it is enough to show that this happens for any pair of branches whose
corresponding fixed points have a connection contained in $C$ as a common branch.

\begin{Lemma}\label{L12}
Let $L_1$ and $L_2$ be any branches of $f$ possibly from different periodic points in $S$.
If $L_1$ accumulates on a sector adjacent to $L_2$ then $L_2\subset cl_SL_1$
(for surfaces with or without boundary).
\end{Lemma}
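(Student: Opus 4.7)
The plan is to linearize at the saddle $q$ of which $L_2$ is a branch, use the accumulation hypothesis to pull back iterates of a sequence in $L_1$ onto every portion of the local stable branch of $q$, and then spread that information along all of $L_2$ by invariance of $cl_SL_1$.

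First I would fix $N$ large enough that $q$ is fixed by $f^N$, that both $L_1$ and $L_2$ are $f^N$-invariant, and that the saddle multiplier $\la$ in the linearizing chart at $q$ is positive (the last condition is achieved by replacing $N$ by $2N$ if needed). Since $f^N$ is a homeomorphism, $cl_SL_1$ is also $f^N$-invariant. If $L_2$ is an unstable branch I would work with $f^{-N}$ instead, so I may assume $L_2$ is a local stable branch at $q$. If $\partial S\ne\emptyset$ I first embed $S$ into its double $S'$ of Section~\ref{ss41}; the saddle, its branches and the accumulation all survive in $S'$, and $cl_{S'}L_1=cl_SL_1$ because $S$ is closed in $S'$, so I may reduce to $\partial S=\emptyset$.

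Next I fix saddle coordinates on a neighborhood $V=B_\de$ of $q$ in which $f^N(x,y)=(\la x,\la^{-1}y)$, $\la>1$, with the local part of $L_2$ adjacent to $\Si$ being $\{(0,y):0<y<\de\}$ and $\Si\cap V=\{(x,y):0<x<\de,\;0<y<\de\}$ (the case $\Si\cap V=\{x<0,\,y>0\}$ is handled symmetrically). The accumulation hypothesis supplies a sequence $z_n=(a_n,b_n)\in L_1\cap\Si$ with $a_n,b_n>0$ and $z_n\to q$.

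Now fix any $y\in L_2$. Because $L_2$ is a stable branch, there is $k\ge 0$ with $y_0:=f^{kN}(y)=(0,b^*)$ for some $b^*\in(0,\de)$. Set $m_n=\lfloor\log(b^*/b_n)/\log\la\rfloor$. Every intermediate iterate $f^{-jN}(z_n)=(\la^{-j}a_n,\la^jb_n)$, $0\le j\le m_n$, stays inside $V$ because its first coordinate lies in $(0,a_n]$ and its second in $(0,b^*]$, both below $\de$ once $n$ is large; so the linearizing formula really applies along the whole backward orbit. At the endpoint $w_n:=f^{-m_nN}(z_n)$ one has $\la^{m_n}b_n\to b^*$ and $\la^{-m_n}a_n\sim a_nb_n/b^*\to 0$, hence $w_n\to y_0$ in $S$. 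By $f^N$-invariance of $L_1$ each $w_n$ belongs to $L_1$, so $y_0\in cl_SL_1$; and by $f^N$-invariance of $cl_SL_1$, $y=f^{-kN}(y_0)\in cl_SL_1$. Since $y\in L_2$ was arbitrary, this gives $L_2\subset cl_SL_1$.

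The main technical point will be the combined estimate for $m_n$ that simultaneously drives $\la^{m_n}b_n$ to $b^*$ and $\la^{-m_n}a_n$ to $0$ while the backward orbit of $z_n$ never leaves the linearization chart; the choice $m_n=\lfloor\log(b^*/b_n)/\log\la\rfloor$ settles both at once. The remaining reductions---to $\la>0$, to $L_2$ stable, and to $\partial S=\emptyset$ via the double---are routine.
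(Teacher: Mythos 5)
Your approach is genuinely different from the paper's, which is a one-line citation: the paper takes $K=cl_{S'}L_1$ in Lemma~\ref{L10}.\eqref{l10b} and concludes immediately, since $L_2$ cannot lie in a component of $S-K$ together with a sector that $L_1$ (hence $K$) visits. You instead try to hit every point of $L_2$ by an explicit backward-iteration computation in linearizing coordinates. The reductions you perform (to $f^N$ with $\la>1$, to $L_2$ stable, to $\partial S=\emptyset$ via the double) are all fine, as is the verification that the backward orbit $f^{-jN}(z_n)$, $0\le j\le m_n$, stays inside the linearizing chart.

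The gap is in the claim ``$\la^{m_n}b_n\to b^*$''. With $m_n=\lfloor \log(b^*/b_n)/\log\la\rfloor$, write $\log(b^*/b_n)/\log\la=m_n+\th_n$ with $\th_n\in[0,1)$; then $\la^{m_n}b_n=b^*\la^{-\th_n}$, which lies in the interval $(b^*/\la,\,b^*]$ but converges to $b^*$ only if $\th_n\to 0$. Nothing forces $\th_n\to 0$: e.g.\ if $b_n=b^*\la^{-n-1/2}$ then $\th_n\equiv 1/2$ and $\la^{m_n}b_n\equiv b^*\la^{-1/2}\ne b^*$. So $w_n$ need not converge to $y_0$, and your proof as written does not show $y_0\in cl_SL_1$. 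What your computation does prove, after passing to a subsequence along which $\th_n$ converges, is that some point $(0,b')$ with $b'\in[b^*/\la,b^*]$ lies in $cl_SL_1$, i.e.\ that $cl_SL_1\cap L_2\ne\emptyset$. To pass from this to $L_2\subset cl_SL_1$ you still need the accumulation lemma (Lemma~\ref{L9}, applied to a power of $f$ for which $L_2$ and $cl_SL_1$ are invariant) --- which is the very ingredient the paper's proof uses, via Lemma~\ref{L10}. Once you add that step, the argument closes; without it, the deduction ``so $y_0\in cl_SL_1$'' is unjustified.
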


\begin{proof}\quad

Just take $K=cl_{S'}L_1$ and $L=L_2$ in part~\eqref{l10b} in Lemma~\ref{L10}.

\end{proof}

Let $p_1$ and $p_2$ be fixed points of $f$ in $C$, and  let $L_1$ and $L_2$ be their branches
contained in $S-\partial S$.
It suffices to consider the case when there is a connection $C_{12}$ from $p_1$ to $p_2$.
By item~\eqref{t71} of Theorem~\ref{T7}, the branch $L_1$ accumulates on both sectors of $p_1$ adjacent to itself and therefore
by Lemma~\ref{L12}, we have that $C_{12}\subset cl_SL_1$. From this we have that 
$L_1$ accumulates on the sector of $p_2$ that has $C_{12}$ and $L_2$ as adjacent branches.
By Lemma~\ref{L12} again we have that $L_2\subset cl_SL_1$. It follows that $cl_SL_1=cl_SL_2$.

\subsection{Proof of Theorem~\ref{T8}:  homoclinics.}
\quad

\subsection{Proof of item~\eqref{t81}: homoclinics for specified branches in genus 0.}
\label{ssp81}
\quad

Assume now that $S$ has genus $0$.
Replacing $f$ by $f^2$ if necessary, we can assume that the branches 
$L_1$ and $L_2$ are fixed.

Consider a system of coordinates
in a neighborhood $V$ of $p$ with $p$
at the origin and in which $f(x,y)=(\la^{-1}x,\la y)$ with 
$0<\la<1$, $y\ge 0$.
We assume that $L_1$ is the unstable branch that 
contains $\{(x,y)\in V\,|\, y=0,\,x>0\}$ and $L_2$ is 
the stable branch that contains $\{(x,y)\in V\,|\,x=0,\,y>0\}$.
Since the branches $L_1$, $L_2$ are invariant and contained in $S_0$, we have that 
all their iterates by $f$ and $f^{-1}$ are defined, i.e. 
$$
\forall n\in \Z \quad \forall i\in\{1,2\} \quad f^n(L_i)\subset S_0.
$$

By hypothesis
the branches $L_1$ and $L_2$ accumulate on the sector
\linebreak
$\{(x,y)\in V\,|\, x>0,\,y>0\}.$ Let
\begin{gather}
\Si=\{(x,y)\in V\,|\, 0<xy\le \la^2,\, 0<x\le 1,\,0<y\le 1\},
\notag
\\
En(\Si)=\{(x,y)\in\Si\;|\; \la<y\le 1\},
\label{ens}
\\
Ex(\Si)=\{(x,y)\in\Si\;|\; \la<x\le 1\}.
\notag
\end{gather}

Note that $En(\Si)$ and $Ex(\Si)$ are disjoint.
From the dynamics of $f$ in $\Si$ we see
that every orbit of $f$ that visits $\Si$ must enter 
$\Si$ through $En(\Si)$, meaning that if
$x\notin\Si$ and $f^n(x)\in\Si$ for some positive
$n$, then for the smallest $n$ with this property
$f^n(x)\in En(\Si)$. In the same way every orbit
of $f^{-1}$ that visits $\Si$ must enter $\Si$
through $Ex(\Si)$.

We start at $p$ and move along the unstable 
branch $L_1$, let $q_1$ be the first point of $L_1$
to intersect $\Si$. Since $L_1$ is unstable we have that 
$q_1\in En(\Si)$. Join $q_1$ and $p$ by a line segment
$\ga_1\subset int_S\Si$ and consider 
$\Ga_1=L_1[p,q_1]\cup \ga_1$, where $L_1[p,q_1]$
is the segment of $L_1$ between $p$ and $q_1$.
Observe that 
$\Ga_1$ is a simple closed curve that separates $S$
into two connected open sets whose common frontier in $S$ is $\Ga_1$.

If we start at $p$ and move along the stable branch $L_2$,
let $q_2$ be the first point of $L_2$ to intersect $\Si$.
Since $L_2$ is stable we have that $q_2\in Ex(\Si)$.
Join $q_2$ and $p$ by a line segment $\ga_2\subset int_S\Si$
and consider $\Ga_2=L_2[p,q_2]\cup \ga_2$, where 
$L_2[p,q_2]$ is the segment of $L_2$ between $p$ and $q_2$.
Then
$\Ga_2$ is also a simple closed curve in $S$.
Note that the local branch of $L_2$ and $\ga_2$ are in different components
of $S-\Ga_1$. From this we conclude that $\Ga_1$ and $\Ga_2$
must intersect again at a point $q$ different from $p$.
Since $\Ga_1\cap\ga_2=\emptyset$ and $\Ga_2\cap \ga_1=\emptyset$
we have that $q\in L_1[p,q_1]\cap L_2[p,q_2]$.

\subsection{Proof of item~\eqref{t82}: homoclinics in genus 0 and 1.}
\label{ssp82}
\quad

The existence of  homoclinic points if the genus of $S$ is $0$
follows immediately from item~\eqref{t81}.
Consider now the case when $S$ has genus $1$.

We may assume that $S$ is the two dimensional torus $\T^2$ minus a finite union
of (sets homeomorphic to) open disks. By restricting the canonical 3-fold covering
\begin{equation}\label{3fold}
\re^2/(3\Z\times\Z)\to\re^2/\Z^2
\end{equation}
 we obtain a 3-fold covering 
$\pi:E\to S$ whose group of deck transformations is generated by the rotation
$T(x,y)=(x+1,y)$. Let $F:E\to E$ be a lifting of $f$ such that the three points
in $\pi^{-1}\{p\}$ are fixed.

Let $\pi^{-1}(p)=\{p_1,p_2,p_3\}$. If $L$ is a branch of $p$ then 
$\pi^{-1}(L)=L_1\cup L_2\cup L_3$ where every $L_i$ is a branch of $p_i$,
$L_2=TL_1$ and $L_3=TL_2$. In order to prove that $p$ has homoclinic
points, it is enough to prove that for some pair $(i,j)$ an unstable branch 
of $p_i$ intersects a stable branch of $p_j$. The branches of $p$ have the 
same closure in $S$. We prove in Lemma~\ref{l47} that the same happens with the points $p_i$.

 Let $K_i$
be the closure in $E$ of the branches of each $p_i$.

The following version of the accumulation lemma is proven in Theorem~4.3 of~\cite{OC2}.

\begin{Lemma} {\cite[Th. 4.3]{OC2}}\label{acclem2}\quad

 Let $S$ be a  connected surface with compact boundary provided with 
a Borel measure $\mu$ such that open non-empty subsets have positive
measure and compact subsets have finite measure.
Let $S_0\subset S$ be an open subset with $fr_SS_0$ compact.


Let $f,f^{-1}:S_0\to S$ be an area preserving homeomorphism of $S_0$
onto an open subsets $f(S_0)$, $f^{-1}(S_0)$ of $S$.
Let $K\subset S_0$ be a compact connected invariant subset of $S_0$.

If $L\subset S_0$ is a branch of $f$ and $L\cap K\ne \emptyset$,
then $L\subset K$.

\end{Lemma}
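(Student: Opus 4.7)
The plan is to adapt Mather's accumulation argument (Corollary~8.3 of~\cite{Mat9}) to the partial-map setting. After replacing $f$ by $f^2$ if necessary, I may assume the branch $L$ is $f$-invariant. First I fix a point $x\in L\cap K$: since both $L$ and $K$ are invariant and contained in $S_0$, the full orbit $\{f^n(x)\}_{n\in\Z}$ stays in $L\cap K$, and it accumulates at the saddle $p$ of which $L$ is a branch; closedness of $K$ in $S$ then gives $p\in K$.

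Suppose for contradiction $L\not\subset K$. Let $J$ be a connected component of the nonempty open set $L-K$, and parameterize $L$ as $\phi:(0,\infty)\to L$ with $f\circ\phi(t)=\phi(\la t)$, $|\la|>1$, so that $J=\phi((a,b))$. By invariance of $L$ and $K$, $f(J)=\phi((\la a,\la b))$ is again a component of $L-K$. The equation $f(J)=J$ would force $\la a=a$ and $\la b=b$, hence $J=L$, contradicting $L\cap K\ne\emptyset$. Therefore $J\cap f(J)=\emptyset$, and iterating, the family $\{f^n(J):n\in\Z\}$ is pairwise disjoint. Now I pick a compact subarc $J_0\subset J$ and thicken it to a small closed topological disk $D\subset S_0-K$ transverse to $L$. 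Using the linear picture of $f$ near $p$ --- expansion by $\la$ along $L$ and contraction by $\la^{-1}$ transversely --- the transverse width of $D$ can be chosen small enough that the iterates $\{f^n(D):n\in\Z\}$ are pairwise disjoint. Area preservation gives $\mu(f^n(D))=\mu(D)>0$ for every $n$; if the entire family lies inside a subset of $S$ of finite $\mu$-measure, the resulting infinite disjoint union contradicts finiteness of $\mu$ on compacta.

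The hard part is precisely this last containment step. Because $L\subset S_0$ but $cl_SL$ may meet $fr_SS_0$ and $\om(L)$ need not lie in $S_0$, the forward iterates $f^n(D)$ can leak toward $fr_SS_0$, while for $n\to-\infty$ the transverse dimension of $f^n(D)$ grows like $\la^{|n|}$ and risks intruding on $K$ near $p$. The proof in~\cite{OC2} overcomes this by combining compactness of $fr_SS_0$, uniform continuity of $f^{\pm 1}$ on compact subsets of $S_0$, and a careful choice of a non-uniform transverse thickening of $J_0$ whose full two-sided orbit of iterates is confined to a fixed compact subset of $S_0-K$; this is the delicate bookkeeping that makes the partial-map version possible.
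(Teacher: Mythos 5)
Your proposal correctly establishes the easy part: since $L$ and $K$ are invariant and share a point, the saddle $p$ lies in $K$; any component $J$ of $L-K$ is a bounded arc $\phi((a,b))$ with $0<a<b<\infty$; and the iterates $f^n(J)$, $n\in\Z$, are pairwise disjoint. That reduction is sound.

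The content of the lemma, however, is concentrated in your final paragraph, where the proposal asserts rather than proves, and the mechanism you invoke cannot work as stated. Write the thickening in linearizing coordinates as $D=\{(x,y): x\in J_0,\ |y|\le\varepsilon(x)\}$ with $\varepsilon>0$. The transverse width of $f^{-n}(D)$ at $f^{-n}(x)$ is $\lambda^{n}\varepsilon(x)$, so requiring the full backward orbit of $D$ to remain in a fixed chart of radius $\delta$ about $p$ forces $\lambda^{n}\varepsilon(x)\le\delta$ for all $n\ge 0$ and $x\in J_0$, hence $\varepsilon\equiv 0$: there is no ``non-uniform transverse thickening'' of one compact arc whose entire orbit stays near $p$. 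Once $f^{-n}(D)$ exits the chart transversally you have no control over it, so neither the pairwise disjointness of $\{f^n(D)\}$ nor its containment in a set of finite $\mu$-measure is established; and since $\mu$ is only assumed finite on compacta, no contradiction arises without such containment. The same obstruction appears for the forward iterates drifting along $L$ toward $fr_SS_0$. (The paper itself only cites this lemma from \cite{OC2}, as it does Lemma~\ref{L9} from Mather and Franks--Le~Calvez, so there is no in-paper proof to compare against; but in those sources the measure contradiction is obtained from a topological separation argument in which the closed arcs $\overline{f^n(J)}$ act as crosscuts of residual domains of $K$ and cut off disjoint regions of controlled total area, rather than by iterating a single a priori two-dimensional thickening.) You have identified the right starting ingredients, but the argument stalls exactly at the step where the real work lies, and the route you sketch for closing it does not exist.
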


\begin{Lemma}\label{l47}
\quad

All the branches of each $p_i$ have the same closure $K_i$.
There are two disjoint alternatives:
\begin{enumerate}
\iitem\label{t631}
For every pair $(i,j)$ all branches of $p_i$ accumulate 
on all sectors of $p_j$ and $K_i=K_j$.
\iitem\label{t632}
There exist pairwise disjoint neighborhoods $V_i$ of $p_i$ such that 
$K_i\cap V_j=\emptyset$ if $i\ne j$.
\end{enumerate}
Alternative~\eqref{t631} holds when there exists a pair $(i,j)$ with $i\ne j$ for which a branch 
of $p_i$ accumulates on a sector of $p_j$.
\end{Lemma}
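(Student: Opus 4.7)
The plan is to work on the 3-fold cover $E$ with the lift $F$, to establish the equal-closure property for each $p_i$ via the same ideas as in Theorem~\ref{T7}, and then to use the deck equivariance $TK_i=K_{i+1}$ together with Lemma~\ref{L12} to cycle information among the three preimages of $p$. Two preliminary observations make the hypotheses automatic upstairs: a branch of $p_i$ cannot be a connection of $F$, because its image under the local homeomorphism $\pi$ would be a connection of $p$ in $S$, contradicting the hypothesis of item~\eqref{t82}; and every fixed point of $F$ lying in $cl_E L$ for a branch $L$ of $p_i$ projects to a non-degenerate fixed point of $f$, hence is itself non-degenerate since $\pi$ is a local homeomorphism.

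To show that every branch $L$ of $p_i$ has closure equal to a common set $K_i$, I would apply the partial-map analogue of item~\eqref{t71} of Theorem~\ref{T7} to obtain that $L$ accumulates on both adjacent sectors of $p_i$, and then Lemma~\ref{L12} to place the two branches of $p_i$ adjacent to $L$ inside $cl_E L$; a second application of the same pair of tools to those adjacent branches puts the opposite branch of $p_i$ in $cl_E L$ as well. Hence $cl_E L$ is independent of the chosen branch, and can be called $K_i$.

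Next, I split into two cases according as some branch $L_i$ of some $p_i$ does or does not accumulate on some sector of a distinct $p_j$. In the first case Lemma~\ref{L12} forces the two branches of $p_j$ adjacent to that sector to lie in $cl_E L_i=K_i$, whence $K_j\subset K_i$ by the equal-closure statement. The deck equivariance then gives $K_{j+n}\subset K_{i+n}$ for every $n$, and since $i-j\not\equiv 0\pmod 3$ the three inclusions coming from $n=0,1,2$ close up into $K_1=K_2=K_3$. To upgrade to the full statement of~\eqref{t631}, fix any branch $L$ of $p_i$ and any sector $\Sigma$ of $p_j$, and let $L'$ be a branch of $p_j$ adjacent to $\Sigma$: by equal-closure $L'\subset K_j=K_i=cl_E L$, while the analogue of item~\eqref{t71} supplies a sequence of points of $L'$ in $\Sigma$ converging to $p_j$, and each such point is a limit of points of $L$ that must eventually lie in the open set $\Sigma$, so $L$ accumulates on $\Sigma$.

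In the remaining case, $p_j\notin K_i$ for every pair $i\ne j$, and since each $K_i$ is closed and the three points $p_i$ are distinct, we can choose pairwise disjoint open neighborhoods $V_i$ of $p_i$ small enough to miss the finitely many closed sets $\{K_j:j\ne i\}$, giving~\eqref{t632}. The alternatives are mutually exclusive, because~\eqref{t631} forces each $K_i$ to contain all three $p_j$, while~\eqref{t632} prevents that. I expect the main technical point to be justifying the partial-map versions of item~\eqref{t71} of Theorem~\ref{T7} and of Lemma~\ref{L12} on $E$, since $F$ is only defined on $E_0=\pi^{-1}(S_0)$; the hypothesis that $cl_S(W^u_p\cup W^s_p)\subset S_0$ lifts to $cl_E(W^u_{p_i}\cup W^s_{p_i})\subset E_0$, which should be enough for these accumulation results to go through via Lemma~\ref{acclem2}.
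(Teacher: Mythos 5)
Your overall plan --- lift to the $3$-fold cover $E$, prove the common-closure property for each $p_i$, and exploit the deck equivariance $TK_i=K_{i+1}$ to compare the three preimages of $p$ --- is the same one the paper follows, and your way of closing the chain of inclusions $K_{j+n}\subset K_{i+n}$ ($n=0,1,2$) into $K_1=K_2=K_3$ is somewhat more direct than the paper's contradiction argument built around its Claim~\ref{cl47}.

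There is, however, a genuine error in your second preliminary observation, which propagates into both places where you invoke the analogue of item~\eqref{t71}. You assert that every fixed point of $F$ in $cl_E L$ is non-degenerate because it projects to a non-degenerate fixed point of $f$; but item~\eqref{t82} of Theorem~\ref{T8} --- the item whose proof this lemma serves --- does \emph{not} assume that fixed points of $f$ in $cl_S(W^u_p\cup W^s_p)$ are non-degenerate. That hypothesis belongs to items~\eqref{t73} and~\eqref{t63}, where it is used to derive the equal-closure and sector-accumulation properties; item~\eqref{t82} takes those properties as \emph{hypotheses}, precisely so that it applies to partially-defined maps where non-degeneracy in the whole closure cannot be guaranteed. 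Consequently the hypothesis of~\eqref{t71} is unavailable and you cannot apply it. The repair is that the conclusion you want from~\eqref{t71} --- that each branch $L^i_\a$ accumulates on both of its adjacent sectors --- is already assumed in~\eqref{t82} for the branches of $p$ in $S$, and this lifts verbatim to the branches of $p_i$ in $E$ because sector-accumulation is a local statement near $p$ and $\pi$ is a local homeomorphism at each $p_i$. That is exactly how the paper proceeds, appealing to Lemma~\ref{acclem2} directly rather than to~\eqref{t71}. (A further step you leave implicit, as does the paper, is that the absence of cross-accumulation on sectors gives $p_j\notin K_i$ for $i\ne j$; this requires a short argument via Lemma~\ref{L10}.\eqref{l10b} and the connectedness and non-triviality of $K_i$.)
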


In the case~\eqref{t632} it might happen that $K_i\cap K_j\ne\emptyset$, but we do not need this here.

\begin{proof}\quad

Let $L^i_\a$, $1\le\a\le4$ be the branches of $p_i$ and let $S^i_\a$ be the sectors of $p_i$ with 
\linebreak
$(L^i_\a\cup L^i_{\a+1})\cap B_\de\subset cl_{E}(S^i_\a)$ as in \eqref{sectors}.

We first prove that in alternative \eqref{t632} all the branches of $p_i$
have the same closure. We use alternative \eqref{t632} with $K_i$
being the closure of $\cup_{\a=1}^4L^i_\a$.  By hypothesis \eqref{t82},
 in $S$ all the branches of $p$ have the same closure.
 If $1\le \a,\be\le 4$ the branch $L^i_\a$ accumulates on some branch $L^j_\be$,
 i.e. $L^j_\be\cap cl_E(L^i_\a)\ne \emptyset$.
 The statement of alternative~\eqref{t632} implies that $j=i$.
 By Lemma~\ref{acclem2}, 
$L^i_\be\subset cl_E(L^i_\a)$.
Similarly $L^i_\a\subset cl_E(L^i_\be)$.
Thus $cl(L^i_\a)=cl_E(L^i_\be)= K_i$.

\begin{Claim}\label{cl47}\quad

If there is $\a$ and $i\ne j$ such that $L^i_\a$ accumulates on $L^j_\a$ 
then alternative~\eqref{t631} holds.
\end{Claim}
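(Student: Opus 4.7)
The plan is to combine the accumulation lemma (Lemma~\ref{acclem2}) with the equivariance of the deck transformation $T$ under the lift $F$, together with Lemma~\ref{L12}, to establish both $K_1=K_2=K_3$ and the required sector accumulations. Throughout, $T$ commutes with $F$, cyclically permutes $p_1,p_2,p_3$, satisfies $T(L^i_\be)=L^{i+1}_\be$ for a consistent labeling of branches, and therefore $T(K_i)=K_{i+1}$.

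First I would verify that for each $i$ all four branches of $p_i$ share the same closure $K_i$, by lifting the hypothesis of item~\eqref{t82} (each branch of $p$ accumulates on both its adjacent sectors) to $E$ and applying Lemma~\ref{L12}. Each $L^i_\be$ accumulates on the two sectors of $p_i$ adjacent to it, and for each such sector the other adjacent branch $L^i_{\be\pm 1}$ lies in $cl_E(L^i_\be)$ by Lemma~\ref{L12}. Iterating around the four branches yields $K_i=cl_E(L^i_\be)$ for every $\be$. The sets $K_i$ are compact (as closed subsets of the compact cover $E$), connected (since $\{p_i\}\cup\bigcup_\be L^i_\be$ is a union of connected sets sharing $p_i$), and $F$-invariant.

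Next I would propagate the hypothesis to obtain $K_1=K_2=K_3$. Because $cl_SL^i_\a\subset S_0$ by the hypothesis of~\eqref{t82}, the lifts satisfy $cl_E L^i_\a\subset E_0:=\pi^{-1}(S_0)$, so Lemma~\ref{acclem2} is available on the cover. The hypothesis gives $L^j_\a\cap K_i\neq\emptyset$, hence $L^j_\a\subset K_i$ and therefore $K_j=cl_E(L^j_\a)\subset K_i$ by the previous step. Applying $T^m$ yields $K_{j+m}\subset K_{i+m}$ for each $m\in\{0,1,2\}$; since $i\not\equiv j\pmod{3}$, these three inclusions cyclically close up and force $K_1=K_2=K_3=:K$.

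Finally, for the remaining half of alternative~\eqref{t631}, I would exploit the density of $L^i_\a$ in $K$. By the hypothesis of~\eqref{t82}, $L^j_\be$ accumulates on its adjacent sector $S^j_\be$, and $L^j_\be\subset K$; hence $K\cap S^j_\be$ has $p_j$ as a limit point. For any neighborhood $U$ of $p_j$ in $E$, pick $q\in K\cap S^j_\be\cap U$ and a neighborhood $V$ of $q$ with $V\subset S^j_\be\cap U$; since $q\in K=cl_E(L^i_\a)$, $V\cap L^i_\a\neq\emptyset$, so $L^i_\a$ has points in $S^j_\be$ arbitrarily close to $p_j$, which is the accumulation required. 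The main obstacle I foresee is bookkeeping: checking that Lemma~\ref{acclem2} and Lemma~\ref{L12} transfer cleanly to the partially defined lift on the $3$-fold cover $E$, and that the $T$-action on branches is labeled so as to preserve the index $\be$. Both points are routine once it is noted that $\pi$ is a local homeomorphism preserving local branch and sector structure and that $T$ is a deck transformation commuting with $F$, but they must be set up explicitly before the equivariance argument can be run.
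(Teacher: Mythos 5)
The central structural idea---use Lemma~\ref{acclem2} to upgrade the claim's hypothesis to an inclusion $L^j_\a\subset cl_E(L^i_\a)$, then apply the deck transformation $T$ to close a cycle of inclusions and force equality---is exactly the paper's. But your first step is not correct as written. You assert that each lift $L^i_\be$ accumulates on the two sectors of $p^i$ adjacent to it, obtained by ``lifting the hypothesis of item~\eqref{t82}.'' This does not lift. In $S$ the branch $L_\be$ accumulates on its adjacent sector; lifting tells you only that for each lift of that sector there is \emph{some} lift $L^m_\be$ accumulating on it, and there is no reason $m$ should equal the index of the base point of the sector. Indeed, the possibility $m\neq k$ is precisely what distinguishes alternative~\eqref{t631} from~\eqref{t632}, so assuming the matched indexing at the outset begs the very question the claim is designed to settle. (In the paper, the statement ``all branches of $p_i$ have the same closure'' is proved under alternative~\eqref{t632} by a separate argument that uses the non-crossing hypothesis of that alternative, and it is part of the conclusion---not a free input---in the case of~\eqref{t631}.)

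The paper's proof avoids this by ordering the steps differently: it first proves $cl_E(L^1_1)=cl_E(L^2_1)=cl_E(L^3_1)=:K$ directly from the claim's hypothesis, Lemma~\ref{acclem2}, and $T$-equivariance---no sector-lifting statement is needed for this part. Only then, for $\be\neq 1$, does it invoke ``\emph{some} $L^j_1$ accumulates on $L^k_\be$,'' which \emph{does} follow from the hypothesis in $S$, and it exploits that $cl_E(L^j_1)=K$ is already known for \emph{every} $j$, so the indeterminacy in $j$ is harmless. Your third step about sector accumulation suffers from the same imprecision (you write that $L^j_\be$ accumulates on $S^j_\be$, which again assumes the matched indexing), but it can be repaired the same way: quantify ``some lift,'' then use that all lifts share the closure $K$. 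Your second step and the cyclic-closure argument are sound once the first step is reorganized in the paper's order, starting from the single branch-label $\a$ given in the hypothesis rather than from all branches at once.
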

For simplicity assume that $\a=1$, $(i,j)=(1,2)$.
We have that $L^2_1\cap cl_E L^1_1\ne \emptyset$.
By Lemma~\ref{acclem2}, $L^2_1\subset cl_E(L^1_1)$.
Then $L^3_1=TL^2_1\subset Tcl_E(L^1_1)=cl_E(L^2_1)$.
Similarly
\begin{equation}\label{incL}
L^1_1\subset cl_E(L^3_1)\subset cl_E(L^2_1)\subset cl_E(L^1_1)=:K,
\end{equation}
and the inclusions in \eqref{incL} are equalities.
Now let $\be\ne 1$ and $k\in\{1,2,3\}$.
We have that some $L^j_1$ accumulates on $L^k_\be$ and hence 
$L^k_\be\subset cl_E(L^j_1)=K$. Also $L^k_\be$ accumulates on some 
$L^i_1$ and then $K=cl_E(L^i_1)\subset L^k_\be$.
This proves the claim.

The hypothesis in~\eqref{t82} implies that any branch of $p$ accumulates on all the sectors of $p$.
By Lemma~\ref{acclem2} if a branch $L^i_\a$ accumulates on a sector $S^j_\be$ then 
$L^j_\be\cup L^j_{\be+1}\subset cl_E(L^i_\a)$.

Suppose that some $L^i_\a$ accumulates on the sector $S^j_\be$ with $j\ne i$.
Alternative \eqref{t632} is just the negation of this assumption.
So it remains to prove that alternative \eqref{t631} holds.
Assume by contradiction that alternative \eqref{t631} does not hold.
For simplicity $(i,\a)=(1,1)$.
Then
\begin{equation}\label{Ljb+1}
L^j_\be\cup L^j_{\be+1}\subset cl_E(L^1_1).
\end{equation}
For some $k$ the branch $L^j_\be$ accumulates on $L^k_1$ and then
$L^k_1\subset cl_E(L^j_\be)\subset cl_E(L^1_1)$.
If $k\ne 1$ claim~\ref{cl47} implies that alternative~\eqref{t631} holds.
Therefore $k=1$. Hence $cl_E(L^j_\be)=cl_E(L^1_1)$.
Similarly 
$$
cl_E(L^j_{\be+1})=cl_E(L^j_\be)=cl_S(L^1_1).
$$

The branch $L^j_{\be+1}$ accumulates on some sector $S^k_{\be+1}$ and hence
\begin{equation}\label{Lkb+1}
L^k_{\be+1}\cup L^k_{\be+2}\subset cl_E(L^j_{\be+1})=cl(L^1_1).
\end{equation}
By claim~\ref{cl47} applied to $\a=\be+1$ we have that $k=j$.
The same argument as above using~\eqref{Lkb+1} instead of \eqref{Ljb+1} gives
$$
cl_E(L^j_{\be+2})=cl_E(L^j_{\be+1})=cl_E(L^j_\be)=cl_E(L^1_1).
$$
Repeating the argument once more we get that $cl_E(L^j_\be)=cl_E(L^1_1)$
for all $\be$.
In particular for $\be=1$, $cl_E(L^j_1)=cl_E(L^1_1)$.
Claim~\ref{cl47} for $\a=1$  implies that alternative~\eqref{t631} holds.
A contradiction.

\end{proof}

Assume that alternative~\eqref{t631} holds. 

Choose a sector $\Si$ of some $p_i$, say $p_1$, and continuous coordinates about $p_1$
with $p_1$ at $(0,0)$ such that $f(x,y)=(\la^{-1}x,\la y)$, $0<\la<1$. As before we will write $\Si$ as
$$
\Si=\{(x,y)\in V\,|\, 0<xy\le\la^2,\,0<x\le 1,\, 0<y\le 1\}
$$
and consider  the sets $En(\Si)$ and $Ex(\Si)$ defined  in \eqref{ens}.
We are going to denote the unstable branches of $p_i$ by
$W^u_+(p_i)$ and $W^u_-(p_i)$ and the stable ones by $W^s_+(p_i)$
and $W^s_-(p_i)$.

All branches of all points $p_i$ accumulate on $\Si$.

Let $q^u_+(p_i)\in W^u_+(p_i)$ and $q^u_-(p_i)\in W^u_-(p_i)$ be the first point
of each unstable branch to intersect $\Si$ as we move along the corresponding
branch starting from $p_i$. Both $q^u_+(p_i)$ and $q^u_-(p_i)$ belong to
$En(\Si)$. Join $q^u_+(p_i)$ and $q^u_-(p_i)$ by a small arc $\ga^u_i$ contained 
in $En(\Si)$. Let $W^u[q^u_-(p_i),q^u_+(p_i)]$ be the segment inside
$W^u(p_i)$ from $q^u_-(p_i)$ to $q^u_+(p_i)$.
Then we have that $\Ga^u_i=W^u[q^u_-(p_i),q^u_+(p_i)]\cup\ga^u_i$ is a 
simple closed curve that contains $p_i$.

In the same way let $q^s_+(p_i)\in W^s_+(p_i)$ and $q^s_-(p_i)\in W^s_-(p_i)$
be the first point of each stable branch to intersect $\Sigma$ as we move along 
the corresponding branch starting from $p_i$.
 Both $q^s_+(p_i)$ and $q^s_-(p_i)$ belong to $Ex(\Si)$.
Join $q^s_+(p_i)$ and $q^s_-(p_i)$ by a small arc $\ga^s_i$ contained in $En(\Si)$.
Let $W^s[q^s_-(p_i),q^s_+(p_i)]$ be the segment inside $W^s(p_i)$ from 
$q^s_-(p_i)$ to $q^s_+(p_i)$. Then we have that 
$\Ga^s_i=W^s[q^s_-(p_i),q^s_+(p_i)]\cup \ga^s_i$
is a simple closed curve that contains $p_i$.

We are going to assume that $p$ has no homoclinic points and derive a contradiction.

We have that $\ga^u_i\cap \ga^s_j=\emptyset$ for all pairs $(i,j)$.
Therefore $\Ga^u_i\cap \Ga^s_j=\emptyset$ if $i\ne j$ and 
$\Ga^u_i\cap \Ga^s_i=\{p_i\}$.

If $\a$ and $\be$ are oriented closed curves in $\T^2$ let 
$\#(\a,\be)$ be the oriented intersection number of $\a$ and $\be$.
Then $\#$ depends only on the homology classes of $\a$ and $\be$ and is a non degenerate
skew symmetric bilinear form over the integers defined on $H_1(\T^2)=\Z^2$.
We have that $E\subset \T^2$ and if we look at $\Ga^u_i$ and $\Ga^s_i$ as elements of 
$H_1(\T^2)$, then
\begin{equation}\label{intGa}
|\#(\Ga^u_i,\Ga^s_j)|=
\begin{cases}
1 &\text{if } i=j,
\\
0&\text{if } i\ne j.
\end{cases}
\end{equation}
The rank\footnote{In other surfaces their cyclic covers do not have the same Betti numbers as 
in the base and then the following argument does not apply.}
 of $H_1(\T^2)$ is $2$ and therefore $\{\Ga^s_1,\Ga^s_2,\Ga^s_3\}$ is 
linearly dependent. So one of them is a linear combination of the other two,
say $\Ga^s_1=a\Ga^s_2+b\Ga^s_3$. It follows that
$$
|\#(\Ga^u_1,\Ga^s_1)|\le|a||\#(\Ga^u_1,\Ga^s_2)|+|b||\#(\Ga^u_1,\Ga^u_3)|=0,
$$
which contradicts~\eqref{intGa}.

This proves the existence of a homoclinic point  if alternative~\eqref{t631} holds.

Suppose now that there exist pairwise disjoint neighborhoods $V_i$ of $p_i$
such that $K_i\cap V_j=\emptyset$ if $i\ne j$. 
The procedure is almost the same.
For each $i$ we consider a sector $\Si_i$ of $p_i$.
The branches of one $p_i$ do not intersect $\Si_j$ if $i\ne j$.
The curves $\Ga^u_i$ and $\Ga^s_i$ are constructed by considering 
the first point of each branch of $p_i$ to intersect $\Si_i$.
They also satisfy~\eqref{intGa}, and we derive a contradiction in the same way.

In order ot complete the proof of item~\eqref{t63},
we need to show that once $p$ has a homoclinic point
then the four branches of $p$ all have homoclinics points.
We say that two curves $\ga_1$, $\ga_2$ cross if they 
intersect at a point $q\in\ga_1\cap\ga_2$ and there is a 
contractible neighborhood $U$ of $q$ such that $U-\ga_1$ is disconnected and 
$U\cap \ga_2$ intersects different connected components of $U-\ga_1$.



\begin{Lemma}\label{Lall}
If two branches of $p$  cross, then all branches of $p$ cross.
\end{Lemma}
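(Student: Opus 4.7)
My plan is to reduce to a canonical configuration, propagate the crossing near $p$ by a topological $\lambda$-lemma exploiting the exact linear normal form in the linearizing coordinates, and then transfer the crossing to the other stable--unstable pairs by a direct graph computation.

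First I claim that two distinct branches of the same invariant manifold of $p$ cannot cross. In the continuous linearizing coordinates $(x,y)$ on a neighborhood $V$ of $p$, the local parts of two such branches occupy disjoint coordinate half-axes, so any shared point $q\ne p$ must lie outside $V$; but then by iteration along the common invariant manifold, $f^n(q)$ would converge to $p$ and eventually land in $V$ on the intersection of the two local axes, which is empty. Hence every crossing is between a stable and an unstable branch, and by relabeling the $\pm$ signs on the branches I may assume that $L^u_+$ and $L^s_+$ cross at some $q\ne p$. It suffices to prove that $L^u_+$ also crosses $L^s_-$; the other three stable--unstable pairs follow by the same argument applied to the appropriate relabeling. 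Invariance of $L^u_+$ and $L^s_+$ gives $\{f^n(q)\}_{n\in\Z}\subset L^u_+\cap L^s_+$, with $f^n(q)\to p$ along $L^s_+$ as $n\to+\infty$ and $f^{-n}(q)\to p$ along $L^u_+$ as $n\to+\infty$.

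Next I apply a topological $\lambda$-lemma in $V$. Fix a small arc $\gamma\subset L^u_+$ through $q$ realizing the crossing with $L^s_+$, so $\gamma$ has points on both sides of $L^s_+$ near $q$. In the linearizing coordinates $f$ acts as $(x,y)\mapsto(\lambda x,\lambda^{-1}y)$ exactly, so $f^n(\gamma)\subset L^u_+$ passes through $f^n(q)=(0,y_n)$ with $y_n\to 0^+$, is stretched by $\lambda^n$ in $x$ and compressed by $\lambda^n$ in $y$, and still contains points on both sides of the $y$-axis. For $n$ large, $f^n(\gamma)$ contains a long nearly-horizontal arc of $L^u_+$ inside $V$ whose portion on $\{x<0\}$ $C^0$-approximates any prescribed compact piece of the local $L^u_-$-axis from the upper half-plane. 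Symmetrically, iterating backward a small arc of $L^s_+$ through $q$ produces long nearly-vertical arcs of $L^s_+$ whose portion on $\{y<0\}$ $C^0$-approximates compact pieces of $L^s_-$ from the right half-plane.

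Finally I transfer the crossing. The approximating long arc of $L^u_+$ (close to $L^u_-$) and the approximating long arc of $L^s_+$ (close to $L^s_-$) are both iterates of arcs through the single homoclinic crossing $q$, and by invariance of the dynamics they continue to meet in a topological crossing at an iterated point. Since in the linearizing coordinates both arcs are explicit graphs over coordinate axes, their intersection point is computable, and pushing the configuration forward one more iterate stretches the $L^u_+$ arc across the $y$-axis while stretching the $L^s_+$ arc across the $x$-axis, producing a topological crossing of $L^u_+$ with $L^s_-$ itself inside sector $S_4$. The main obstacle is this last transfer in the continuous setting, because $C^0$-approximation alone does not in general propagate the crossing property between pairs of curves; the resolution is that the exactness of the linear normal form lets one describe the iterated arcs as explicit graphs, so the intersection and its crossing character can be computed directly rather than argued by abstract perturbation, which is precisely what the topological notion of crossing needs.
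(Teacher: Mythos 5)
Your approach differs fundamentally from the paper's, and it has a serious gap at its core step.

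The paper's proof is global and uses two standing hypotheses of item~\eqref{t82} that your argument never invokes: that $f$ is area preserving, and that all four branches of $p$ \emph{have the same closure in $S$}. Concretely, the paper takes the crossing point $q$, forms the closed curve $\ga$ from the two segments of the crossing branches between $q$ and $f(q)$, sets $\Ga=\ga\cup f(\ga)\cup f^2(\ga)$, observes that $S-\Ga$ is disconnected with $p$ in one component $A$, applies Poincar\'e recurrence to any other component $B$ to force some piece of $\Ga$ (hence of the invariant manifolds) into $B$, and then uses the equal-closure hypothesis to conclude every branch meets $B$ as well as $A$, hence must intersect $\Ga$ as it passes from one component to the other. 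You try instead to reduce everything to a local picture near $p$ via a $C^0$ analogue of the $\lambda$-lemma, which cannot reach the conclusion: without the equal-closure hypothesis the statement is not even true, since a transverse homoclinic for the pair $(L^u_+,L^s_+)$ carries no information about how $L^u_-$ and $L^s_-$ are situated in $S$.

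The specific step that fails is the ``transfer.'' You write that ``the exactness of the linear normal form lets one describe the iterated arcs as explicit graphs, so the intersection and its crossing character can be computed directly.'' This is not true. The linearizing coordinates control only the dynamics on $V$, not the shape of the arc $f^n(\ga)\subset L^u_+$ that re-enters $V$ near $f^n(q)$: that arc is a piece of the globally defined invariant manifold, and in the $C^0$ setting it can be an arbitrary continuous curve through $(0,y_n)$ with points on both sides of the $y$-axis --- nothing makes it a graph. Since, as you yourself note, $C^0$-approximation does not propagate topological crossings, the argument stalls precisely at the point it was meant to resolve. Moreover, even if one granted $C^1$-control, the geometric picture you describe has pieces of $L^u_+$ accumulating on $L^u_-$ from the sector $S_2$ (above the negative $x$-axis), and pieces of $L^s_+$ accumulating on $L^s_-$ from $S_4$; it is not explained why these arcs should meet in a crossing, let alone why $L^u_+$ should cross $L^s_-$ (which bounds $S_3$ and $S_4$, disjoint from $S_2$). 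A global topological argument, as in the paper, is really needed here.
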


\begin{proof}
Let $q$ be a homoclinic point of $p$ and let $\ga$ be the closed curve obtained 
by the union of the segments of the branches that contain $q$ from $q$ to $f(q)$.
If $\Ga=\ga\cup f(\ga)\cup f^2(\ga)$, then $S-\Ga$ is disconnected.
Let $A$ be the component of $S-\Ga$ that contains $p$ and its local branches 
and let $B$ be any other component.
Since $f$ is area preserving, there exists $n$ such that 
$f^nB\cap int_S B\ne \emptyset$.
Therefore $int_SB$ contains points of $fr_S(f^nB)$
which is made of invariant manifolds.
But the four branches have the same closure in $S$,
implying that each one intersects different components of $S-\Ga$.
It follows that each branch intersects $fr_S(S-\Ga)=\Ga$.
\end{proof}

%
%

\subsection{Proof of item~\eqref{t83}: the case of  boundary fixed points. }
\label{ssp84}
\quad

\noindent{\it Case 1:  $S$ has genus 0.}\quad

Suppose now that $S$ has genus zero.

Let $p_1$ and $p_2$ be any fixed points of $f$ in $C$, let $L_1$ and $L_2$ be their branches 
contained in $S-\partial S$ and assume that $L_1$ is unstable and $L_2$ is stable.
Since $cl_SL_1=cl_SL_2$ and $L_2$ accumulates on both sectors of $p_2$ adjacent to itself,
we have that $L_1$ accumulates on both sectors of $p_2$ adjacent to $L_2$.

Consider a system of coordinates in a neighborhood
$V$ of $p_2$ with $p_2$ at the origin  in which $f(x,y)=(\la^{-1}x,\la y)$
with $0<\la<1$, $y\ge 0$. 
We have that $L_2$ is the branch that contains 
$\{(x,y)\in V : x=0,\,y>0\}$. Consider the sector 
$$
\Si=\{(x,y)\in V : 0<xy\le\la^2,\,
0< x\le 1, \, 0<y\le1\}
$$ 
of $p_2$ and the sets $En(\Si)$ and $Ex(\Si)$ as defined in~\eqref{ens}. The branch
$L_1$ intersects $\Si$ and if we move along $L_1$ starting from $p_1$
the first point $q_1$ of $L_1$ to intersect $\Si$ belongs to $En(\Si)$.
Join $q_1$ to $p_2$ with a line segment $\ga_1\subset int_S\Si$ and
consider $\Ga_1=L_1[p_1,q_1]\cup \ga_1\cup C_{21}$, where $C_{21}$ is the 
component of $C-\{p_1,p_2\}$ that contains the branch of $p_2$
that contains $\{(x,y)\in V : x>0,\,y=0\}$. $\Ga_1$ is a simple closed curve.

 The branch $L_2$ intersects $\Si$ and as we move along $L_2$ starting from $p_2$,
 the first point $q_2$ of $L_2$ to intersect $\Si$ belongs to $Ex(\Si)$.
 Join $q_2$ to $p_2$ with a line segment $\ga_2\subset int_S\Si$ and consider
 the simple closed curve $\Ga_2=L_2[p_2,q_2]\cup\ga_2$.
 $\Ga_1$ and $\Ga_2$ intersect at $p_2$. The local branch of $L_2$ and the curve $\ga_2$
 belong to different components of $S-\Ga_1$.
 From this we conclude that $\Ga_1$ and $\Ga_2$ 
 must intersect again at a point $q$ different from $p_2$.
 Since $\Ga_1\cap \ga_2=\{p_2\}$ and $\Ga_2\cap(\ga_1\cup C_{21})=\{p_2\}$,
 we have that $q\in L_1[p_1,q_1]\cap L_2[p_2,q_2]$.
 This proves the case of genus zero.
 
\noindent{\it Case 2: $S$ has genus 1 and $C$ has at least 4 fixed points:}\quad
 
 Suppose now that $S$ has genus one.
 
 Let $p_1$, $p_2$, $p_3$, $p_4$ be fixed points of $f$ such that $p_i$
 and $p_{i+1}$ are end points of connections contained in $C$ for $1\le i\le 3$.
 Let $L_i$ be the branch of each $p_i$ contained in $S-\partial S$ and assume
 that $L_1$ and $L_3$ are stable and $L_2$ and $L_4$ are unstable.
By hypothesis $cl_SL_i=cl_SL_j$ for any $i,j$.
 Let $\pi:E\to S$ be a 3-fold covering of $S$ defined after~\eqref{3fold}
 and let $F:E\to E$ be a lifting of $f$.

Let $\pi^{-1}(C)= C^1\cup C^2\cup C^3$ be the lifting of 
the boundary component; $\pi^{-1}(p_i)=\{p^1_i,p^2_i,p^3_i\}$,
$1\le i\le 4$, the lifting of the fixed points $p_i$ and
$\pi^{-1}(L_i)=\{L^1_i,L^2_i,L^3_i\}$, $1\le i\le 4$, the lifting of the
corresponding branches.

To show that any pair $(L_i,L_j)$ of stable and unstable branches intersect,
it is enough to show that this happens for one pair.
This follows from the fact that they all have the same closure in $S$ 
and an argument similar to Lemma~\ref{Lall}. 
To show that a pair $(L_i,L_j)$ of stable and unstable branches intersect,
it is enough to show that this happens for a pair
$(L^m_i,L^n_j)$ of stable and unstable branches in $E$.

We are going to assume that this does not happen and derive a contradiction.

We would like to construct simple closed curves such as $\Ga^u_i$ and $\Ga^s_j$
in the proof of item~\eqref{t82} in \S~\ref{ssp82}, with the same intersecting properties.
The same proof as in Lemma~\ref{l47} shows that for each $n\in\{1,2,3\}$  the branches 
$L_1^n$, $L_2^n$, $L_3^n$, $L_4^n$ have the same closure $K^n$ in $E$
and that there are two alternatives:

\begin{enumerate}
\iitem\label{t641}
There exist a pair $(m,n)$ with $m\ne n$ and a pair $(i,j)$ such that a branch
$L^m_i$ accumulates on a sector of $p^n_j$. In this case we have that all branches
$L^m_i$ accumulate on all sectors of the points $p^n_j$ and $K^1=K^2=K^3$.
This follows from the action of deck transformations on fibers.
\iitem\label{t642}
For every $i$,~$j$ and $m\ne n$, the branches $L^m_i$ do not accumulate on
any of the sectors of points $p^n_j$. In this case there exist neighborhoods
$V^n_i$ of $p^n_i$ such that $K^m\cap V^n_i=\emptyset$ if $m\ne n$.
\end{enumerate}

Suppose that alternative~\eqref{t641} holds.

For $n=1,2,3$ let $C^n_{14}$ be the closed arc of $C^n$ that has $p^n_1$ and $p^n_4$
as end points and contains $p^n_2,\, p^n_3$ and the three connections between them.
For $1\le i<j\le 4$, let $C^n_{ij}$ be the closed arc contained in $C^n_{14}$ that has $p^n_i$
and $p^n_j$ as end points.

Let $\Si$ be the sector of $p^1_1$ that has $L^1_1$ and the connection contained
in $C_{12}^1$ as adjacent branches:
$$
\Si=\{(x,y)\in V : 0<xy\le \la^2,\, 
0<x\le 1,\, 0<y\le 1\}
$$
in local linearizing coordinates.

All branches $L^n_i$ intersect $\Si$. Let $q^n_i$ be the first point of $L^n_i$
to intersect $\Si$. If $i\in\{1,3\}$ then $L^n_i$ is stable and $q^n_i\in Ex(\Si)$. 
If $i\in\{2,4\}$ then $L^n_i$ is unstable and $q^n_i\in En(\Si)$.
Let $L^n_i(p^n_i,q^n_i)$ be the open arc inside $L^n_i$ between 
$p^n_i$ and $q^n_i$.

If we define 
$\Ga^n_{13}$ as $L^n_1(p^n_1,q^n_1)\cup C^n_{13}\cup L^n_3(p^n_3,q^n_3)\cup
\ga^n_{13}$ where $\ga^n_{13}$ is a small arc from $q^n_1$ to $q^n_3$
contained in $Ex(\Si)$, then $\Ga^n_{13}$ is a simple closed curve made of
parts of stable branches, an arc of $C^n$ and a small arc contained in $Ex(\Si)$.

Similarly, let 
$\Ga^n_{24}=L^n_2(p^n_2,q^n_2)\cup C^n_{24}\cup L^n_4(p^n_4,q^n_4)\cup \ga^n_{24}$,
where $\ga^n_{24}$ is an arc from $q^n_2$ to $q^n_4$ contained in $En(\Si)$.
Then $\Ga^n_{24}$ is a simple closed curve made of parts of unstable branches,
an arc of $C^n$ and a small arc contained in $En(\Si)$.

From the assumption that different branches $L^n_i$ and $L^m_j$ do not intersect,
we have that $\Ga^n_{13}\cap \Ga^m_{24}=\emptyset$ if $n\ne m$ and 
$\Ga^n_{13}\cap \Ga^n_{24}=C^n_{23}$ for $n=1,2,3$.

We have that $E\subset \T^2$ and if we look at the curves $\Ga^n_{ij}$ as subsets of $\T^2$,
then $\{\Ga^1_{13},\Ga^2_{13},\Ga^3_{13}\}$ and $\{\Ga^1_{24},\Ga^2_{24},\Ga^3_{24}\}$
are linearly dependent subsets of $H_1(\T^2)$. We have that  $\#(\Ga^n_{13},\Ga^m_{24})=0$
if $n\ne m$.

We claim that $|\#(\Ga^n_{13},\Ga^n_{24})|=1$ for $n=1,2,3$.

In order to see why this happens we are going to homotopicaly modify
each $\Ga^n_{24}$ so that $\Ga^n_{13}$ and $\Ga^n_{24}$ intersect 
only at $p^n_3$ and this point is a crossing, meaning that there exists a neighborhood
$V$ of $p^n_3$ such that the complement in $V$ of each curve has two
components, with the other curve intersecting both.

Each boundary component $C^n$ has a collar neighborhood $W$ homeomorphic
to $(\re/5\Z)\times [0,1[$ where we take coordinates $(\th,r)$. 
Points in $C^n$ have coordinate $r=0$. The following refers to the coordinates $(\th,r)$.

We may assume that $p^n_i=(i,0)$. Therefore $\Ga^n_{13}\cap \Ga^n_{24}=[2,3]\times\{0\}$.

For any $\de_1>0$ there exists $\de_2>0$ such that 
$R=(]2-\de_1,3-\de_1[\times]0,\de_2[)$ is disjoint from $\Ga^n_{13}$ 
and contains the local branch of $L^n_2$. Let $q_0\in R$ be a point of
the local branch of $L^n_2$ inside $R$. It follows that the straight line
closed segment $\a_1$ from $q_0$ to $q_1:=(3-\de_1,0)$ is contained 
in $R\cup\{q_1\}$. Let $\a_0$ be the closed segment inside $\Ga^n_{24}$ from $q_0$
to $q_1$ that contains $p^n_2$. The arc $\a_0$ is the union of
a segment of the local branch of $L^n_2$ with $[2,3-\de_1]\times\{0\}$.
Let $\a:[0,1]^2\to S$ be the straight line path homotopy 
$\a(s,t)=(1-t)\,\a_0(s)+t\,\a_1(s)$ from $\a_0$ to $\a_1$.

Now let us consider a neighborhood $V$ of $p^n_3$ 
and coordinates in $V$ with $p^n_3=(0,0)$ and $f(x,y)=(\la^{-1}x,\la y)$, $y\ge 0$
and $\la <1$. We may assume that $V$ is an open ball
with center $(0,0)$ in these coordinates. If $\de_1$ is small enough,
we have that $q_1\in V$. Therefore for $a<1$ and close enough to $1$, we have that
$\be_a(t):=\a(a,t)\in V$ for all $t\in[0,1]$. Let $\be_1(t)=p^n_3$ and let
$\be:[a,1]\times[0,1]\to S$ be the straight line path homotopy from
$\be_a$ to $\be_1$ given by
$$
\be(s,t)=\tfrac{1-s}{1-a}\,\be_a(t)+\tfrac{s-a}{1-a}\,\be_1(t).
$$

We have that $\a(a,t)=\be(a,t)$ and therefore $\a$ and $\be$ 
define a continuous map
\linebreak
$\ga:[0,1]^2\to S$ by $\ga(s,t)=\a(s,t)$ if $s\le a$
and $\ga(s,t)=\be(s,t)$ if $s\ge a$.

Then $\ga$ is a path homotopy between $\ga_0(s)=\ga(s,0)$ and 
$\ga_1(s)=\ga(s,1)$, where
$$
\ga_0(s) =
\begin{cases}
\a_0(s) &\text{if } s\le a,
\\
\tfrac{1-s}{1-a}\,\a_0(a) + \tfrac{s-a}{1-a}\,p^n_3
&\text{if } s\ge a
\end{cases}
$$
is the arc of $\Ga^n_{24}$ from $q_0$ to $p^n_3$ that contains $p^n_2$, and
$$
\ga_1(s)=
\begin{cases}
\a_1(s) &\text{if } s\le a,
\\
\tfrac{1-s}{1-a}\,\a_1(a) + \tfrac{s-a}{1-a}\,p^n_3
&\text{if } s\ge a.
\end{cases}
$$

We have that $\ga_1(0)=q_0$, $\ga_1(1)=p^n_3$ and since $\a_1$ is a straight line
in the coordinates of $R$ we have that $\ga_1([0,a])\subset R\subset S-\Ga^n_{13}$.
We also have that $\ga_1(1)=p^n_3$ and that $\ga_1(a)$ belongs to the sector 
$\Si=\{(x,y)\in V : x<0,\,y>0\}$.
Since $\ga_1$ is a straight line in the coordinates of $V$ we have that 
$\ga_1([a,1[)\subset \Si\subset S-\Ga^n_{13}$.

Therefore $\ga_1([0,1])\cap \Ga^n_{13}=\{p^n_3\}$. 
Let us replace the arc of $\Ga^n_{24}$ from $q_0$ to $p^n_3$
that contains $p^n_2$ by $\ga_1([0,1])$ to obtain a new simple closed curve $\La$.

Then $\La$ and $\Ga^n_{24}$ are homotopic and $\La\cap \Ga^n_{13}=\{p^n_3\}$.

In the coordinates given by $V$ we have that 
$$
\Ga^n_{13}\cap V=\{(x,y)\in V : x\le 0,\, y=0\}\cup\{(x,y)\in V : x=0,\, y\ge 0\}
$$
and the complement of $\Ga^n_{13}$ in $V$ has two components
$\Si =\{(x,y)\in V : x<0,\, y>0\}$ and 
$\Si'=\{(x,y)\in V : x>0,\, y\ge 0\}$.

On the other hand $\La\cap V$ is the disjoint union of $\ga_1([0,1[)\cap\Si$,
$\{p^n_3\}$ and $\{(x,y)\in V : x>0,\, y=0\}$.
Therefore $\La$ intersects both $\Si$ and $\Si'$.

From this we conclude that $\#(\La,\Ga^n_{13})$ is $1$ or $-1$ which proves the claim.

Now the contradiction comes easily from the fact that 
$\Sigma_{n=1}^3 a_n\Ga^n_{13}=0$, with $a_m\ne 0$ for some $m$;
implying that 
$$
0=\#\big(\tsum\nolimits_{n=1}^3 a_n\Ga^n_{13},\Ga^m_{24}\big)=a_m\,\#(\Ga^m_{13},\Ga^m_{24}).
$$
A contradiction.

This proves item~\eqref{t83} 
in the case $S$ has genus one and alternative \eqref{t641}
holds.

When alternative~\eqref{t642} holds, we consider a sector $\Si_n$ of $p^n_1$ contained in $V^n_1$
for $1\le n\le 3$. For each $n$ the four branches $L^n_i$ intersect $\Si_n$ and are
disjoint from $\Si_m$ if $n\ne m$. Let $q^n_i$ be the first point of $L^n_i$ to intersect $\Si_n$.

For each $n=1,2,3$ we define a pair of simple closed curves 
$\{\Ga^n_{13},\Ga^n_{24}\}$ as in alternative~\eqref{t641}. 
The only difference is that the arcs $\ga^n_{13}$ and $\ga^n_{24}$ will be
contained in different sectors $\Si_n$. If there are no homoclinic points then 
using \eqref{t641} we 
still have that $\Ga^n_{13}\cap \Ga^m_{24}=\emptyset$ if $n\ne m$ and
$\Ga^n_{13}\cap \Ga^n_{24}=C^n_{23}$ for $n=1,2,3$,
and a contradiction comes up when we compute their oriented intersection number.
This ends the proof of Theorem~\ref{T8}.

\qed

The following corollary extends Theorem~\ref{T6} to periodic points.
It follows immediately by taking powers of $f$ and adding the hypothesis 
of elliptic periodic being irrationally elliptic.

\begin{Corollary}\label{C1}
\quad

Let $S$ be a compact connected orientable surface with boundary 
provided with a finite measure $\mu$ 
which is positive on open sets and let
$f:S\to S$ be an orientation preserving and  area preserving
homeomorphism of $S$.
\begin{enumerate}
\iitem\label{c11}
Suppose that $L$ is a (periodic) 
branch of $f$ and that all periodic points of $f$ contained in $cl_SL$ 
are of saddle type or irrationally elliptic.
Then either $L$ is a connection or $L$ accumulates on
both adjacent sectors.
In the later alternative $L\subset \om(L)$.

\iitem\label{c12}
Let $p\in S-\partial S$ be a periodic point of $f$ of saddle type and let
$L_1$ and $L_2$ be adjacent branches of $p$ that are not connections.
If all the periodic points of $f$ contained in $cl_S(L_1\cup L_2)$ are of saddle
type or irrationally elliptic, then $cl_SL_1=cl_SL_2$.

\iitem\label{c13}
Suppose that $p\in S-\partial S$ is a periodic point of $f$ of saddle type.
Assume that all the periodic points contained in $cl_S(W^u_p\cup W^s_p)$ 
are of saddle type or irrationally elliptic and $p$ has no connections.
Then the branches of $p$ have the same closure
and each branch of $p$ accumulates on all the sectors of $p$.

If in addition $S$ has genus zero or one, then the four 
branches of $p$ have homoclinic points.

\iitem\label{c14}
Let $C$ be a connected component of $\partial S$ and suppose that   
all the periodic points $p_1,\ldots,p_{2n}$ of $f$ in $C$ are of saddle type.
Let $L_i$ be the branch of $p_i$ contained  in $S-\partial S$.
Assume that for every $i$ all the periodic points of $f$ contained 
in $cl_SL_i$ are of saddle type or irrationally elliptic and that $L_i$ is not a connection.
Then for every pair $(i,j)$  the branch $L_i$ accumulates on all the sectors  $p_j$ and
 $cl_SL_i=cl_SL_j$.

If in addition $S$ has genus zero  then any pair $(L_i,L_j)$
of stable and unstable branches intersect.
The same happens if the genus of $S$
is 1 provided that there are at least $4$ periodic points in $C$.

\end{enumerate}

\end{Corollary}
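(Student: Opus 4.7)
The plan is to reduce Corollary~\ref{C1} to Theorems~\ref{T7} and~\ref{T8} by replacing $f$ with a sufficiently high iterate $f^N$.

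First I would choose $N\in\na$ as a common multiple of the $f$-periods of all the periodic objects named in the statement, so that $f^N$ fixes pointwise each such object: the point $p$ and the points $p_i$, together with the branch $L$ of item~\eqref{c11} and the branches $L_1,L_2$ or $L_i$ of items~\eqref{c12}--\eqref{c14}. Since the branches of a saddle periodic point are either invariant or of period $2$ under $f^\tau$, such an $N$ always exists.

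The key observation, and the only real content of the argument, is that the hypothesis ``saddle or irrationally elliptic'' for $f$ implies non-degeneracy for $f^N$. Indeed, if $q$ is a periodic point of $f$ of period $\tau$ dividing $N$, then $d(f^N)_q=(df^\tau_q)^{N/\tau}$. In the saddle case this stays hyperbolic. In the irrationally elliptic case, if $df^\tau_q$ is a rotation by angle $\theta$ with $\theta/2\pi$ irrational, then $d(f^N)_q$ is a rotation by $(N/\tau)\theta$, whose ratio with $2\pi$ is a nonzero rational multiple of $\theta/2\pi$ and hence still irrational. In particular $d(f^N)_q\ne \text{Id}$, so $q$ is a non-degenerate (indeed irrationally elliptic) fixed point of $f^N$. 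Consequently every fixed point of $f^N$ lying in the closures $cl_SL$, $cl_S(L_1\cup L_2)$, $cl_S(W^u_p\cup W^s_p)$, $cl_S L_i$ relevant to each item is non-degenerate.

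With this in hand, each item follows by applying the matching item of Theorem~\ref{T7} (for accumulation and equality of closures) or Theorem~\ref{T8} (for homoclinic and heteroclinic intersections) to $f^N$ in place of $f$. The translation back to $f$ is automatic: a connection of $f^N$ is a branch contained in the intersection of invariant manifolds of two $f^N$-fixed points, hence of two $f$-periodic points, so it is an $f$-connection; the stable and unstable manifolds $W^{s,u}_p$ used in Corollary~\ref{C1} (defined via $f^\tau$) coincide with those used in Theorems~\ref{T7} and~\ref{T8} for $f^N$ whenever $\tau\mid N$; and equality of closures, existence of homoclinic points and pairwise intersection of stable/unstable branches are preserved verbatim when passing between $f$ and $f^N$. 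I do not anticipate any substantive obstacle beyond this bookkeeping and the short arithmetic above showing that irrational ellipticity survives taking powers.
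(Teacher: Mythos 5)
Your proposal is correct and follows exactly the route the paper intends: the paper's entire proof of Corollary~\ref{C1} is the one sentence preceding it (``It follows immediately by taking powers of $f$ and adding the hypothesis of elliptic periodic being irrationally elliptic''), and you supply precisely the bookkeeping that sentence compresses — choosing $N$ so that $f^N$ fixes all the named objects, observing that a nonzero integer multiple of an irrational remains irrational so an irrationally elliptic $f$-periodic point stays irrationally elliptic (hence non-degenerate) as an $f^N$-fixed point, and checking that connections, invariant manifolds, closures and intersections are unchanged when passing between $f$ and $f^N$. The only thing you leave implicit, as does the paper, is the extra clause in items~\eqref{c13} and~\eqref{c14} that each branch accumulates on \emph{all} the sectors of the relevant saddle(s): Theorem~\ref{T7} directly gives only accumulation on the two \emph{adjacent} sectors plus equality of closures, and one must combine these via Lemma~\ref{L10}\eqref{l10b}/Lemma~\ref{L12} (if $L_1$ accumulates on a sector adjacent to $L_2$ then $L_2\subset cl_S L_1$, so points of $L_2$ deep in the far sectors are limits of points of $L_1$ in those same sectors) to get accumulation on all four. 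This is a short deduction, needed anyway to verify the hypotheses of Theorem~\ref{T8}\eqref{t82}--\eqref{t83}, and it would be worth one sentence to make explicit; otherwise the proposal is complete and matches the paper.
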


The version of the Corollary for maps defined on an open subset is the following:

\begin{Corollary}\quad

Let $S$ be a compact connected orientable surface with boundary. 
Let $S_0\subset S$ be a submanifold with compact boundary 
$\partial S_0\subset \partial S$
 and let $f,f^{-1}:S_0\to S$
be an orientation preserving and area preserving homeomorphism of $S_0$ onto
open subsets $fS_0$, $f^{-1}S_0$ of $S$ with $f(\partial S_0)\subset \partial S_0$.

\begin{enumerate}
\iitem 
Let $p\in S_0-\partial S$ be a periodic point of $f$ of saddle type.
Assume that the branches of $p$ have closure
included in $S_0$.
Assume also that each branch of $p$ accumulates on both
of its adjacent sectors and that all the branches of $p$
have the same closure in $S$.
If in addition $S$ has genus 0 or 1,
then the four branches of $p$ have homoclinic points.

\iitem
Let $C$ be a connected component of $\partial S_0$ and suppose that 
all the periodic points $p_1,\ldots,p_{2n}$ of $f$ in $C$
are of saddle type.
Let $L_i$ be the branch of $p_i$ contained in
$S-\partial S$.
Assume that for every $i$, $L_i$ is not a connection and
$cl_SL_i=cl_SL_j\subset S_0$ for every pair $(i,j)$.

If in addition $S$ has genus 0, then every pair $L_i$, $L_j$ of stable and 
unstable branches intersect. 
The same happens if the genus of $S$
is 1 provided that there are at least $4$ periodic points in $C$.
\end{enumerate}

\end{Corollary}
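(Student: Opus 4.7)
The plan is to reduce both items to the corresponding parts of Theorem~\ref{T8} by passing to a suitable iterate of $f$. For item~(1), let $\tau$ be the period of $p$ and set $N=2\tau$. Under $f^N$ the point $p$ is a fixed point of saddle type (that is how saddle type was defined for a periodic point), and each of the four branches of $p$, which has period $1$ or $2$ under $f^\tau$, is invariant under $f^N$. The map $f^N,(f^N)^{-1}:S_0\to S$ is still orientation- and area-preserving with $f^N(\partial S_0)\subset\partial S_0$. The remaining hypotheses of Theorem~\ref{T8}(\ref{t82})---that the branches have closure in $S_0$, each accumulates on both of its adjacent sectors, and all four have the same closure in $S$---depend only on the point sets $L_j$ and transfer verbatim to $f^N$. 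Theorem~\ref{T8}(\ref{t82}) then applies and produces homoclinic points on each of the four branches.

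For item~(2), let $\tau$ be the least common multiple of the periods of $p_1,\ldots,p_{2n}$ and again set $N=2\tau$. Each $p_i$ becomes a saddle fixed point of $f^N$ on $\partial S_0$ and each $L_i$ an invariant branch of $f^N$; the hypotheses ``$L_i$ is not a connection'' and ``$cl_SL_i=cl_SL_j\subset S_0$'' persist. To apply Theorem~\ref{T8}(\ref{t83}) I must still verify that every $L_i$ accumulates on both of its adjacent sectors. To obtain this without the non-degeneracy assumption present in Corollary~\ref{C1}(\ref{c14}), I would adapt the dichotomy ``connection or accumulates on both adjacent sectors'' of Theorem~\ref{T7}(\ref{t71}) to the partially defined setting: if $L_i$ failed to accumulate on an adjacent sector $\Sigma$, the component $U$ of $S_0\setminus cl_SL_i$ containing $\Sigma$ would be a periodic invariant domain with finitely many ideal boundary points and a fixed prime end $e$ with $X(e)=\{p_i\}$; the prime-ends argument of \S\ref{ss32} (relying on Theorem~\ref{T2}(\ref{t21})) would then force a branch of $p_i$ adjacent to $\Sigma$ to be a connection contained in $fr_SU\subset cl_SL_i$, and that branch must be $L_i$ itself, contradicting the hypothesis. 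Once the accumulation is in hand, Theorem~\ref{T8}(\ref{t83}) delivers the required intersection of each pair of stable and unstable branches.

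The main obstacle is exactly this derivation of ``$L_i$ accumulates on both adjacent sectors'' in item~(2): Corollary~\ref{C1}(\ref{c14}) obtained it from Theorem~\ref{T7}(\ref{t71}) using the non-degeneracy of all fixed points contained in $cl_SL_i$, a hypothesis that is dropped here. Every place where the proofs of Theorems~\ref{T2} and~\ref{T7} used non-degeneracy enters only through statements of the form ``$L\cap K\ne\emptyset\Rightarrow L\subset K$'' for $K$ compact, connected, invariant (Lemmas~\ref{L9} and~\ref{L10}). Lemma~\ref{acclem2} (Theorem~4.3 of \cite{OC2}) supplies precisely this statement in the partial-map setting without any non-degeneracy assumption, so the prime-ends argument of \S\ref{ss32} carries over verbatim.
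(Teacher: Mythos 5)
Your reduction by passing to $f^{N}$ with $N=2\tau$ is exactly the route the paper intends (it states the corollary without proof after the remark that the previous corollary ``follows immediately by taking powers''), and for item~(1) this is essentially complete; the only thing you leave unsaid is the ``not a connection'' hypothesis of Theorem~\ref{T8}\eqref{t82}, which does follow: a homoclinic connection would have one-point limit set, incompatible with the four branches all having the same closure and each accumulating on its adjacent sectors.

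For item~(2), however, your attempt to supply the missing accumulation hypothesis contains a genuine gap, and the gap lies precisely in the sentence you use to justify it. You assert that in the proofs of Theorems~\ref{T2} and~\ref{T7} ``non-degeneracy enters only through statements of the form $L\cap K\ne\emptyset\Rightarrow L\subset K$ (Lemmas~\ref{L9} and~\ref{L10}).'' This is false: Lemmas~\ref{L9} and~\ref{L10} carry no non-degeneracy hypothesis at all. Non-degeneracy is used in the proof of Theorem~\ref{T2}\eqref{t21} for two quite different purposes: (a) to conclude that the fixed points on $fr_SU$ are isolated, so that the principal set $X(e)$ of a fixed prime end reduces to a single point (this is where $e$ being accessible comes from, see~\eqref{accessible}); and (b) to conclude that the principal fixed points $p_1=\phi_1(e_1)$, $p_2=\phi_2(e_2)$ bounding the maximal prime-end arc $]e_1,e_2[$ in \S\ref{ss32} are saddles (non-degenerate $+$ ``not elliptic'' $\Rightarrow$ saddle), which is exactly what makes Lemma~\ref{L11} and the sector analysis applicable at those endpoints. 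The corollary's hypotheses say nothing about fixed points in $cl_SL_i$ other than the boundary saddles $p_i$; if a degenerate fixed point sits in $fr_SU\subset cl_SL_i$, the prime-end argument you invoke breaks at step (b), and replacing Lemma~\ref{L9} by Lemma~\ref{acclem2} does nothing to repair it. A second, more structural problem is that the prime-end machinery of \S\ref{S2}--\ref{S3} (Cartwright--Littlewood, Lemmas~\ref{L3} and~\ref{L11}, the whole of \S\ref{ss32}) is developed in this paper only for globally defined homeomorphisms $f\colon S\to S$; citing only the accumulation lemma from~\cite{OC2} does not transport that machinery to the partial-map setting, and the domain $U\subset S_0\smallsetminus cl_SL_i$ you introduce need not even be $f$-invariant since $f(S_0)\ne S_0$ in general. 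So your argument for the accumulation does not go through as written; if one reads the corollary with the accumulation hypothesis of Theorem~\ref{T8}\eqref{t83} included (which is how the immediate ``take powers'' reduction requires it), item~(2) follows at once, and no such detour is needed.
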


\section{The standard map.}
\label{s5}

The standard map is a one parameter family of area preserving
diffeomorphisms of the two dimensional torus $T^2=\re^2/\Z^2$ given by
$$
f_\la(x,y)=\big(x+y+\tfrac \la{2\pi}\sin(2\pi x),\;
y+\tfrac\la{2\pi}\sin(2\pi x)\big), \quad
\la\in \re.
$$
The map $f_0$ is just a twist. 
The map $\vr(x,y)=(x+\tfrac 12,y)$ 
is a conjugacy between $f_\la$ and $f_{-\la}$,
so we consider only parameters $\la>0$.

For $\la\ne 0$ there are two fixed points, $p=(0,0)$
and $q=(\tfrac 12,0)$. For $\la>0$,  $p$ is always a saddle
with positive eigenvalues, and  $q$ is elliptic if $0<\la< 4$ 
and a saddle with negative eigenvalues if $\la>4$.

\begin{Theorem}\label{T9}
If $\la \ne 4$, $\la>0$,  then the four branches of $p$ have homoclinic points.
\end{Theorem}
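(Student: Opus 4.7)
The plan is to verify the hypotheses of item~\eqref{t63} of Theorem~\ref{T6} for $f_\lambda$ on $S = \T^2$ (a closed, connected, orientable surface of genus $1$, with empty boundary); the conclusion of that item is precisely the conclusion of Theorem~\ref{T9}.

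First I would identify the fixed points and their type. Solving $f_\lambda(x,y) = (x,y)$ modulo $\Z^2$ yields exactly $p = (0,0)$ and $q = (\tfrac{1}{2}, 0)$. Since $\det df_\lambda \equiv 1$ and $\mathrm{tr}\, df_\lambda(p) = 2 + \lambda > 2$ for $\lambda > 0$, the point $p$ is a hyperbolic saddle with both eigenvalues positive; in particular its four branches are individually invariant. At $q$, $\mathrm{tr}\, df_\lambda(q) = 2 - \lambda$, so for $\lambda \ne 4$ the point $q$ is non-degenerate: elliptic with $df_\lambda(q)$ linearly conjugate to a rotation by an angle $\alpha \ne 0$ (since $\cos\alpha = 1 - \lambda/2 \ne 1$) when $0 < \lambda < 4$, and a hyperbolic saddle with negative eigenvalues when $\lambda > 4$. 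Consequently every fixed point of $f_\lambda$ in $cl_{\T^2}(W^u_p \cup W^s_p) \subset \{p,q\}$ is non-degenerate.

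The main obstacle is to rule out that any branch of $p$ is a connection. Suppose for contradiction that some branch $L$ is. For $0 < \lambda < 4$ the elliptic point $q$ has no invariant manifolds, so $L$ must be a homoclinic loop at $p$; for $\lambda > 4$ one must further exclude heteroclinic cycles between $p$ and $q$. Closing $L$ with the fixed points it meets produces an $f_\lambda$-invariant simple closed curve (or graph) whose homology class in $H_1(\T^2, \Z) \cong \Z^2$ is fixed by the induced action of $f_\lambda$, namely the unipotent matrix $\bigl(\begin{smallmatrix} 1 & 1 \\ 0 & 1 \end{smallmatrix}\bigr)$; embeddedness then forces the class to lie in $\{(-1,0), (0,0), (1,0)\}$. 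The commuting symmetry $\sigma(x,y) = (-x,-y)$, which fixes both $p$ and $q$ and exchanges branches within each stable/unstable pair at $p$, forces any such connection to occur in $\sigma$-symmetric pairs, strongly constraining the topological configuration. I would then rule out each surviving case by applying the Lefschetz fixed-point formula to the invariant regions produced by the resulting invariant curves (using $\mathrm{ind}(p) = -1$ and $\mathrm{ind}(q) = +1$, each computed from $\mathrm{sgn}\det(I - df_\lambda)$), combined with Brouwer's plane translation theorem together with Poincar\'e recurrence to exclude invariant disks containing no interior fixed point.

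Once no branch of $p$ is a connection, all the hypotheses of item~\eqref{t63} of Theorem~\ref{T6} are in force, and applied to the genus-$1$ surface $\T^2$ it yields Theorem~\ref{T9}: the four branches of $p$ have homoclinic points. The delicate step is the case analysis ruling out connections; the remaining work is a direct verification of hypotheses.
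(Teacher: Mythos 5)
Your verification of the hypotheses of Theorem~\ref{T6}\eqref{t63} is correct as far as it goes: the identification of the fixed points, the eigenvalue analysis showing $q$ is non-degenerate for $\la>0$, $\la\ne 4$, and the observation that the positivity of the eigenvalues at $p$ makes the four branches individually invariant all match the setup the paper uses. But your plan diverges from the paper's proof exactly where it matters, and the divergence introduces a genuine gap.

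The paper does \emph{not} try to rule out that a branch of $p$ is a connection. Instead it observes, from the symmetry $f(-x,-y)=-f(x,y)$, that the invariant manifolds of $p$ are sent to themselves by $\sigma(x,y)=(-x,-y)$, so if one branch $L$ of $p$ is a connection then so is $-L$. A connection from $p$ to $q$ is excluded by an elementary remark you do not use: when $q$ is elliptic it has no invariant manifolds, and when $q$ is a saddle (for $\la>4$) its eigenvalues are negative, so its branches have period two, whereas the branches of $p$ are invariant --- a branch cannot be both. Hence any connection among the branches of $p$ is a homoclinic loop $L=W^u_i\cap W^s_j$, and then $L$ and $-L$ together \emph{are} the four branches of $p$ and consist entirely of homoclinic points. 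In that case the conclusion of Theorem~\ref{T9} is immediate, with nothing left to contradict. The case distinction is: connections exist, in which case the theorem is trivial; or no connections, in which case Theorem~\ref{T6}\eqref{t63} applies.

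Your approach instead tries to derive a contradiction from the existence of a homoclinic loop, and the argument you sketch does not close. The Lefschetz index data ($\ind(p)=-1$, $\ind(q)=+1$, total $0$) and Brouwer--Poincar\'e handle the null-homotopic case: a lobe of the figure-eight containing no interior fixed point would contradict recurrence. But you correctly note the invariant homology classes of $\left(\begin{smallmatrix}1&1\\0&1\end{smallmatrix}\right)$ include $\pm(1,0)$, and in that non-separating case the figure-eight $L\cup\{p\}\cup(-L)$ has $\chi=-1$, so its complement in $\T^2$ is a single open disk of Euler characteristic $1$. That disk contains $q$, whose index is $+1$, so neither the Lefschetz count nor Brouwer's translation theorem yields a contradiction. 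Nothing in your outline rules this configuration out, and since the theorem is in fact true \emph{with} such a loop (the loop \emph{is} a curve of homoclinic points), a proof that tries to exclude it is aiming at a stronger and possibly false statement. The fix is to drop the exclusion attempt and handle the connection case the way the paper does.
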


\begin{proof}

For $\la \ne 4$ if $L$ is a branch of $p$ then $L$ is invariant and all
fixed points of $f$ contained in $cl_SL$ are non-degenerate.
The identity $f(-x,-y)=-f(x,y)$ implies that the invariant manifolds 
of $p$ are symmetric with respect to $(0,0)$. 
Therefore if $L$ is a connection then so is $-L$.

There can not be a connection between $p$ and $q$. 
This is obvious when $q$ is elliptic, and when $q$ is a 
saddle this can not happen because the branches of $p$
are invariant and those of $q$ have period two.

Therefore if one of the branches of $p$ is a connection,
then this connection equals two branches of $p$.
By the symmetry shown above the invariant manifolds of $p$
are made of two connections of homoclinic points.
So in this case the four branches have homoclinic points. 

If no branch of $p$ is a connection, then by \eqref{t63} of Theorem~\ref{T6},
the four branches of $p$ have homoclinic points.
\newline
\end{proof}

\nocite{Massey}

\nocite{Ol3}


\def\cprime{$'$} \def\cprime{$'$} \def\cprime{$'$} \def\cprime{$'$}
\providecommand{\bysame}{\leavevmode\hbox to3em{\hrulefill}\thinspace}
\providecommand{\MR}{\relax\ifhmode\unskip\space\fi MR }
\providecommand{\MRhref}[2]{%
  \href{http://www.ams.org/mathscinet-getitem?mr=#1}{#2}
}
\providecommand{\href}[2]{#2}

  \end{document}